\theoremstyle{plain}
\newtheorem{theorem}{Theorem}[section]
\newtheorem{lemma}[theorem]{Lemma}
\newtheorem{claim}[theorem]{Claim}
\newtheorem{fact}[theorem]{Fact}
\newtheorem{property}[theorem]{Property}
\newtheorem{proposition}[theorem]{Proposition}
\newtheorem{corollary}[theorem]{Corollary}
\numberwithin{equation}{section}
{\theoremstyle{definition}\newtheorem{remark}[theorem]{Remark}}
\def\cc{\mathbb{C}}
\def\nn{\mathbb{N}}
\def\R{\ensuremath{\mathbb R}}
\def\C{\ensuremath{\mathbb C}}
\def\N{\ensuremath{\mathbb N}}
\newcommand{\ti}[1]{\widetilde{#1}}
\def\ca{{\mathcal{A}}}
\def\cb{{\mathcal{B}}}
\def\cg{{\mathcal{G}}}
\def\cn{{\mathcal {N}}}
\def\cl{{\mathcal{L}}}
\def\cu{{\mathcal {U}}}
\def\cv{{\mathcal {V}}}
\def\cw{{\mathcal {W}}}
\newcommand{\flba}[2]{
\xymatrix@C15pt{#1\ar@{|->}[r]&#2}}
\newcommand{\flcourte}[2]{
\xymatrix@C12pt{#1\ar[r]&#2}}
\newcommand{\spectre}[1]{\sigma\left(#1\right)}
\newcommand{\oc}{H(\cc)}
\newcommand{\vertiii}[1]{{\left\vert\kern-0.25ex\left\vert\kern-0.25ex\left\vert #1 
    \right\vert\kern-0.25ex\right\vert\kern-0.25ex\right\vert}}
\newcommand{\diffp}[2]{\frac{\partial #1}{\partial #2}}
\newcommand{\sot}{\texttt{SOT}{}}
\newcommand{\clmk}{\cl_{(M_{j}),(k_{j})}}
\newcommand{\vspan}[1]{\mathrm{span}\left[#1\right]}
\newcommand{\sote}{\texttt{SOT}\mbox{$^{*}$}}
\newcommand{\bmx}{\mathcal{B}_{M}(X)}
\newcommand{\bx}{\mathcal{B}(X)}
\newcommand{\hct}{\mathrm{HC}(T)}
\newcommand{\entireFns}{H(\C)}  
\newcommand{\abs}[1]{\left| #1 \right|}
\newcommand{\norm}[1]{{\left\|#1\right\|}}
\newcommand{\essSpec}[1]{\sigma_\mathrm{e}\left( #1 \right)}
\newcommand{\Bmsp}[2]{\mathcal{B}_#1(#2)}  		
\newcommand{\Bmx}{\Bmsp{M}{X}}
\newcommand{\sotstar}{\textnormal{\texttt{SOT}}\ensuremath{^*}}
\newcommand{\hc}[1]{HC\left( #1 \right)}  
\DeclareMathOperator{\ran}{ran}	
\DeclareMathOperator{\codim}{codim}	
\title[Typicality of hypercyclic algebras]{Typicality of operators on Fr\'echet algebras admitting a hypercyclic algebra}
\author{William Alexandre, Clifford Gilmore, Sophie Grivaux}
\address[W. Alexandre]{Univ.\ Lille, CNRS, UMR 8524 - Laboratoire Paul
Painlev\'e, F-59000 Lille, France}
\address[C. Gilmore]{Universit\'e Clermont Auvergne, Laboratoire de Mathématiques Blaise Pascal UMR 6620,  CNRS, F-63178 Aubi\`ere, France}
\address[S. Grivaux]{Univ.\ Lille, CNRS, UMR 8524 - Laboratoire Paul
	Painlev\'e, F-59000 Lille, France}
\email{william.alexandre@univ-lille.fr}
\email{clifford.gilmore@uca.fr}
\email{sophie.grivaux@univ-lille.fr}
\thanks{All three authors were supported by the Labex CEMPI (ANR-11-LABX-0007-01). This work began when the second author was a CEMPI Postdoctoral Fellow at the University of Lille. We are grateful to Fr\'ed\'eric Bayart and Quentin Menet for useful comments on a first version of this paper, and especially to Fr\'ed\'eric Bayart for pointing out a missing step in the proof of Theorem 1.2.}
\begin{document}
\begin{abstract}
This paper is devoted to the study of typical properties (in the Baire Category sense) of certain classes of continuous linear operators acting on Fr\'echet algebras, endowed with the topology of pointwise convergence.
Our main results show that within natural Polish spaces of continuous operators acting on the algebra $\oc$ of entire functions on $\cc$, a typical operator supports a hypercyclic algebra. We also investigate the case of the complex Fr\'echet algebras $X=\ell_{p}(\nn)$, $1\le p<+\infty$, or $X=c_{0}(\nn)$ endowed with the coordinatewise  product, and show that whenever $M>1$, a typical operator on $X$ of norm less than or equal to $M$ admits a hypercyclic algebra.
\end{abstract}

\keywords{Hypercyclic operators; hypercyclic algebras; Fr\'echet spaces of holomorphic functions; differentiation operator; eigenvector fields}
\subjclass[2010]{47A16, 46B87, 46E05, 54E52}
\maketitle

\section{Introduction and main results}\label{Section introduction}
Our main aim in this work is to show  that within certain natural classes of continuous linear operators acting on Fr\'echet algebras, endowed with the topology of pointwise convergence, a typical operator possesses 
a hypercyclic algebra. Its set of hypercyclic vectors thus has one of the richest structures for which one can hope.
\par\smallskip
Let us recall some pertinent definitions: given a continuous operator $T$ on a topological vector space $X$, a vector $x$ is said to be \emph{a hypercyclic vector for} $T$ if its orbit 
$\{T^{n}x\;;\;n\geq0\}$ under the action of $T$ is dense in $X$. The set of hypercyclic vectors for $T$ is denoted by $\hct$, and it is dense in $X$ as soon as it is nonempty. Whenever $X$ is a 
second-countable Baire space, $\hct$ is a $G_{\delta }$ set (i.e.\ a countable intersection of open sets), so it is residual in $X$ as soon as it is nonempty. The study of the linear structure of this set 
$\hct$ has been the object of many interesting and deep studies: it is known that whenever $\hct$ is nonempty, the set $\hct\cup\{0\}$ contains a dense linear manifold \cite{Bou}. For certain classes 
of hypercyclic operators $T$,  $\hct\cup\{0\}$ contains a closed infinite-dimensional subspace. See for instance \cite{GLM00} for a characterization in spectral terms of operators acting on complex separable 
Banach spaces with this property. We mention also the work \cite{Men}.
\par\smallskip
When $X$ is a topological algebra, it makes sense to ask whether $\hct\cup\{0\}$ contains a non-trivial subalgebra of $X$. Such an algebra will be called a \emph{hypercyclic algebra}, and whenever it 
 exists, we will say that $T$ \emph{admits a hypercyclic algebra}. This question of the existence of a hypercyclic algebra was first considered by Bayart and Matheron in \cite{BayMat}*{Chapter 8}, and 
independently by Shkarin in \cite{Shk}: they showed that the differentiation operator $D \colon f\mapsto f'$  on the algebra $\oc$ of 
entire functions on the complex plane admits a hypercyclic algebra. On the other hand, the translation operators $T_{a} \colon f\mapsto f(\,\cdot+a)$, $a\neq 0$, acting on $\oc$ do not admit hypercyclic algebras \cite{ACPS}. The study of the existence of hypercyclic algebras has by now developed into a flourishing branch of linear dynamics. See for instance the works \cite{BesCP}, \cite{Bay1},   \cite{BC19}, \cite{BP20}, \cite{BEP20}, \cite{FGE20a} and \cite{BCP} among many other relevant 
references. 
\par\smallskip
Several of these papers deal with the important question of characterizing the entire functions $\phi $ of exponential type such that $\phi (D)$ admits a hypercyclic algebra. The paper 
\cite{Bay1} by Bayart introduces an approach to this problem based on the study of the eigenvectors of the operator $\phi (D)$, which will be of particular importance in this paper.
\par\smallskip 
Our aim is to study the question of the existence of hypercyclic algebras from the Baire Category point of view. Recall that if $(E,\tau )$ is a Polish space (i.e.\ separable and 
completely metrizable), and (P) is a certain property of elements of $E$, we say that (P) \emph{is typical} (or equivalently, that \emph{a typical} $x\in E$ \emph{has property} (P)) if 
the set $\{x\in E\;;\;x\ \textrm{has (P)}\}$ is comeager in $E$. A comeager set  in $E$ is a set which contains a dense $G_{\delta }$ set, i.e.\ which is large in $E$ in the sense of Baire Category. 
Thus given a particular Polish space $(E,\tau )$, it may  be of interest to determine whether some natural properties of elements of 
$E$ are typical or not.
\par\smallskip 
Given a separable Fr\'echet algebra $X$ (i.e.\ a separable completely metrizable topological algebra), our general goal is to define some natural spaces $\mathcal{L}$ of continuous linear 
operators on $X$, which are Polish spaces when endowed with the topology of pointwise convergence on $X$, and to determine whether a typical operator $T$ in such a space $\mathcal{L}$ possesses a 
hypercyclic algebra. When $(X, \norm{\,\cdot\,})$ is a separable Banach algebra, natural spaces $\mathcal{L}$ to consider are the closed balls $\bmx$ consisting of bounded operators $T$ on $X$ 
with $\norm{T}\le M$, with $M>1$ (so as to have a chance that a typical $T\in\bmx $ for the topology of pointwise convergence on $X$ is hypercyclic). In this Banach space setting, we denote by $\bx$ the 
algebra of bounded operators on $X$. The topology on $\bx$ defined as the topology of pointwise convergence on 
$X$ is usually called the \emph{Strong Operator Topology} (\sot): if $T_{\alpha }$ is a net of elements of $\bx$, and if $T\in\bx$,
$T_\alpha \xrightarrow {}T$ %
for the \sot\ if and only if 
$T_\alpha x \xrightarrow {\norm{\,\cdot\,}} Tx$ 
for every $x\in X$. When $X$ is a separable Banach space, $(\bmx,\sot)$ is a Polish space for every $M>0$. The study of typical 
properties of operators $T\in(\bmx,\sot)$  was initiated in the Hilbertian setting by Eisner and M\'{a}trai~\cite{EisMat} and continued in the works \cite{GMM2} and \cite{GMM3} in the case 
where $X=\ell_{p}(\nn)$, $1\le p<+\infty$, or $X=c_{0}(\nn)$. Typical properties of operators for other Polish topologies on closed balls $\bmx$ were also studied in \cite{Eis} and \cite{GriMat}, as 
well as in the monograph \cite{GMM1}.
\par\smallskip
When $X$ is a separable Fr\'echet space, we denote by $\mathcal{L}(X)$ the algebra of continuous linear operators on $X$, and  by \sot\ the topology on $\mathcal{L}(X)$ of pointwise convergence on $X$. Let $(N_{j})_{j\geq1}$ be a sequence of semi-norms on $X$ defining its topology. Given a sequence $(M_{j})_{j\geq1}$ of positive real numbers and a sequence  $(k_{j})_{j\geq1}$ of positive integers, we 
define
\[
\clmk(X)\coloneqq\bigl\{T\in\mathcal{L}(X)\;;\;\forall j\geq1,\ \forall x\in X,\ N_{j}(Tx)\le M_{j}\,N_{k_{j}}(x)\bigr\}\cdot
\]
We will show in Proposition \ref{propositionpolonais} below that $(\clmk(X),\sot)$ is a Polish space. Of course, this space may be very small for certain choices of the sequences 
$(M_{j})_{j\geq1}$ and $(k_{j})_{j\geq1}$, but in general, it makes sense to study \sot-typical properties of elements of $\clmk(X)$. Observe that the space 
$\clmk(X)$ depends on the choice of the sequence of semi-norms $(N_{j})_{j\geq1}$.
\par\smallskip
The main result of this paper deals with the case where $X=\oc$, the space of entire functions on $\cc$ endowed with the topology of uniform convergence on compact sets. This topology on $\oc$ can be defined via the following sequence $(N_{j})_{j\geq1}$ of semi-norms:
\[
\textrm{for every } f\in \oc,\ f(z)=\sum_{m\geq0}a_{m}z^ {m},\ z\in\cc,\quad \textrm{let}\
N_{j}(f)\coloneqq\sum_{m\geq0}|a_{m}|\,j^ {m}.
\]
There are of course plenty of different choices of sequences of semi-norms defining the topology of $\oc$, but for our purposes this choice is the most convenient. To simplify the notation, 
we will write $\clmk\coloneqq\clmk(\oc)$, subordinated to this choice of the sequence $(N_{j})_{j\geq1}$.
\par\medskip
We are now ready to state our main result.
\begin{theorem}\label{maintheorem}
 Let the sequences $(M_{j})_{j\geq1}$ and $(k_{j})_{j\geq1}$ be such that 
 \begin{enumerate}[\normalfont (i)]
  \item for all $\alpha\ge 1$,  $j^\alpha=o(M_j)$ as $j$ tends to infinity;
  \item for all $j\geq 1$, $M_j\geq j+1$;
  \item for all $j\geq 1$, $k_j\geq j+2$;
  \item $M_1<k_1$ and  $k_j\geq k_1$ for all $ j\ge 1$.
 \end{enumerate}
Then a typical operator $T\in(\clmk,\emph{\sot})$ admits a hypercyclic algebra.
\end{theorem}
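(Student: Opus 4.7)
The plan is a Baire-category argument on the product Polish space $(\clmk,\sot)\times\oc$, combined with an eigenvector-field construction in the spirit of Bayart~\cite{Bay1}.

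First, I would set up a countable family of open conditions on the product whose intersection forces the first coordinate to admit a hypercyclic algebra. Let $(V_m)_{m\ge1}$ be a countable basis of non-empty open sets of $\oc$, and let $(P_k)_{k\ge1}$ be a suitable countable family of non-constant polynomials in one variable without constant term (e.g.\ dense in polynomials of each fixed degree, with coefficients in $\Q+i\Q$). For every pair $(k,m)$ set
\[
\cu(k,m)\coloneqq\bigcup_{n\ge1}\bigl\{(T,f)\in\clmk\times\oc\;;\;T^{n}(P_{k}(f))\in V_{m}\bigr\}.
\]
Each $\cu(k,m)$ is open, since for fixed $n,k$ the map $(T,f)\mapsto T^{n}(P_{k}(f))$ is jointly continuous from $(\clmk,\sot)\times\oc$ into $\oc$. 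Set $\mathcal{H}\coloneqq\bigcap_{k,m}\cu(k,m)$. For a well chosen family $(P_k)$, any $(T,f)\in\mathcal{H}$ will have the property that the algebra generated by $f$ in $\oc$ is contained in $\hct\cup\{0\}$, so that $T$ admits a hypercyclic algebra. By Proposition~\ref{propositionpolonais} the product $(\clmk,\sot)\times\oc$ is Polish; once each $\cu(k,m)$ is proved to be dense, $\mathcal{H}$ is comeager and the Kuratowski--Ulam theorem will imply that for a comeager set of $T\in\clmk$ the slice $\{f\;;\;(T,f)\in\mathcal{H}\}$ is comeager in $\oc$, in particular nonempty, which yields the conclusion.

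The technical heart is therefore the density of each $\cu(k,m)$ in the product. Given $(T_{0},f_{0})$, a \sot-neighborhood of $T_{0}$ determined by test vectors $g_{1},\dots,g_{r}\in\oc$ and a tolerance $\varepsilon>0$ in finitely many of the seminorms $N_{j}$, and an $\oc$-neighborhood $W$ of $f_{0}$, I would construct a perturbation $T\in\clmk$ of $T_{0}$ that admits a holomorphic one-parameter family of eigenvectors $\lambda\mapsto e_{\lambda}$, with $e_{\lambda}(z)=e^{\lambda z}$ up to normalization, for $\lambda$ running over a region meeting $\{|\lambda|>1\}$. The pointwise-product structure of $\oc$ yields the multiplicative rule $e_{\lambda}^{j}=e_{j\lambda}$, which makes $T^{n}(P_{k}(f))$ amenable to explicit computation whenever $f=f_{0}+\delta\,e_{\lambda}$ is a small perturbation of $f_{0}$ in the direction of such an eigenvector; a judicious choice of $\lambda$, $\delta$, and $n$ then forces $T^{n}(P_{k}(f))\in V_{m}$.

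The main obstacle, which I expect to absorb the bulk of the argument, is to perform this perturbation inside the constraint class $\clmk$, i.e.\ while preserving every inequality $N_{j}(Tx)\le M_{j}\,N_{k_{j}}(x)$. The four hypotheses on $(M_{j})$ and $(k_{j})$ are calibrated precisely for this: (i) supplies the super-polynomial growth of $M_{j}$ needed to absorb the $N_{j}$-seminorms of the exponentials $e_{\lambda}$; (ii) and (iii) coordinate the indices $j$ and $k_{j}$ so that iterating $T$ does not blow up the seminorm constraints; and (iv) handles the base index $j=1$. The core of the proof should therefore be a careful construction of a finite-rank modification $T-T_{0}$ that creates the required eigenvector field, changes the values $T_{0}g_{i}$ by only a prescribed small amount, and globally respects all the seminorm bounds defining $\clmk$.
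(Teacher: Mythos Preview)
There is a genuine gap in the proposal, located exactly at the step you yourself flag as ``the core of the proof''. You propose to perturb an arbitrary $T_{0}\in\clmk$ by a finite-rank modification so that the exponentials $e_{\lambda}(z)=e^{\lambda z}$ become eigenvectors of the new operator $T$. But this is essentially impossible: if $Te_{\lambda}=c(\lambda)e_{\lambda}$ for $\lambda$ ranging over a set with an accumulation point, then $T$ commutes with every translation and hence must be of the form $\phi(D)$ for an entire function $\phi$ of exponential type. A generic $T_{0}$ is nowhere near a convolution operator, and no finite-rank correction can turn it into one: if $T-T_{0}$ has finite rank then $(T-T_{0})e_{\lambda}$ lies in a fixed finite-dimensional subspace for all $\lambda$, whereas $c(\lambda)e_{\lambda}-T_{0}e_{\lambda}$ sweeps out an infinite-dimensional set as $\lambda$ varies. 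So the construction you sketch cannot be carried out as stated, and the multiplicative identity $e_{\lambda}^{j}=e_{j\lambda}$---which is the whole point of using exponentials---is unavailable for the true eigenvectors of any perturbation you can actually build.

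The paper's route is quite different and substantially more delicate. Rather than a small perturbation of $T_{0}$, one approximates $T_{0}$ in $\sot$ by an operator of the form $A=B+\delta S_{n+1}$, where $S_{n+1}=\frac{1}{(n+1)!}D^{n+1}$ and $B$ is a carefully built \emph{nilpotent} operator satisfying $B=T_{n}BT_{n}$. The eigenvectors of $A$ are \emph{not} the exponentials but functions of the form $\varepsilon_{\beta}=e_{\beta}+(\phi(\beta)I-B)^{-1}Be_{\beta}$ with $\phi(\beta)=\delta\beta^{n+1}/(n+1)!$, and these do \emph{not} satisfy $\varepsilon_{\alpha}\cdot\varepsilon_{\beta}=\varepsilon_{\alpha+\beta}$. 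Multiplicativity is recovered only asymptotically, via the identity $A^{N}e_{\beta}=\phi(\beta)^{N}\varepsilon_{\beta}-(\phi(\beta)I-B)^{-1}B^{N+1}e_{\beta}$: since $B$ is nilpotent the error term vanishes for large $N$, so the exponentials behave as ``near-eigenvectors'' and a Bayart-type argument can be run on them. The density of such nilpotent $B$ inside $\clmk$ (Proposition~\ref{approximationalgebrehypercyclique}) is the hard technical core, occupying several pages of explicit construction; the four hypotheses on $(M_{j})$ and $(k_{j})$ are consumed there, not in any eigenvector-field perturbation of the kind you describe. Your outline contains none of this machinery and does not indicate how to replace it.
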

Choosing the sequences  $(M_{j})_{j\geq1}$ and $(k_{j})_{j\geq1}$ in such a way that for all $j\geq 1$, $M_j\geq j+1$ and $k_j\geq j+2$ ensures that the differentiation operator $D$ -- the crucial example of an operator  on 
$\oc$ which admits a hypercyclic algebra -- belongs to $\clmk$. The operators $D$ and $\frac{1}{n!} D^n$ will be of special importance in the proof of Theorem \ref{maintheorem}; they will play a role analogous to the left shift operator and its iterates in the Banach space case.
\par\smallskip
Before going further, we notice that the differentiation operator is a so-called \emph{tame operator}, and that the spaces $\clmk$ in Theorem \ref{maintheorem} include spaces of tame operators. They are thus natural spaces to be considered in the context of Fr\'echet spaces. We recall briefly the definition of a tame operator, and we refer to the survey \cite{Ham} for more on tame Fr\'echet spaces. Let $\tilde X$ be a Fr\'echet space whose topology is given by a certain increasing family of semi-norms $(\tilde N_j)_{j\ge 1}$, and let $\tilde T\colon \tilde X\to \tilde X$ be a linear map.
  The map $\tilde T$ is called \emph{tame} if there exist two integers $b,r\ge 1$ such that for every  $j\ge b$, the following is true:
there exists a constant $C_j>0$ such that $\tilde N_j(\tilde Tx)\le C_j \tilde N_{j+r}(x)$ for every $x\in\tilde X$. A tame linear map is automatically continuous.
It can easily be observed that the differentiation operator $D$ is tame, and the operators belonging to the space $\clmk$ considered in Theorem \ref{maintheorem} are tame provided  that there  exists an integer $r\geq 2$ such that $k_j=j+r$ for all $j$ sufficiently large.

\par\smallskip

In order to prove Theorem \ref{maintheorem}, we must exhibit a dense set of operators $T$ belonging to $\clmk$ admitting a hypercyclic 
algebra. These operators will be of the form
\[
T=A+\dfrac{1}{(n+1)!}D^ {n+1}
\]
for some integer $n\ge 0$,
where $A$ acts on $\cc_{n}[z]$ (the vector space of complex polynomials of degree at most $n$) as a \emph{nilpotent} endomorphism, and $A(z^{j})=0$ for every $j>n$. We note that these operators are in general not of the form $\phi (D)$, where $\phi $ is an entire function of 
exponential type (compare to \cite{Bay1}, \cite{BCP}, \cite{BesCP}); thus they provide {a new family} of operators acting on $\oc$ that admit hypercyclic algebras.
\par\medskip 
We also investigate the case of the complex Fr\'echet algebras $X=\ell_{p}(\nn)$, $1\le p<+\infty$ or $X=c_{0}(\nn)$ endowed with the coordinatewise  product: if $x=(x_{n})_{n\geq 1}$ and 
$y=(y_{n})_{n\geq 1}$ are two sequences of complex numbers belonging to $X$, we define $x \cdot y=(x_{n}y_{n})_{n\geq 1}$. Then $\norm{x \cdot y}\le \norm{x}\,\norm{y}$, where 
$\norm{\,\cdot\,}$ denotes the classical $\ell_{p}$- or $c_{0}$-norm on $X$. As $X$ is a Banach space, we place ourselves in the closed balls $\bmx$ of $\bx$, $M>1$, and we prove the following theorem.
\begin{theorem}\label{Theorem 2}
 Let $X=\ell_{p}(\nn)$, $1\le p<+\infty$, or $X=c_{0}(\nn)$, endowed with the coordinatewise product. Let $M>1$. A typical operator $T\in(\bmx,\emph{\sot})$ admits a hypercyclic algebra.
\end{theorem}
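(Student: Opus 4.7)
The plan is to follow the Baire-category strategy used in Theorem \ref{maintheorem}: I would exhibit an \sot-dense class $\mathcal{D}$ of operators in $\bmx$, each admitting a hypercyclic algebra, and show that the set of operators admitting a hypercyclic algebra contains an \sot-$G_\delta$ subset containing $\mathcal{D}$. The conclusion will then follow from the Baire Category Theorem.

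For the $G_\delta$-reduction, I would argue as in the proof of Theorem \ref{maintheorem}: by applying Baire inside $X$ to the polynomial algebras generated by single vectors, the property "$T$ admits a hypercyclic algebra" can be reduced to a countable conjunction of transitivity-type conditions, indexed by a countable open basis $(V_k)$ of $X$ and a countable family $(P_\ell)$ of polynomials $P\in\C[t]$ with $P(0)=0$. Each such condition has the form "there exist $x\in V_j$ and $N\ge 0$ with $T^N P_\ell(x)\in V_k$" and defines an \sot-open subset of $\bmx$, since $T\mapsto T^N v$ is \sot-continuous on $\bmx$ for every fixed $v$ (induction on $N$, using the uniform bound $\|T\|\le M$). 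Intersecting over $\ell, j, k$ provides the required $G_\delta$ set $\mathcal{G}$.

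For the dense class $\mathcal{D}$, in analogy with the operators $A+\frac{1}{(n+1)!}D^{n+1}$ of Theorem \ref{maintheorem}, I would consider operators $B_{A,w}$ on $X$ parametrized by an integer $n\ge 0$, a nilpotent linear map $A$ on $\spn{e_1,\dots,e_{n+1}}$ extended by zero on the remaining basis vectors, with $\|A\|\le M$, and a weight sequence $w=(w_k)_{k\ge n+2}$ with $1<\inf_k w_k\le \sup_k w_k\le M$. The action is $B_{A,w}e_j=Ae_j$ for $j\le n+1$ and $B_{A,w}e_k=w_k e_{k-n-1}$ for $k>n+1$, so that $\|B_{A,w}\|\le M$. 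To establish \sot-density: given $T_0\in\bmx$, vectors $x_1,\dots,x_r\in X$, and $\epsilon>0$, I would choose $N$ with $\|x_j-P_N x_j\|<\epsilon$ for all $j$ (where $P_N$ denotes projection onto the first $N$ coordinates), take $n$ substantially larger than $N$ and $r$, and construct a nilpotent $A$ of norm at most $M$ on $\spn{e_1,\dots,e_{n+1}}$ interpolating the finite list of input-output pairs $P_N x_j\mapsto P_n(T_0 x_j)$: this is a finite-dimensional linear-algebra problem solvable thanks to the ample dimension of $\C^{n+1}$. The resulting $B_{A,w}$ is \sot-close to $T_0$ at the $x_j$'s.

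The main obstacle is verifying that each $B_{A,w}$ in $\mathcal{D}$ actually admits a hypercyclic algebra, especially when $M$ is close to $1$ and the weights $w_k$ are constrained to the narrow range $(1,M]$. The nilpotency $A^{n+1}=0$ implies that $B_{A,w}^N$ behaves, for large $N$, like a high power of the weighted shift of step $n+1$, which suggests applying the eigenvector-field approach of \cite{Bay1} together with the hypercyclic-algebra criterion of \cite{BayMat}*{Chapter 8}. The coordinatewise product simplifies matters considerably: for $P\in\C[t]$ with $P(0)=0$ and $x=\sum_i x_i e_i\in X$, one has $P(x)=\sum_i P(x_i)e_i$, reducing estimates on $B_{A,w}^N P(x)$ to coordinatewise scalar estimates. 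Adapting the known hypercyclic-algebra criteria for weighted shifts (\cite{BCP}, \cite{FGE20a}) to this constrained regime is where the real work lies.
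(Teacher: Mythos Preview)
Your overall architecture (Baire category, a $G_\delta$ reduction via a Birkhoff-type criterion, and an \sot-dense class of ``good'' operators) matches the paper's strategy, but your implementation diverges from the paper's in a way that introduces a concrete error and leaves the central step unproved.

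\textbf{A norm error in your dense class.} Your claim that $\|B_{A,w}\|\le M$ whenever $\|A\|\le M$ and $\sup_k w_k\le M$ is false. The ranges of $A$ and of the weighted step-$(n{+}1)$ shift $S_w$ overlap on $\spn{e_1,\dots,e_{n+1}}$, so the two pieces interfere. For instance, with $n=1$, take the nilpotent $A$ defined by $Ae_1=Me_2$, $Ae_2=0$, and $w_4=M$; then $B_{A,w}(e_1+e_4)=2Me_2$, so in $\ell_2$ one gets $\|B_{A,w}\|\ge M\sqrt{2}$. The paper avoids this by making the weights in the overlap region \emph{small}: its approximating operator acts as $S_N=P_NSP_N$ on the first block, as a shift with weight $\delta$ on the next block (with $\|S\|+\delta<M'<M$), and as a shift with weight $M'$ beyond. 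A H\"older/Cauchy--Schwarz check then gives $\|T\|\le M'$. Your construction has weights $w_k>1$ throughout, so no such estimate is available; you would have to redesign the class.

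\textbf{Nilpotency is not used in the paper's Banach-space argument.} You impose $A^{n+1}=0$ by analogy with the $\oc$ case, and then face the nontrivial task of producing a nilpotent $A$ of norm $\le M$ interpolating prescribed data; your ``ample dimension'' remark is not a proof, and the norm constraint makes this delicate (cf.\ Remark~3.3 in the paper). The paper sidesteps this entirely: for Theorem~\ref{Theorem 2} the finite-rank block $S_N$ is \emph{not} nilpotent. Instead of building a dense class each of whose members admits a hypercyclic algebra, the paper proves a one-step Lemma (Lemma~5.1): given $S$ with $\|S\|<M$ and a single tuple $(\mathcal U,\mathcal V,\mathcal W,m_0,m_1)$, it produces $T$ close to $S$, a vector $b\in\mathcal U$, and $k$ with $T^k(b^{m_0})\in\mathcal V$ and $T^k(b^n)\in\mathcal W$ for $m_0<n\le m_1$. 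The key device is a conjugation by an operator $L$ close to the identity, which sends suitable vectors $y_k$ with disjoint supports (so that $y_k^n$ is computed coordinatewise) to combinations of eigenvectors; this is what exploits the coordinatewise product. The density of $G_M(X)$ is then obtained by an inductive ``closing-up'' construction, handling one tuple at a time and passing to the \sot-limit.

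\textbf{The step you flag as ``where the real work lies'' is indeed the heart of the matter.} Your proposal acknowledges but does not carry out the verification that each $B_{A,w}$ satisfies the hypercyclic-algebra criterion; pointing to \cite{BCP} and \cite{FGE20a} for weighted shifts does not settle it for your perturbed operators, and the narrow weight range $(1,M]$ when $M$ is close to $1$ is precisely the regime where existing shift criteria are most delicate. The paper's Lemma~5.1 does this work explicitly, via the eigenvector construction of \cite{GMM1}*{Proposition 2.10} and the conjugation trick, and this is what you are missing.
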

The proofs of Theorems \ref{maintheorem} and \ref{Theorem 2} build on the approach via eigenvectors introduced by Bayart in \cite{Bay1}, and further developed by Bayart et al.\ in 
\cite{BCP}. These results show, in two different classical contexts, that having a hypercyclic algebra is a quite common phenomenon, at least from the Baire Category point of view. 

However, at this point a word of caution is in order: as explained in \cite{GMM2}*{Proposition 3.2}, hitherto the properties (P) of operators (on $X=\ell_{p}(\nn)$ or $c_{0}(\nn)$) which have  been studied from the typicality perspective are either typical or ``atypical'' in $(\bmx,\sot)$; in other words, for such a property (P), either a typical $T\in(\bmx,\sot)$ has (P), or a typical  $T\in(\bmx,\sot)$ does not possess (P). This relies on the topological $0-1$ law (cf.\ \cite{Kec}*{Theorem 8.46}, and \cite{GMM2}*{Proposition 3.2} for its application in our context) and on the fact that for the properties under consideration, $T$ has (P) if and only if 
$JTJ^{-1}$ has (P) for every surjective isometry $J$ of $X$. In our setting, this argument fails: having a hypercyclic algebra is not a property  which is a priori  stable by conjugation by 
invertible isometries (cf.\ \cite{BCP}*{Remark\ 4.7}, where it is observed that admitting a hypercyclic algebra is not a property which is preserved by similarity).
\par\medskip
The paper is organized as follows: Sections \ref{section1bis}, \ref{section2}, \ref{section3} and \ref{section4} are devoted to the proof of our main result, 
Theorem \ref{maintheorem}.
We introduce in Section \ref{section1bis} 
 our Polish spaces of operators on the algebra of entire functions; then a technical result regarding the density of certain classes of operators in $(\clmk,{\sot})$  is proved in Section \ref{section2}; in Section \ref{section3} we show, using the Godefroy-Shapiro Criterion, that hypercyclic operators form a dense $G_\delta$ subset of $(\clmk,{\sot})$; finally in Section \ref{section4}, after  recalling the Baire Category Criterion from 
\cite{BCP}*{Corollary 2.4} which we use  to prove the existence of a hypercyclic algebra for the operators under consideration, we prove Theorem \ref{maintheorem}. Theorem \ref{Theorem 2} is proved in 
Section \ref{section5}. Section \ref{section6} contains the additional result that when $X=\ell_{p}(\nn)$, $1\le p<+\infty$, or $X=c_{0}(\nn)$, and $M>1$, a typical 
$T\in(\bmx,\sot)$ admits a closed infinite-dimensional hypercyclic subspace (Theorem \ref{theorem:HcSubspTypicalSOT}).

\section{Polish Space of operators on the algebra of entire functions.}\label{section1bis}

We present in this section some general facts which will be of use in the sequel. We begin by proving the following proposition for a general separable Fr\'echet space $X$.
\begin{proposition}\label{propositionpolonais}
Let $X$ be a separable Fr\'echet space, and let $(N_j)_{j\geq 1}$ be a sequence of semi-norms defining its topology. Let  $(M_{j})_{j\geq1}$ be a sequence of positive real numbers and let $(k_{j})_{j\geq1}$ be a sequence of positive integers. Then
$$
\clmk(X)\coloneqq\bigl\{T\in\mathcal{L}(X)\;;\;\forall j\geq1,\ \forall x\in X,\ N_{j}(Tx)\le M_{j}\,N_{k_{j}}(x)\bigr\}
$$ is a Polish space when endowed with the $\emph{\sot}$.
\end{proposition}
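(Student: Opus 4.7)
The plan is to embed $(\clmk(X),\sot)$ homeomorphically onto a closed subset of the Polish space $X^{\mathbb N}$. Since $X$ is a separable Fr\'echet space it is Polish, hence so is the countable product $X^{\mathbb N}$, and closed subspaces of Polish spaces are Polish. The key feature making this work is that the uniform semi-norm inequality defining $\clmk(X)$ lets me convert pointwise information on any countable dense subset of $X$ into uniform control on all of $X$.

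First I would fix a countable dense sequence $(x_n)_{n\ge 1}$ in $X$ and consider the evaluation map
\[
\Phi\colon \clmk(X)\to X^{\mathbb N},\qquad \Phi(T)=(Tx_n)_{n\ge 1}.
\]
This is injective (by density of $(x_n)$ and continuity of operators) and continuous from $\sot$ to the product topology (trivially). The step I expect to be the main obstacle is the converse direction: that pointwise convergence on the sequence $(x_n)$ already forces $\sot$-convergence within $\clmk(X)$. For $T_m,T\in\clmk(X)$, any $x\in X$, any $n\ge 1$, and any $j\ge 1$,
\[
N_j(T_m x-Tx)\le N_j\bigl((T_m-T)x_n\bigr)+2M_j\,N_{k_j}(x-x_n)
\]
by the triangle inequality and the defining bound; approximating $x$ first by some $x_n$ in the $N_{k_j}$-semi-norm and then sending $m\to\infty$ handles both terms. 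Thus $\Phi$ is a homeomorphism onto its image.

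To close the argument I would verify that $\Phi(\clmk(X))$ is closed in the metrizable space $X^{\mathbb N}$. Given any sequence $(T_k)\subset\clmk(X)$ with $T_k x_n\to y_n$ in $X$ for every $n\ge 1$, the uniform bound $N_j(T_k x-T_k x')\le M_j\,N_{k_j}(x-x')$ together with the convergence at each $x_n$ shows, exactly as in the estimate above applied to an arbitrary $x\in X$ and approximating sequence $x_n\to x$, that $(T_k x)_k$ is Cauchy in $X$ for every $x\in X$. Setting $Tx\coloneqq\lim_k T_k x$ defines a linear map $T\colon X\to X$; passing to the limit in $N_j(T_k x)\le M_j\,N_{k_j}(x)$ yields $T\in\clmk(X)$, and by construction $\Phi(T)=(y_n)_{n\ge 1}$. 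This exhibits $\clmk(X)$ as a closed subspace of $X^{\mathbb N}$, hence as a Polish space.
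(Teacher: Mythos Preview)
Your proof is correct and follows essentially the same route as the paper's: both embed $\clmk(X)$ into the Polish product space $X^Z$ (equivalently $X^{\mathbb N}$) via evaluation at a countable dense set, and then identify the image with a closed subspace. Your write-up is in fact more detailed than the paper's, which simply states that the restriction map to $X^Z$ is a homeomorphism onto the corresponding closed set of linear maps satisfying the semi-norm bounds.
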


\begin{proof} The proof is similar to that of \cite{GMM2}*{Lemma 3.1}. We recall it here briefly for the sake of completeness. We denote by $Z$ a countable dense subset of $X$. Then let
$$\tilde{\mathcal{L}}_{(M_{j}),(k_{j})}(Z)\coloneqq\{T\colon Z\to X;\ T\text{ is linear and } \forall j\ge 1, \forall x\in Z, \ N_j(Tx)\leq M_j N_{k_j}(x)\}.$$ 
Then $\tilde{\mathcal{L}}_{(M_{j}),(k_{j})}(Z)$ is closed in $X^Z$, where $X^Z$ is endowed with the product topology. Indeed, since $X^Z$ is metrizable, let $(T_k)_k$ be a sequence of elements of $\tilde{\mathcal{L}}_{(M_{j}),(k_{j})}(Z)$ which converges to $T$. Since the product topology in $X^Z$ is the pointwise topology, $T$ is necessarily linear; since for all $j\geq 1$, and all $x\in Z$, the sequence $(N_j(T_kx))_k$ converges to $N_j(Tx)$, we also have that $N_j(Tx)\leq M_j N_{k_j}(x)$, and so $T$ belongs to $\tilde{\mathcal{L}}_{(M_{j}),(k_{j})}(Z)$.
\par\smallskip
Since $X^Z$ is a Polish space, $\tilde{\mathcal{L}}_{(M_{j}),(k_{j})}(Z)$ is a Polish space as a closed subset of a Polish space. Now, we observe that the map $\Phi\colon (\clmk(X), \sot)\to \tilde{\mathcal{L}}_{(M_{j}),(k_{j})}(Z)$ defined by $\Phi(T)=T|_Z$ is a homeomorphism, and we conclude that $(\clmk(X),\sot)$ is also a Polish space.\end{proof}

\par\medskip

From now on, we will consider the case where $X=H(\C)$, the space of entire functions on the complex plane endowed with the topology of uniform convergence on compact sets.
We denote by $(z^k)_k$ the monomial basis of $H(\C)$ and for $z\in\cc$ and $r>0$, we let $D(z,r)$ be the open disk of center $z$ and  radius $r$. We record here the following property.
\begin{property}\label{propri??t??1}
 For every $f\in \oc$ and every $j\ge 1$, we have
$$\sup_{D(0,j)}|f|\leq N_j(f)\leq (j+1)\sup_{D(0,j+1)}|f|.$$
\end{property}
\begin{proof} The lefthand-side inequality is trivial, and the righthand-side inequality follows from Cauchy's inequalities: for all $k\in\nn$ and all $r>0$, we have 
\begin{align*}
 \frac1{k!}\left|\diffp{^kf}{z^k}(0)\right|&= \frac1{2\pi} \left|\int_{|\zeta|=r} \frac{f(\zeta)}{\zeta^{k+1}} d\zeta\right|
 \leq \frac1{r^k}\sup_{D(0,r)}|f|.
\end{align*}
This implies that
\begin{align*}
 N_j(f)
 &\le \sum_{k=0}^{+\infty} \left(\frac{j}{j+1}\right)^k\sup_{D(0,j+1)}|f|=(j+1)\sup_{D(0,j+1)}|f|.
\end{align*}
\end{proof}
 \par\smallskip

Therefore a sequence $(f_n)_n$ of functions in $\oc$ converges to $f$ in $\oc$ if and only if the sequence $(N_j(f_n-f))_n$ converges to $0$ for every $j\ge 1$; thus the family $(N_j)_j$ indeed induces the topology of $\oc$.

 \par\smallskip

The next property provides a useful neighborhood basis of an element of $\clmk$ for the ${\sot}$.

\begin{property}\label{propri??t??2}
Let $T_0$ belong to $\clmk$.
 A neighborhood basis of $T_0$ for the \emph{\sot} is given by the family of sets
$$\cv_{\varepsilon,r,K}^{T_0}=\{T\in \clmk\, ;\, \forall k=0,\ldots, K, \ N_r((T-T_0)z^k)<\varepsilon\},$$ 
where $\varepsilon>0$, and $K,r$ are integers with $K\ge 0$ and $r\ge 1$.
\end{property}
\begin{proof} It suffices to show that if $(T_k)_k$ is a sequence of elements of $\clmk$ such that, for all $\varepsilon>0$, all $K\ge 0$ and all $r\ge 1$, there exists $\kappa_0\in\nn$ such that for all $k\geq \kappa_0$, $T_k$ belongs to $\cv_{\varepsilon,r,K}^0$, then $(T_k)_k$ converges \sot \ to $0$.
\par\smallskip
Let $f$ be an entire function and let $j\ge 1$ be a positive integer. There exists an analytic nonzero polynomial $P$ such that $N_{k_j}(f-P)<\frac{\varepsilon}{2M_j}$. Denote by $d$ the degree of $P$, write $P$ as $P(z)=\sum_{l=0}^d p_lz^l$, and set $\varepsilon'=\frac{\varepsilon}{2\sum_{l=0}^d |p_l|}$. By assumption, there exists $\kappa_0\in\nn$ such that for all $k\geq \kappa_0$, $T_k$ belongs to $\cv_{\varepsilon',d,j}^0$. Therefore we have 
\begin{align*}
 N_j(T_kf)
 &\leq N_j(T_k(f-P))+N_j(T_k P)
 \leq M_j N_{k_j}(f-P)+\sum_{l=0}^d |p_l|N_j(T_kz^l)\\
 &\leq M_j \frac{\varepsilon}{2M_j} + \varepsilon'\sum_{l=0}^d |p_l|=\varepsilon.
\end{align*}
This implies that the sequence $(T_kf)_k$ converges to $0$ uniformly on any compact subset of $\cc$, and thus $(T_k)_k$ converges \sot\ to 0. \end{proof} 
\par\medskip
In the forthcoming proof of Theorem \ref{maintheorem}, the operators
 $S_n\coloneqq\frac1{n!} D^n$, $n\geq 1$, will play a crucial role. It is thus important to be able to demonstrate that they belong to the space $\clmk$. In the next proposition, we provide conditions on the sequences $(M_j)_{j\geq1} $ and $(k_j)_{j\geq 1}$ which ensure that this is indeed the case.
 
\begin{proposition}\label{proposition3}
 Let $(M_j)_{j\ge 1}$ be a sequence of positive real numbers and let $(k_j)_{j\ge 1}$ be a sequence of positive integers such that for every $j\ge 1$, $M_j\geq j+1$ and $k_j\geq j+2$. Then $S_n$ belongs to $\clmk$ for every $n\ge 1$.
\end{proposition}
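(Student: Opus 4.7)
The plan is a direct computation based on Taylor expansions. Given $f\in\oc$ with expansion $f(z)=\sum_{m\geq 0} a_m z^m$, I would first write down the action of $S_n$ on this basis, namely $S_n f(z)=\sum_{m\geq n} a_m\binom{m}{n} z^{m-n}$, which gives the closed form
\[
N_j(S_n f)=\sum_{m\geq n}|a_m|\binom{m}{n} j^{m-n}.
\]
The goal then reduces to the coefficientwise inequality $\binom{m}{n} j^{m-n}\leq M_j k_j^{m}$ for every $m\geq n$, since summing against $|a_m|$ and extending to all $m\geq 0$ yields $N_j(S_n f)\leq M_j N_{k_j}(f)$.

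The key algebraic step is a one-line use of the binomial theorem: writing $(j+2)^m=\sum_{i=0}^m \binom{m}{i}j^i 2^{m-i}$ and keeping only the term with index $i=m-n$ gives
\[
(j+2)^m\geq \binom{m}{m-n} j^{m-n} 2^n = 2^n \binom{m}{n}\, j^{m-n}\geq \binom{m}{n}\, j^{m-n}.
\]
Combining this with the hypotheses $k_j\geq j+2$ and $M_j\geq j+1\geq 1$, I obtain
\[
\binom{m}{n}\, j^{m-n}\leq (j+2)^m\leq k_j^m\leq M_j k_j^m,
\]
which is exactly the desired pointwise estimate.

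Summing the resulting inequalities over $m\geq n$ then yields $N_j(S_n f)\leq M_j N_{k_j}(f)$ for every $f\in\oc$ and every $j\geq 1$. Since this bound implies in particular that $S_n$ is continuous on $\oc$ (the $N_j$ define its topology by Property \ref{propri??t??1}), we conclude that $S_n\in\clmk$. There is no real obstacle here: the argument is entirely elementary, and the only choice to make is the right application of the binomial theorem so that the factor $j^{m-n}$ appears with the correct binomial coefficient on the right-hand side.
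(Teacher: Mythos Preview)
Your proof is correct and in fact yields the sharper estimate $N_j(S_n f)\le N_{j+2}(f)$, whereas the paper obtains $N_j(S_n f)\le (j+1)\,N_{j+2}(f)$; both are enough since $M_j\ge j+1\ge 1$.

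The routes are genuinely different. The paper argues function-theoretically: it applies Cauchy's integral formula on a circle of radius $1$ to bound $|S_n f(z)|$ pointwise by $\sup_{D(0,j+2)}|f|$, and then invokes the sup-norm comparison of Property~\ref{propri??t??1} to pass back to the seminorms $N_j$. Your argument stays entirely at the level of Taylor coefficients: you write $N_j(S_n f)=\sum_{m\ge n}|a_m|\binom{m}{n}j^{m-n}$ and reduce to the single combinatorial inequality $\binom{m}{n}j^{m-n}\le (j+2)^m$, which drops out of the binomial expansion of $(j+2)^m$. This is more elementary (no Cauchy formula, no detour through sup-norms) and produces a cleaner constant. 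The paper's approach, on the other hand, is insensitive to the precise shape of $S_n$ on monomials and would adapt more readily to operators for which explicit coefficient formulas are unavailable.
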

\begin{proof} Let $n\ge 1$, $j\ge 1$, and let $z\in D(0,j+1)$. By Cauchy's formula, we have:
$$\frac1{n!}\diffp{^nf}{z^n}(z)=\frac1{2i\pi}\int_{|\zeta-z|=1} \frac{f(\zeta)}{(\zeta-z)^{n+1}}d\zeta$$ so that $|S_n(f)(z)|\leq \sup_{D(0,j+2)}|f|$. We then deduce from Property \ref{propri??t??1} that 
\begin{align*}
N_j(S_nf) 
&\leq (j+1) \sup_{D(0,j+1)}|S_n(f)|
\leq (j+1) \sup_{D(0,j+2)}|f|
\leq (j+1) N_{j+2}(f),
\end{align*}  
and this proves that $S_n$ belongs to $\clmk$ when $(M_j)_{j\ge 1}$ and $(k_j)_{j\ge 1}$ satisfy the assumptions of this proposition.
\end{proof}

\section{Dense families of operators in $\clmk$}\label{section2}

For every $n\ge 0$, we define the operator $T_n\colon \oc\to\oc$ by setting $$T_nf(z)=\sum_{j=0}^n\frac1{j!} \diffp{^jf}{z^j}(0)z^j,\quad \textrm{where } f\in\oc$$ and we note that for all $j\ge 1$, $N_j(T_nf)\leq N_j(f)$. 
\par\smallskip
Given $A\in\clmk$, our aim is to approximate $A$ with an operator supporting a hypercyclic algebra. We first approximate $A$ by an operator of the form $B_n=T_n A T_n$. When $n$ goes to infinity, $(B_n)_n$ converges \sot\ to $A$, and for all $n\ge 1$, $B_n$ belongs to $\clmk$. Indeed, we have for every $f\in\oc$ and every $n,j\ge 1$ that
\begin{align*}
 N_j(B_nf)
 \leq N_j(A T_n f)
 \leq M_jN_{k_j}(T_nf)
 \leq M_jN_{k_j}(f).
 \end{align*}
 \par\smallskip
When $M_j\geq j+1$ and $k_j\geq j+2$ for all $j\ge 1$, it follows from Proposition \ref{proposition3} that $S_{n+1}$ belongs to $\clmk$ {for every $n \geq 0$}. Hence, for every $\delta\in[0,1]$, the operator $(1-\delta)B_n+\delta S_{n+1}$ also belongs to $\clmk$. Moreover, $\left((1-\delta)B_n+\delta S_{n+1}\right)_{\delta\in (0,1)}$ \sot\ converges to $B_n$ as $\delta$ goes to $0$. Thus we have proved the following.

\begin{proposition}\label{approximationhypercyclique}
 Let $(M_j)_{j\ge 1}$ be a sequence of positive real numbers, and let $(k_j)_{j\ge 1}$ be a sequence of positive integers such that  $M_j\geq j+1$ and $k_j\geq j+2$ for every $j\ge 1$. Then the following holds: 
 for every $A\in \clmk$ and every \emph{\sot}-neighborhood $\cv$ of $A$, there exist $n\ge 0$, an operator $B\in\clmk$ satisfying $B=T_n B T_n$,  and $\delta\in(0,1)$ such that $B+\delta S_{n+1}$ belongs to $\cv$. 
\end{proposition}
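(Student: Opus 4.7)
The plan is to simply assemble the ingredients that were established in the two paragraphs preceding the proposition statement. The construction will produce $B$ of the form $B=(1-\delta)B_n$ where $B_n=T_nAT_n$, so that $B+\delta S_{n+1}=(1-\delta)B_n+\delta S_{n+1}$; the proof then amounts to showing that one can first make $n$ large (so that $B_n$ is \sot-close to $A$), and then choose $\delta>0$ small (so that the convex combination is \sot-close to $B_n$), while checking that $B\in\clmk$ and $B=T_nBT_n$.

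First I would verify, using Property \ref{propri??t??2}, that $B_n\to A$ in the \sot. Since the relevant neighborhood basis is described in terms of the seminorms $N_r$ applied to the monomials $z^k$, it is enough to check that $N_r(B_nz^k-Az^k)\to 0$ as $n\to\infty$ for each fixed $k\ge 0$ and $r\ge 1$. For $n\ge k$ one has $T_nz^k=z^k$, hence $B_nz^k=T_n(Az^k)$, and the operator $T_n$ truncates the Taylor expansion of $Az^k$ at degree $n$; since $Az^k\in\oc$, the tail $\sum_{m>n}|a_m|r^m$ of the series defining $N_r(Az^k)$ tends to $0$, which gives the desired convergence. The fact that $B_n\in\clmk$, already noted in the text, follows from the trivial inequality $N_j(T_ng)\le N_j(g)$ combined with $A\in\clmk$.

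Next I would fix the parameters. Given a \sot-neighborhood $\cv$ of $A$, I pick a smaller basic neighborhood $\cv_{\varepsilon,r,K}^A\subset \cv$ by Property \ref{propri??t??2}, and choose $n\ge 0$ large enough so that $B_n\in\cv_{\varepsilon/2,r,K}^A$. Setting $B\coloneqq(1-\delta)B_n$ for $\delta\in(0,1)$ to be determined, it is immediate that $B\in\clmk$: for every $f\in\oc$ and every $j\ge 1$,
\[
N_j(Bf)=(1-\delta)N_j(B_nf)\le (1-\delta)M_jN_{k_j}(f)\le M_jN_{k_j}(f).
\]
To check $B=T_nBT_n$, observe that $T_n^2=T_n$ (since $T_nf$ is a polynomial of degree at most $n$, already fixed by $T_n$), so $T_nB_nT_n=T_n(T_nAT_n)T_n=T_nAT_n=B_n$, and therefore $T_nBT_n=(1-\delta)T_nB_nT_n=(1-\delta)B_n=B$.

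Finally, Proposition \ref{proposition3} ensures $S_{n+1}\in\clmk$ (using $M_j\ge j+1$ and $k_j\ge j+2$), and for any $f\in\oc$ and $j\ge 1$ we have
\[
N_j\bigl((B+\delta S_{n+1})f-B_nf\bigr)=\delta\, N_j(S_{n+1}f-B_nf)\xrightarrow[\delta\to 0^+]{}0.
\]
Applying this to $f=z^k$ for $k=0,\dots,K$, I can choose $\delta\in(0,1)$ small enough that $N_r((B+\delta S_{n+1})z^k-B_nz^k)<\varepsilon/2$ for each such $k$. Combined with $B_n\in\cv_{\varepsilon/2,r,K}^A$, this yields $B+\delta S_{n+1}\in\cv_{\varepsilon,r,K}^A\subset\cv$, completing the proof. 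There is no real obstacle here: every step is either routine or already isolated in the preceding discussion; the only mildly delicate point is to sequence the choices correctly (choose $n$ first, then $\delta$) so that the two approximations compose properly.
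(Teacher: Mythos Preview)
Your proposal is correct and follows exactly the paper's approach: the paper's ``proof'' is precisely the two paragraphs preceding the proposition that you reference, and you have simply filled in the details (the explicit verification that $T_nBT_n=B$ via idempotence of $T_n$, and the $\varepsilon/2$-splitting for the basic neighborhood) that the paper leaves implicit.
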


We will prove in Section \ref{section3} below that operators of the form $B+\delta S_{n+1}$ are hypercyclic, where  $\delta>0$, $n\ge 0$ and $B$ is such that $B=T_n B T_n$. So the reader mainly interested in the density of hypercyclic operators in $\clmk$ can proceed directly to Section \ref{section3}.
\par\smallskip
On the other hand, it does not seem to be trivial that an operator of the form $B+\delta S_{n+1}$ with  $\delta\in(0,1)$, $n\ge 0$ and $B$ such that $B=T_n B T_n$ supports a hypercyclic algebra. What we will be able to prove in Section \ref{section4} is that if for every polynomial $P$, the sequence $(B^j P)_j$ converges to $0$ in $\oc$, then $B+\delta S_{n+1}$ indeed supports a hypercyclic algebra (cf.\ Theorem \ref{operateur_algebre_hypercyclique}). With this result in view, we now show that the family of nilpotent operators $B$ satisfying $B=T_n B T_n$ for some $n\ge 0$ is \sot\ dense in $\clmk$ provided that the sequences $(M_j)_j$ and $(k_j)_j$ satisfy some suitable assumptions, which in fact ensure that $\clmk$ contains sufficiently many operators.

\begin{proposition}\label{approximationalgebrehypercyclique}
 Let $(M_j)_{j\ge 1}$ be a sequence of positive real numbers, and let $(k_j)_{j\ge 1}$ be a sequence of positive integers such that 
 \begin{enumerate}[\normalfont (i)]
  \item\label{condi} for all $\alpha\ge 1$, $j^\alpha=o({M_j})$ as $j$ tends to infinity;
  \item\label{condii} $k_j>j$ for every $j\ge 1$;
  \item\label{condiii} $M_1<k_1$ and $k_j\geq k_1$ for every $j\ge 1$.
 \end{enumerate}
Let $A$ be an operator belonging to $\clmk$ and let $\cv$ be an \emph{\sot}-neighborhood of $A$ in $\clmk$. Then there exist $B\in \cv$ and $n\ge 1$ such that $B=T_n B T_n$ (in particular, for all $f\in\oc$, $Bf$ is a polynomial of degree at most $n$) and $B^{n+1} =0$.
\end{proposition}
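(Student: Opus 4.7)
The plan is to construct an explicit operator $B \in \cv$ of the form $B = \tilde B \circ T_n$, where $\tilde B : \cc_n[z] \to \cc_n[z]$ is a linear map with $\tilde B^2 = 0$ (which for $n \geq 1$ immediately gives $B^{n+1} = 0$, since $B^m f = \tilde B^m (T_n f)$ for every $m \geq 1$). The columns $\tilde B z^k$ will approximate $T_n A z^k$ for $k \leq K$, and will lie in a fixed subspace $W \subseteq \cc_n[z]$ of dimension $K+1$ for $K < k \leq n$.

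First, I would choose $n$ large enough that $N_r((A - T_nAT_n) z^k) < \varepsilon/2$ for every $k = 0, 1, \ldots, K$; this is possible since $T_n A T_n$ tends to $A$ in the \sot\ as $n \to \infty$, as recorded just before Proposition \ref{approximationhypercyclique}. Next, I would define $\tilde B z^k := u_k$ for $k = 0, 1, \ldots, K$, where $u_k := T_n A z^k + \rho_k$ and the perturbations $\rho_k$ are small in the $N_r$-seminorm (with $\sum_{k \leq K} N_r(\rho_k) < \varepsilon/2$) and concentrated on carefully chosen high-degree monomials so that the vectors $u_0, \ldots, u_K$ have linearly independent high-degree parts. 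Letting $W := \mathrm{span}(u_0, \ldots, u_K)$, I would then set $\tilde B z^k := \sum_{l = 0}^K \beta_{l, k} u_l \in W$ for $K < k \leq n$, with scalars $\beta_{l, k}$ chosen so that $\tilde B u_j = 0$ for all $j \leq K$. This forces $W \subseteq \ker \tilde B$, and combined with $\mathrm{Im}(\tilde B) \subseteq W$ by construction, yields $\tilde B^2 = 0$.

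Unfolding $\tilde B u_j = 0$ in the monomial basis, the scalars $\beta_{l, k}$ must satisfy the linear system
\[
\sum_{k > K} (u_j)_k\, \beta_{l, k} \;=\; -(u_j)_l \qquad \text{for } j, l \in \{0, 1, \ldots, K\},
\]
where $(u_j)_k$ denotes the $z^k$-coefficient of $u_j$. With the $\rho_k$ chosen so that the restriction of the coefficient matrix to an appropriate $(K+1) \times (K+1)$ submatrix is invertible (this is the role of the ``linearly independent high-degree parts'' condition), one obtains explicit expressions for $\beta_{l, k}$ supported on a small set of indices $k > K$. The magnitudes of the $\beta_{l, k}$ depend inversely on the conditioning of this submatrix, so the perturbations $\rho_k$ must be chosen to balance smallness in $N_r$-seminorm against sufficient invertibility.

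The main obstacle is to verify the membership $B \in \clmk$, that is, $N_j(\tilde B z^k) \leq M_j k_j^k$ for every $j \geq 1$ and every $k \leq n$. For $k \leq K$ this reduces to the known bound for $T_n A z^k$ plus a small contribution from $N_j(\rho_k)$. For $K < k \leq n$, one has
\[
N_j(\tilde B z^k) \;\leq\; \Bigl(\sum_l |\beta_{l,k}|\Bigr) \cdot \max_l N_j(u_l),
\]
which must be controlled by $M_j k_j^k = M_j k_j^K \cdot k_j^{k - K}$. The ratio $k_j^{k-K}$ grows in $k$ thanks to hypothesis (ii); the super-polynomial growth of $M_j$ in (i) provides room in the large-$j$ seminorms; and the strict inequality $M_1 < k_1$ in (iii) is essential in the tightest case $j = 1$, where $k_1$ is used to absorb the size of $\beta_{l,k}$. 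Once these inequalities are established, the three conclusions $B \in \cv$, $B \in \clmk$, and $B^{n+1} = 0$ follow at once.
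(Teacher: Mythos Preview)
Your scheme has a genuine obstruction: forcing $\tilde B^2=0$ while keeping $\tilde Bz^k$ close to $Az^k$ for $k\le K$ is incompatible with membership in $\clmk$ as soon as the neighborhood parameter $r$ satisfies $r>k_1$, and you have no control over $r$.

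To see this, take $A\in\clmk$ acting as $Az^k=cz^k$ for $0\le k\le K$ with a fixed small $c>0$ (such $A$ exist). Write $u_k:=\tilde Bz^k=cz^k+\rho_k$ with $N_r(\rho_k)<\varepsilon'$. Since $\tilde B^2=0$ and $u_k\in\mathrm{Im}(\tilde B)$, we must have $\tilde Bu_k=0$; reading off the $z^k$-coefficient of this identity gives
\[
(c+(\rho_k)_k)^2+\sum_{m\ne k,\,m\le K}(\rho_k)_m(\rho_m)_k \;=\; -\sum_{m>K}(\rho_k)_m\,(\tilde Bz^m)_k .
\]
The left side has modulus at least $(c-\varepsilon')^2-(\varepsilon')^2$. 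For the right side, $|(\rho_k)_m|\le\varepsilon'/r^m$ and, by the $\clmk$ bound at $j=1$, $|(\tilde Bz^m)_k|\le N_1(\tilde Bz^m)\le M_1k_1^m$; hence the right side is bounded by $M_1\varepsilon'\sum_{m>K}(k_1/r)^m$, a finite quantity when $r>k_1$. Letting $\varepsilon'\to0$ gives $c^2\le 0$, a contradiction. Thus no choice of perturbations $\rho_k$ or of the values $\tilde Bz^m$ for $m>K$ can rescue the construction. There is also a secondary gap you do not address: $N_j(T_nAz^k)$ may already saturate $M_jk_j^k$, so adding \emph{any} nonzero $\rho_k$ can violate the $\clmk$ inequality for $k\le K$; the paper creates slack by first passing to $(1-\delta)T_{n_0}AT_{n_0}$.

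The paper's proof takes a completely different route, avoiding the rank bottleneck of $\tilde B^2=0$. It builds $B$ so that $B^{n_2+1}=0$ for a large auxiliary parameter $n_2$: on the first block $\{0,\dots,n_0\}$ one sets $Bz^j=A_0z^j+\varepsilon z^{j+n_0+1}$, then $B$ acts as a (weighted) forward shift $z^j\mapsto z^{j+n_0+1}$ over a long intermediate range, and on the last block of $n_0+1$ monomials a carefully designed correction forces the chain to close. The three free parameters $n_1,n_2,\varepsilon$ give enough room to verify all the inequalities $N_j(Bz^m)\le M_jk_j^m$ regardless of $r$, which is exactly where your two-step nilpotent construction runs out of flexibility.
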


\begin{remark}
 Conditions \ref{condi} and \ref{condii} of Proposition \ref{approximationalgebrehypercyclique} ensure that $\clmk$ contains sufficiently many operators. For example, for every $n\ge 0$ and every $a\in\cc$ close enough to $0$ (how close depends on $n$), the operator $B_n$ defined  by $B_nf=az^nf$, $f\in\oc$, belongs to $\clmk$. Indeed, for every $j\ge 1$ and every $i\ge 1$ we have 
 \begin{align*}
  N_j(B_n z^i)&=N_j(az^{i+n})
  = |a| j^{i+n}.
 \end{align*}
Thus $B_n$ belongs to $\clmk$ if and only if  $|a| j^{i+n}\leq M_j k_j^i$ for every $j\ge 1$. Condition \ref{condii} ensures in particular that $j\leq k_j$, so that $ N_j(B_n z^i)\le |a| j^{n} k_j^i$ for every $i,j\ge 1$. Condition \ref{condi} ensures the existence of $j_0\ge 1$ such that $j^n\leq M_j$ for every $j\ge j_0$; if $|a|\le 1$, 
$ N_j(B_n z^i)\le M_j k_j^i$ for every $i\ge 1$ and every $j\ge j_0$.
Now, if $|a|$ is small enough, depending on $n$, we can ensure that $|a|j^n \leq M_j$ for every $1\le j\leq j_0$ and thus $N_j(B_nz^i)\leq M_j k_j^i$ for every $i\ge 1$ and every $j\le j_0$. Hence $B_n$ belongs to $\clmk$.

Condition \ref{condiii} is a technical condition which will be needed in the proof of Proposition \ref{approximationalgebrehypercyclique}. It does not appear to be overly restrictive.
\end{remark}

\begin{remark}
In the Hilbertian setting, it is known that \emph{nilpotent} contractions on a complex separable Hilbert space $H$ are \sotstar-dense in $\mathcal{B}_1(H)$, so in particular \sot-dense (\cite{GriMat}*{Proposition 4.6}). It seems to be unknown whether this result can be extended to $\ell_p$-spaces, $1<p<+\infty$, $p \neq 2$. 
\end{remark}

The rest of this section is devoted to the proof of Proposition \ref{approximationalgebrehypercyclique}.

\begin{proof}[Proof of Proposition \ref{approximationalgebrehypercyclique}]
Without loss of generality (cf.\ Property \ref{propri??t??2}), we assume that $\cv$ has the form
$$\cv=\cv_{\varepsilon_0,r,K}^A=\{B\in\clmk\,;\, \ N_r(Bz^j-Az^j)<\varepsilon_0, \ j=0,\ldots, K\},$$
where $\varepsilon_0>0$, and $r, K\geq 1$ are fixed.

Let $n_0\geq K$ be a sufficiently large integer and $\delta\in(0,1)$ a sufficiently small positive number. We set 
$$A_0=(1-\delta)T_{n_0} A T_{n_0}.$$
If $n_0$ is sufficiently large and $\delta$ is sufficiently small, then $A_0$ belongs to $\cv$. We now fix the parameters $n_0\geq K$ and $\delta\in(0,1)$.
We then consider an operator $B$ defined as follows:
\begin{align*}
 Bz^j=
\begin{cases}
 A_0z^j+\varepsilon z^{n_0+1+j}, & 0\le j\le n_0\\
 \varepsilon z^{n_0+1+j}, & n_0+1\le j\le (n_0+1)(n_1+1)-1\\
 z^{n_0+1+j}, & (n_0+1)(n_1+1) \le j\le  (n_0+1)n_2-1\\
{D_{n_0,n_1,n_2}(z^j)}, & (n_0+1)n_2 \le j\le (n_0+1)(n_2+1)-1\\
 0, & j\geq (n_0+1)(n_2+1)
\end{cases}
\end{align*}
 where 
 \begin{align*}
D_{n_0,n_1,n_2}(z^j) \coloneqq	-\sum_{m=0}^{n_1} & \frac1{\varepsilon^{n_1-m+1}} z^{(n_0+1)m} A_0^{n_2+1-m} z^{j-n_2(n_0+1)} \\ &-\sum_{m=n_1+1}^{n_2}  z^{(n_0+1)m} A_0^{n_2+1-m} z^{j-n_2(n_0+1)}
\end{align*}
and where  $n_1,n_2 \ge 1$, $\varepsilon>0$ have to be determined in order to ensure that $B$ belongs to $\clmk$. 
Our first goal is to show that with this definition, $B$ is nilpotent -- more precisely that $B^{n_2+1}=0$.

\begin{fact}\label{fait1}
 For every $P\in\cc_{n_0} [z]$ and every $1\leq n\leq n_2$, we have
 \begin{align}
B^nP&=\sum_{k=0}^{\min(n,n_1)}\varepsilon^k z^{k(n_0+1)} A_0^{n-k} P +\varepsilon^{n_1+1} \sum_{k=n_1+1}^n z^{k(n_0+1)} A_0^{n-k} P. \label{eq1}
\end{align}
\end{fact}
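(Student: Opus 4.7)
The plan is to prove Fact 1.1 by induction on $n$. The base case $n=1$ is immediate: for $P\in\C_{n_0}[z]$, the first line of the definition of $B$ together with linearity gives $BP = A_0 P + \varepsilon z^{n_0+1}P$, which is exactly the right-hand side of (1.1) for $n=1$ (either interpreted as a one-term or two-term sum depending on whether $n_1\ge 1$).

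For the inductive step, assume (1.1) holds for some $n$ with $1\le n\le n_2-1$. The key structural observation is that $A_0=(1-\delta)T_{n_0}AT_{n_0}$ has range in $\C_{n_0}[z]$, hence every $Q_k\coloneqq A_0^{n-k}P$ lies in $\C_{n_0}[z]$, and consequently $z^{k(n_0+1)}Q_k$ is supported on the monomials $z^{j}$ with $j\in[k(n_0+1),\,k(n_0+1)+n_0]$. These index windows align perfectly with the case distinction in the definition of $B$:
\begin{itemize}
\item if $k=0$, then $j\in[0,n_0]$ falls in the first case, so $B(Q_0)=A_0 Q_0+\varepsilon z^{n_0+1}Q_0=A_0^{n+1}P+\varepsilon z^{n_0+1}A_0^{n}P$;
\item if $1\le k\le n_1$, then $j\in[n_0+1,(n_0+1)(n_1+1)-1]$ falls in the second case, so $B(z^{k(n_0+1)}Q_k)=\varepsilon z^{(k+1)(n_0+1)}Q_k$;
\item if $n_1+1\le k\le n\le n_2-1$, then $j\in[(n_0+1)(n_1+1),(n_0+1)n_2-1]$ falls in the third case, so $B(z^{k(n_0+1)}Q_k)=z^{(k+1)(n_0+1)}Q_k$.
\end{itemize}
Note that the assumption $n+1\le n_2$ is precisely what guarantees the fourth and fifth cases of the definition of $B$ (involving $D_{n_0,n_1,n_2}$ or vanishing) are never triggered here.

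Applying $B$ termwise to the inductive hypothesis, reindexing by $\ell=k+1$, and combining the resulting contributions, one obtains
\[
A_0^{n+1}P\;+\;\sum_{\ell=1}^{\min(n,n_1)+1}\varepsilon^{\ell}z^{\ell(n_0+1)}A_0^{n+1-\ell}P\;+\;\varepsilon^{n_1+1}\sum_{\ell=n_1+2}^{n+1}z^{\ell(n_0+1)}A_0^{n+1-\ell}P .
\]
The only delicate point is the handling of the boundary index $\ell=n_1+1$: when $n\ge n_1$ it arises in the first (geometric) block with its natural coefficient $\varepsilon^{n_1+1}$, and this is exactly the coefficient that the target formula for $n+1$ assigns to the $k=n_1+1$ term of the \emph{second} block. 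Thus moving that term to the second block converts the above expression into
\[
\sum_{k=0}^{\min(n+1,n_1)}\varepsilon^{k}z^{k(n_0+1)}A_0^{n+1-k}P\;+\;\varepsilon^{n_1+1}\sum_{k=n_1+1}^{n+1}z^{k(n_0+1)}A_0^{n+1-k}P,
\]
which is (1.1) with $n$ replaced by $n+1$.

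The main obstacle is therefore pure bookkeeping: checking that the ranges $[k(n_0+1),k(n_0+1)+n_0]$ fit cleanly into the three relevant pieces of the definition of $B$, and correctly splitting the cases $n<n_1$ (both sums shrink naturally, the second one remaining empty) versus $n\ge n_1$ (where the reindexing produces the boundary term that must be shifted between blocks). Once this is settled, the formula follows without further computation.
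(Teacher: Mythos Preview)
Your proof is correct and follows essentially the same approach as the paper's: both proceed by induction on $n$, identify the monomial windows $[k(n_0+1),(k+1)(n_0+1)-1]$ and match them to the appropriate case in the definition of $B$, then split into the cases $n<n_1$ and $n\ge n_1$ to handle the boundary term at $\ell=n_1+1$. The paper writes out the two cases more explicitly, but the substance is identical.
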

\begin{proof} Equality (\ref{eq1}) is true for $n=1$, since by definition $BP=A_0P+\varepsilon z^{n_0+1}P$. If we assume that (\ref{eq1}) is true for some $n\ge 1$, then, for every $1\leq k\leq n_1$, $z^{k(n_0+1)} A_0^{n-k} P$ is a sum of monomials whose degrees lie between $k(n_0+1)$ and $(k+1)(n_0+1)-1$; since $k(n_0+1)\geq n_0+1$ and $(k+1)(n_0+1)-1\leq (n_1+1)(n_0+1)-1$, it follows that
\begin{align*}
 B(z^{k(n_0+1)} A_0^{n-k}P)&=\varepsilon z^{(k+1)(n_0+1)} A_0^{n-k} P \text{ for } 1\leq k\leq n_1.
\end{align*}
Analogously, for $n_1+1\leq k\leq n_2-1$, $z^{k(n_0+1)}A_0^{n-k}P$ is a sum of monomials whose degrees lie between $k(n_0+1)$ and $(k+1)(n_0+1)-1$, with $k(n_0+1)\geq (n_1+1)(n_0+1)$ and $(k+1)(n_0+1)\leq n_2(n_0+1)-1$. Thus
\begin{align*}
 B(z^{k(n_0+1)} A_0^{n-k}P)&=z^{(k+1)(n_0+1)}A_0^{n-k}P\quad \text{ for every } n_1\leq k\leq n_2-1.
\end{align*}
We now consider separately two cases.
\par\smallskip
\emph{Case 1:} If $n<n_1$, we have
\begin{align*}
B^{n+1}P
&=BA_0^nP+\sum_{k=1}^n \varepsilon^k B\left(z^{k(n_0+1)} A_0^{n-k} P\right)\\
&= A_0^{n+1}P +\varepsilon z^{n_0+1} A_0^n P+\sum_{k=1}^n \varepsilon^{k+1} z^{(k+1)(n_0+1)}A_0^{n-k} P\\
&=  A_0^{n+1}P +\varepsilon z^{n_0+1} A_0^n P+\sum_{k=2}^{n+1} \varepsilon^{k} z^{k(n_0+1)}A_0^{n+1-k} P\\
&=\sum_{k=0}^{\min(n+1,n_1)}\varepsilon^k z^{k(n_0+1)} A_0^{n+1-k} P.
\end{align*}
Hence (\ref{eq1}) still holds  true at rank $n+1$ when $n<n_1$.
\par\smallskip
\emph{Case 2:} 
If $n_1\leq n<n_2$, we have
\begin{align*}
B^{n+1}P&= BA_0^n P + \sum_{k=1}^{n_1} \varepsilon^k B\left(z^{k(n_0+1)}A_0^{n-k} P\right)
 +\varepsilon^{n_1+1}\sum_{k=n_1+1}^n B\left(z^{k(n_0+1)}A_0^{n-k} P\right)\\
&=A_0^{n+1}P+\varepsilon z^{n_0+1} A_0^nP 
  +\sum_{k=1}^{n_1} \varepsilon^{k+1} z^{(k+1)(n_0+1)}A_0^{n-k} P\\
  &\qquad +\varepsilon^{n_1+1} \sum_{k=n_1+1}^{n}  z^{(k+1)(n_0+1)}A_0^{n-k} P.
\end{align*}
This gives that
\begin{align*}
B^{n+1}P&=A_0^{n+1}P+\varepsilon z^{n_0+1} A_0^nP 
  +\sum_{k=2}^{n_1+1} \varepsilon^{k} z^{k(n_0+1)}A_0^{n+1-k} P\\
 & \qquad +\varepsilon^{n_1+1} \sum_{k=n_1+2}^{n+1}  z^{k(n_0+1)}A_0^{n+1-k} P\\
&=A_0^{n+1}P+\varepsilon z^{n_0+1} A_0^nP 
  +\sum_{k=2}^{n_1} \varepsilon^{k} z^{k(n_0+1)}A_0^{n+1-k} P\\
  &\qquad +\varepsilon^{n_1+1} \sum_{k=n_1+1}^{n+1}  z^{k(n_0+1)}A_0^{n+1-k} P.
\end{align*}
Hence (\ref{eq1}) is still true at rank $n+1$ in this second case where $n_2>n\geq n_1$, which ends the proof of Fact \ref{fait1}.
\end{proof}
\par\smallskip
\begin{fact}\label{fait2}
 For every $P\in \cc_{n_0}[z]$, we have $B^{n_2+1}P=0$.
\end{fact}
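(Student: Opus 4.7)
The plan is to apply Fact~\ref{fait1} at $n=n_{2}$ to obtain an explicit expansion of $B^{n_{2}}P$, then to compute $B^{n_{2}+1}P=B(B^{n_{2}}P)$ term by term, using the piecewise definition of $B$ according to where the monomial degrees sit, and finally to observe that the definition of $D_{n_{0},n_{1},n_{2}}$ is precisely calibrated so that all surviving terms cancel in pairs.

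First I would pin down the combinatorial structure. Since $P\in\cc_{n_{0}}[z]$ and $A_{0}=(1-\delta)T_{n_{0}}AT_{n_{0}}$ has range contained in $\cc_{n_{0}}[z]$, each iterate $A_{0}^{n_{2}-k}P$ lies in $\cc_{n_{0}}[z]$. Consequently, for every $0\le k\le n_{2}$, the polynomial $z^{k(n_{0}+1)}A_{0}^{n_{2}-k}P$ is a sum of monomials of degrees lying in the interval $[k(n_{0}+1),(k+1)(n_{0}+1)-1]$. This ensures that the four pieces of the piecewise definition of $B$ apply cleanly, with no overlap and no leakage outside the prescribed ranges.

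Next I would split $B(B^{n_{2}}P)$ according to the value of $k$ in the expansion from Fact~\ref{fait1}. For $k=0$: $B(A_{0}^{n_{2}}P)=A_{0}^{n_{2}+1}P+\varepsilon z^{n_{0}+1}A_{0}^{n_{2}}P$. For $1\le k\le n_{1}$: the degrees sit in the second range, so $B(z^{k(n_{0}+1)}A_{0}^{n_{2}-k}P)=\varepsilon\, z^{(k+1)(n_{0}+1)}A_{0}^{n_{2}-k}P$. For $n_{1}+1\le k\le n_{2}-1$: the degrees sit in the third range, giving $B(z^{k(n_{0}+1)}A_{0}^{n_{2}-k}P)=z^{(k+1)(n_{0}+1)}A_{0}^{n_{2}-k}P$. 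For $k=n_{2}$: the degrees fall into the fourth range, so by linearity and the definition of $D_{n_{0},n_{1},n_{2}}$,
\[
B(z^{n_{2}(n_{0}+1)}P)=-\sum_{m=0}^{n_{1}}\frac{1}{\varepsilon^{n_{1}-m+1}}z^{(n_{0}+1)m}A_{0}^{n_{2}+1-m}P-\sum_{m=n_{1}+1}^{n_{2}}z^{(n_{0}+1)m}A_{0}^{n_{2}+1-m}P.
\]

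Then I would assemble $B^{n_{2}+1}P$, collecting the positive and negative contributions, and reindex via $m=k+1$ on the positive side. The $A_{0}^{n_{2}+1}P$ term from $k=0$ is cancelled by the $m=0$ term of the first negative sum. The $\varepsilon z^{n_{0}+1}A_{0}^{n_{2}}P$ term from $k=0$ matches the $m=1$ term of the first negative sum (after using $\varepsilon^{n_{1}+1}/\varepsilon^{n_{1}}=\varepsilon$). More generally, the prefactor $\varepsilon^{n_{1}+1}/\varepsilon^{n_{1}-m+1}=\varepsilon^{m}$ is exactly what is needed so that the reindexed positive contributions in the range $2\le m\le n_{1}+1$ cancel the first negative sum and the contributions in the range $n_{1}+2\le m\le n_{2}$ cancel the second negative sum. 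Hence $B^{n_{2}+1}P=0$.

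The only delicate point is the bookkeeping: one must be careful with the boundary indices ($m=n_{1}+1$ falls in both sides of the split, and $k=n_{2}$ requires using the fourth case of $B$). But this is purely mechanical, since the coefficients $1/\varepsilon^{n_{1}-m+1}$ in the definition of $D_{n_{0},n_{1},n_{2}}$ are designed precisely so that each term in the expansion of $B(B^{n_{2}}P)$ has exactly one positive and one negative counterpart.
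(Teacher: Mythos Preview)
Your proof is correct and follows essentially the same approach as the paper: apply Fact~\ref{fait1} at $n=n_{2}$, compute $B$ on each block $z^{k(n_{0}+1)}A_{0}^{n_{2}-k}P$ according to which piece of the definition of $B$ the degrees fall into, and then verify that the positive contributions (reindexed via $m=k+1$) cancel term by term against the two negative sums coming from $D_{n_{0},n_{1},n_{2}}$. The paper carries out the same reindexing explicitly, but your more compressed account of the cancellation is accurate.
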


\begin{proof} Fact \ref{fait1} applied to $n=n_2$ yields that
\begin{align*}
B^{n_2+1}P&=B\left(\sum_{k=0}^{n_1}\varepsilon^k z^{k(n_0+1)} A_0^{n_2-k} P +\varepsilon^{n_1+1} \sum_{k=n_1+1}^{n_2} z^{k(n_0+1)} A_0^{n_2-k} P\right)\\
&= A_0^{n_2+1} P +\varepsilon z^{n_0+1} A_0^{n_2}P+\sum_{k=1}^{n_1} \varepsilon^{k+1} z^{(k+1)(n_0+1)} A_0^{n_2-k} P\\
&\qquad +\varepsilon^{n_1+1} \sum_{k=n_1+1}^{n_2-1} z^{(k+1)(n_0+1)}A_0^{n_2-k}P
-\varepsilon^{n_1+1} \sum_{m=0}^{n_1}\frac1{\varepsilon^{n_1-m+1}} z^{m(n_0+1)}A_0^{n_2+1-m}P\\
&\qquad -\varepsilon^{n_1+1} \sum_{m=n_1+1}^{n_2} z^{m(n_0+1)}A_0^{n_2+1-m}P.
\end{align*}
Writing the sum
$$\varepsilon^{n_1+1} \sum_{m=0}^{n_1}\frac1{\varepsilon^{n_1-m+1}} z^{m(n_0+1)}A_0^{n_2+1-m}P$$
as 
$$A_0^{n_2+1}P+\sum_{m=0}^{n_1-1}\varepsilon^{m+1} z^{(m+1)(n_0+1)}A_0^{n_2-m}P,$$
and the sum 
$$\varepsilon^{n_1+1} \sum_{m=n_1+1}^{n_2} z^{m(n_0+1)}A_0^{n_2+1-m}P$$
as
$$\varepsilon^{n_1+1} z^{(n_1+1)(n_0+1)}A_0^{n_2-n_1}P+\varepsilon^{n_1+1} \sum_{m=n_1+1}^{n_2-1} z^{(m+1)(n_0+1)}A_0^{n_2-m}P,$$
we obtain that
$B^{n_2+1}P=0$,
which proves Fact \ref{fait2}.
\end{proof}
\par\smallskip
We are now ready to prove the next fact.

\begin{fact}\label{fait3}
We have $B^{n_2+1}=0$.
\end{fact}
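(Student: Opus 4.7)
The plan is to reduce the claim $B^{n_2+1}=0$ to the previous fact by a continuity-and-density argument, together with a case analysis on monomials. Since $B$ belongs to $\clmk$, it is continuous, so $B^{n_2+1}$ is continuous; as polynomials are dense in $\oc$, it is enough to check that $B^{n_2+1}z^j=0$ for every $j\geq 0$. I split according to the ranges appearing in the piecewise definition of $B$.

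If $j\geq (n_0+1)(n_2+1)$, then $Bz^j=0$ and there is nothing to prove. If $0\leq j\leq n_0$, then $z^j\in\cc_{n_0}[z]$ and Fact \ref{fait2} applies directly. The substantive case is $n_0+1\leq j\leq (n_0+1)(n_2+1)-1$, in which I write $j=(n_0+1)q+s$ with $1\leq q\leq n_2$ and $0\leq s\leq n_0$, and track the iterates of $B$ on $z^j$ through the ``chunks'' $\{z^{(n_0+1)m+t}:0\leq t\leq n_0\}$, $0\leq m\leq n_2$.

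Inspecting the definition of $B$ gives, for every $Q\in\cc_{n_0}[z]$,
\[
B\bigl(z^{(n_0+1)m}Q\bigr)=\begin{cases}\varepsilon\,z^{(n_0+1)(m+1)}Q & \text{if }1\leq m\leq n_1,\\ z^{(n_0+1)(m+1)}Q & \text{if }n_1+1\leq m\leq n_2-1,\\ D_{n_0,n_1,n_2}\bigl(z^{(n_0+1)n_2}Q\bigr) & \text{if }m=n_2,\end{cases}
\]
the last right-hand side being defined by linearity. An elementary induction on the chunk index then yields
\[
B^{n_2-q}\bigl(z^{(n_0+1)q+s}\bigr)=\varepsilon^{\max(n_1+1-q,\,0)}\,z^{(n_0+1)n_2+s},
\]
so that $B^{n_2+1-q}z^j$ is a scalar multiple of $D_{n_0,n_1,n_2}(z^{(n_0+1)n_2+s})$. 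Expanding the latter as a sum of terms $z^{(n_0+1)m}A_0^{n_2+1-m}z^s$ and computing $B^q$ on each summand via Fact \ref{fait1} combined with the chunk-shift rules above, the resulting linear combination telescopes to zero by exactly the same algebraic cancellation that was exploited in the proof of Fact \ref{fait2}.

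The main obstacle I anticipate is the bookkeeping in this final step: one has to track, across chunks, how the contributions from the different summands of $D_{n_0,n_1,n_2}(z^{(n_0+1)n_2+s})$ recombine after $q$ further iterations of $B$, and to see the cancellation from Fact \ref{fait2} reappear in a shifted form. A clean way to package the argument is to observe that the finite-dimensional subspace $\cc_{(n_0+1)(n_2+1)-1}[z]$ is invariant under $B$ and splits into the $n_2+1$ chunks defined above; on this invariant subspace $B$ is a block operator consisting of chunk-to-chunk shifts (scaled by $\varepsilon$ or $1$) together with the correction $D_{n_0,n_1,n_2}$ at chunk $n_2$, and the very purpose of that correction, already exploited in Fact \ref{fait2}, is to force the resulting block operator to be nilpotent of order at most $n_2+1$.
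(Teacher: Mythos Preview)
Your reduction to monomials and the chunk-shift formula $B^{n_2-q}(z^{(n_0+1)q+s})=\varepsilon^{\max(n_1+1-q,0)}z^{(n_0+1)n_2+s}$ are correct, but the final step---``the resulting linear combination telescopes to zero by exactly the same algebraic cancellation that was exploited in Fact \ref{fait2}''---is not yet a proof, and you yourself flag the bookkeeping as the obstacle. The difficulty is real: applying $B^q$ to the summands of $D_{n_0,n_1,n_2}(z^{(n_0+1)n_2+s})$, the chunk-$m$ terms with $m+q>n_2$ hit chunk $n_2$ again and spawn further $D$-terms, so the computation is recursive rather than a single telescoping pass.

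There is, however, a one-line observation that closes the gap along your route. Comparing the definition of $D_{n_0,n_1,n_2}$ with Fact \ref{fait1} applied to $P=A_0 z^s\in\cc_{n_0}[z]$ and $n=n_2$ gives the identity
\[
D_{n_0,n_1,n_2}\bigl(z^{(n_0+1)n_2+s}\bigr)=-\varepsilon^{-(n_1+1)}\,B^{n_2}(A_0 z^s).
\]
Fact \ref{fait2} then yields $B\bigl(D_{n_0,n_1,n_2}(z^{(n_0+1)n_2+s})\bigr)=-\varepsilon^{-(n_1+1)}B^{n_2+1}(A_0 z^s)=0$, so $B^2$ already kills chunk $n_2$. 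Combined with your chunk-shift formula this gives $B^{n_2+2-q}z^j=0$ for every monomial in chunk $q\ge 1$, which is more than enough.

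The paper takes a different and tidier path: it proves that the family $\{B^k z^i:0\le k\le n_2,\ 0\le i\le n_0\}$ is linearly independent, hence a basis of $\cc_{(n_0+1)(n_2+1)-1}[z]$. Linear independence is checked by applying successive powers of $B$ to a vanishing combination and reading off the top-chunk coefficients via Fact \ref{fait1}, using Fact \ref{fait2} to discard the higher-$k$ terms. Once this basis is in hand, every polynomial $P$ of degree at most $(n_0+1)(n_2+1)-1$ is a combination of the $B^k z^i$, and $B^{n_2+1}P=0$ follows immediately from Fact \ref{fait2}. Your approach, once completed with the identity above, yields a sharper pointwise statement (order of nilpotency $n_2+2-q$ on chunk $q$); the paper's argument avoids all chunk-by-chunk computation at the cost of a short linear-algebra detour.
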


\begin{proof} Let us first check that the family $(1,\ldots, z^{n_0}, B1,\ldots, Bz^{n_0},\ldots, B^{n_2}1,\ldots, B^{n_2}z^{n_0})$ is a basis of $\cc_{(n_0+1)(n_2+1)-1}[z]$. Suppose that $\sum_{k=0}^{n_2} \sum_{j=0}^{n_0} \lambda_{k,j} B^kz^j=0$ for some complex numbers $\lambda_{k,j}$. Applying $B^{n_2}$ to this equality, we obtain by Fact \ref{fait2} that $\sum_{j=0}^{n_0} \lambda_{0,j} B^{n_2}z^j=0$.  Fact \ref{fait1} then implies that
\begin{align}
 \sum_{j=0}^{n_0} \lambda_{0,j} \left( 
 \sum_{k=0}^{n_1}\varepsilon^k z^{k(n_0+1)} A_0^{n_2-k} z^j +\varepsilon^{n_1+1} \sum_{k=n_1+1}^{n_2} z^{k(n_0+1)} A_0^{n_2-k} z^j
 \right) &=0\label{eq2}
\end{align}
so that the two polynomials
\begin{align*}
 \sum_{j=0}^{n_0} \lambda_{0,j} \left( 
 \sum_{k=0}^{n_1}\varepsilon^k z^{k(n_0+1)} A_0^{n_2-k} z^j +\varepsilon^{n_1+1} \sum_{k=n_1+1}^{n_2-1} z^{k(n_0+1)} A_0^{n_2-k} z^j
 \right)
 \end{align*}
 and
\begin{align*}
- \varepsilon^{n_1+1}\sum_{j=0}^{n_0} \lambda_{0,j}   z^{n_2(n_0+1)}  z^j
\end{align*}
are equal.
For $0\leq k\leq n_2-1$ and $0\le j\le n_0$, the degree of $z^{k(n_0+1)} A_0^{n_2-k}z^j$ is less than $n_2(n_0+1)-1$ while the degree of $z^{n_2(n_0+1)+j}$ is strictly greater than $n_2(n_0+1)-1$. It follows that $\sum_{j=0}^{n_0} \lambda_{0,j}z^{n_2(n_0+1)+j}=0$, and so that $\lambda_{0,0}=\cdots=\lambda_{0,n_0}$.
\par\smallskip
Suppose now that $\lambda_{0,0}=\cdots=\lambda_{0,n_0}=\cdots=\lambda_{l,0}=\cdots=\lambda_{l,n_0}$ for some $0\le l<n_2$. We know that $\sum_{k=l+1}^{n_2} \sum_{j=0}^{n_0} \lambda_{k,j} B^kz^j=0$, and we apply $B^{n_2-(l+1)}$ to this equality. We then obtain as previously that $\lambda_{l+1,0}=\cdots=\lambda_{l+1,n_0}$. By induction, $\lambda_{j,k}=0$ for all $j$ and $k$, and  $(1,\ldots, z^{n_0}, B1,\ldots, Bz^{n_0},\ldots, B^{n_2}1,\ldots, B^{n_2}z^{n_0})$ is indeed a basis of $\cc_{(n_0+1)(n_2+1)-1}[z]$.
\par\smallskip
Now, writing any polynomial $P\in\cc_{(n_0+1)(n_2+1)-1}[z]$ as $P=\sum_{k=0}^{n_2} \sum_{j=0}^{n_0} \mu_{k,j} B^kz^j$, we obtain $B^{n_2+1} P =0$. This proves Fact \ref{fait3}.
\end{proof}

The last step in our proof of Proposition \ref{approximationalgebrehypercyclique} is the following Fact \ref{fait4}, which shows that the operator $B$ belongs to $\cv$ for suitably chosen $n_1$, $n_2$ and $\varepsilon>0$.

\begin{fact}\label{fait4}
 There exist integers $n_1$ and $n_2$ large enough and $\varepsilon>0$ small enough such that the operator $B$ belongs to $\cv$. 
\end{fact}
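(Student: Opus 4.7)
The statement requires choosing $n_1,n_2$ large and $\varepsilon$ small so that $B$ lies in $\cv$. This amounts to verifying two things: (A) the approximation $N_r(Bz^j-Az^j)<\varepsilon_0$ for $0\le j\le K$, and (B) the membership in $\clmk$, i.e.\ $N_j(Bz^i)\le M_j k_j^i$ for all $i,j\ge 1$. Condition (A) is straightforward: by revisiting the choice of $n_0,\delta$ we may assume $N_r(A_0z^j-Az^j)<\varepsilon_0/2$, and since $Bz^j-A_0z^j=\varepsilon z^{n_0+1+j}$ for $0\le j\le K$, it suffices to require $\varepsilon<\varepsilon_0/(2r^{n_0+1+K})$.

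The crucial preliminary estimate for (B) is the following: since $A_0$ leaves $\cc_{n_0}[z]$ invariant and $A_0\in\clmk$ gives $N_1(A_0 z^l)\le(1-\delta)M_1 k_1^{n_0}=:F$ for $l\le n_0$, iterating on the finite-dimensional space $(\cc_{n_0}[z],N_1)$ yields $N_1(A_0^n z^l)\le F^n$, and hence $N_j(A_0^n z^l)\le F^n j^{n_0}$ via the elementary bound $N_j(P)\le j^{n_0}N_1(P)$ on $\cc_{n_0}[z]$. With this, I verify (B) on each branch of the piecewise definition of $B$. In the first two branches ($0\le i<(n_0+1)(n_1+1)$), the inequality yields explicit positive upper bounds on $\varepsilon$, using $\inf_j M_j/j^{n_0+1}>0$ from (i). In the third branch ($(n_0+1)(n_1+1)\le i<(n_0+1)n_2$), the condition $j^{n_0+1+i}\le M_j k_j^i$ is automatic for those $j$ with $M_j\ge j^{n_0+1}$ (a cofinite set by (i)), and for the finitely many exceptions, (ii) translates it into a finite lower bound $n_1\ge n_1^\star$. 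The last branch ($i\ge(n_0+1)(n_2+1)$) is trivial.

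The delicate range is the fourth, $(n_0+1)n_2\le i\le(n_0+1)(n_2+1)-1$. Summing the terms of $D_{n_0,n_1,n_2}(z^i)$ using the preliminary estimate gives, for $\varepsilon\le 1$,
\[
N_j(Bz^i)\le \frac{C(n_2+1)F j^{n_0}}{\varepsilon^{n_1+1}}\,\max(F,j^{n_0+1})^{n_2},
\]
so the required inequality reduces, after dividing by $M_j k_j^{(n_0+1)n_2}$, to
\[
\frac{C(n_2+1)F j^{n_0}}{M_j\,\varepsilon^{n_1+1}}\,\Xi_j^{n_2}\le 1,\qquad \Xi_j:=\frac{\max(F,j^{n_0+1})}{k_j^{n_0+1}}.
\]
I split according to $j$. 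When $j^{n_0+1}\le F$ (only finitely many $j$), hypothesis (iii) gives $\Xi_j\le F/k_1^{n_0+1}=(1-\delta)M_1/k_1<1$, so $\Xi_j^{n_2}$ decays exponentially and each such constraint is killed for $n_2$ large. When $j^{n_0+1}>F$, $\Xi_j=(j/k_j)^{n_0+1}\le(j/(j+1))^{n_0+1}\le e^{-(n_0+1)/(j+1)}$ by (ii), and applying (i) with the exponent $\alpha=n_0+2$ gives $M_j\ge j^{n_0+2}$ for $j\ge j^\star$ (a finite threshold), hence $j^{n_0}/M_j\le 1/j^2$; a direct calculus argument then shows
\[
\sup_{j\ge j^\star}\frac{(n_2+1)}{j^2}\,e^{-(n_0+1)n_2/(j+1)}=O(1/n_2),
\]
the maximum occurring near $j\sim(n_0+1)n_2/2$. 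The finitely many remaining $j$ (with $F^{1/(n_0+1)}<j<j^\star$) each satisfy $\Xi_j<1$ and are handled individually.

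The order of choices is therefore: pick $n_1\ge n_1^\star$ (handling the third branch), then $\varepsilon$ small enough for the first two branches and for (A), and finally $n_2$ sufficiently large (depending on all prior parameters) to kill the factor $\Xi_j^{n_2}$ uniformly in $j$. The main difficulty is this last uniform control in the fourth range: even though $\Xi_j<1$ for every fixed $j$, the upper bound $\Xi_j\le(j/(j+1))^{n_0+1}$ approaches $1$ as $j\to\infty$, so one must exploit the full polynomial growth of $M_j$ granted by (i) — specifically with $\alpha=n_0+2$ — to win uniformly. Once these choices are made, $B$ belongs to $\cv$, completing the proof of Fact~\ref{fait4}.
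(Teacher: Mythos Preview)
Your proof is correct, and the overall structure---handling (A) trivially, then verifying $N_j(Bz^i)\le M_jk_j^i$ branch by branch, with the fourth range being the only subtle one---matches the paper. The key iteration bound $N_1(A_0^n z^l)\le F^n$ and the promotion $N_j(P)\le j^{n_0}N_1(P)$ on $\cc_{n_0}[z]$ are exactly what the paper uses as well.

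Where you genuinely diverge from the paper is in the treatment of the fourth range for large $j$. The paper first fixes $j_0$ with $M_1k_1^{n_0}/j_0^{n_0+1}<1$, so that for $j\ge j_0$ the sum $\sum_{m=0}^{n_2} j^{(n_0+1)m}F^{n_2-m+1}$ is dominated by its last term times the convergent geometric constant $c_0=\sum_m(M_1k_1^{n_0}/j_0^{n_0+1})^m$; this yields the bound $N_j(Bz^m)\le M_jk_j^m\cdot c_0\varepsilon^{-(n_1+1)}M_1k_1^{n_0}(j/k_j)^{n_2(n_0+1)}\cdot j^{-1}$, using only $M_j\ge j^{n_0+1}$. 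The paper then picks a further threshold $j_1$ so that the factor $j^{-1}$ alone kills the constant for $j\ge j_1$, \emph{uniformly in $n_2$}, and disposes of the finite window $j_0\le j<j_1$ by sending $n_2\to\infty$. You instead replace the geometric sum by the crude $(n_2+1)\max(F,j^{n_0+1})^{n_2}$, which loses a factor $(n_2+1)$; to recover, you invoke hypothesis~(i) with the larger exponent $\alpha=n_0+2$ to obtain $j^{n_0}/M_j\le j^{-2}$, and then carry out the calculus maximization of $(n_2+1)j^{-2}e^{-(n_0+1)n_2/(j+1)}$ over $j$, finding the peak near $j\sim (n_0+1)n_2/2$ with value $O(1/n_2)$. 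Both routes are valid: the paper's extracts more from the geometric structure of the sum and needs only $\alpha=n_0+1$, whereas yours is more direct and robust, trading the sharper series estimate for a stronger use of the super-polynomial growth of $M_j$.
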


\begin{proof}  Since $n_0\geq K$, we have $Bz^j=A_0 z^j+\varepsilon z^j$  for every $j=0,\ldots, K$, and thus
\begin{align*}
 N_r(Bz^j-Az^j)&\leq N_r(Bz^j-A_0z^j)+N_r(A_0z^j-Az^j)\\
 &\leq \varepsilon r^j + N_r(A_0z^j-Az^j).
\end{align*}
Since $A_0$ belongs to $\cv$, we have $ N_r(A_0z^j-Az^j)<\varepsilon_0$ for every $j=0,\ldots, K$. One can choose $\varepsilon>0$ small enough, so as to ensure that $N_r(Bz^j-Az^j)<\varepsilon_0$ also holds for every $j=0,\ldots, K$. Therefore, to demonstrate that $B$ belongs to $\cv$,  the main problem is to show that $B$ belongs to $\clmk$ for suitable choices of the integers $n_1$ and $n_2$ (which will have to be large) and of the real number $\varepsilon>0$ (which must be sufficiently small).
\par\smallskip
Since $\frac{M_j}{j^{n_0+1}} \rightarrow +\infty$ as $j\to +\infty$, there exists an integer $j_0\in\nn$ such that  $j^{n_0+1}\leq M_j$ for all $j\geq j_0$, and provided that $j_0$ is large enough, we also have $M_1 k_1^{n_0} \frac1{j_0^{n_0+1}}< 1$.
Once $j_0$ is fixed, we choose $n_1$, $n_2$ and $\varepsilon$ as follows:
\par\smallskip
\begin{enumerate}[(i)]
 \item \label{i} Since $k_j>j$ for all $j$, we can first choose an integer $n_1$ sufficiently large, so that  
 $$j_0^{n_0+1}\left(\frac{j}{k_j}\right)^{(n_1+1)(n_0+1)}\leq M_j\quad\textrm{ for every } 1\le j\le j_0.$$
 \item \label{ii} We next choose $\varepsilon>0$ satisfying the following two properties: $$0<\varepsilon\leq \delta\quad\textrm{ and }\quad\varepsilon j_0^{2n_0+1}\leq \delta \min_{1\leq j<j_0} M_j.$$
 \item \label{iii} Let $c_0\coloneqq\sum_{m=0}^{+\infty} \left(M_1k_1^{n_0}\frac1{j_0^{n_0+1}}\right)^m<+\infty$. We choose an integer $j_1>j_0$ such that $$c_0 \frac1{\varepsilon^{n_1+1}} M_1k_1^{n_0} \frac1{j_1}<1.$$
 \item \label{iv} Lastly, using  again the assumption that $k_j>j$ for all $j\ge 1$ and that $M_1<k_1$, we choose an integer $n_2$ sufficiently large, such that
 
 \begin{enumerate}[(a)]
  \item \label{iva} $c_0 \frac1{\varepsilon^{n_1+1}} M_1 k_1^{n_0} \left(\frac j{k_j}\right)^{n_2(n_0+1)}\frac1j<1$ for every $j_0\le j < j_1$;
  \item \label{ivb} $\frac{n_2+1}{\varepsilon^{n_1+1}} \left( \frac j{k_j}\right)^{n_2(n_0+1)} \left(\max_{1\le i\le  j_0} k_i\right)^{n_0} <1$ for every  $1\le j \le j_0$;
  \item \label{ivc} $\frac{n_2+1}{\varepsilon^{n_1+1}} k_1^{n_0}\left(\frac{M_1}{k_1}\right)^{n_2}<1$.
 \end{enumerate}
 
\end{enumerate}
\par\smallskip
For these choices of the parameters $n_1$, $n_2$ and $\varepsilon>0$, we will prove that $N_j(Bz^m)\leq M_j k_j^m$ for every integer $m$ with $0\leq m\leq (n_2+1)(n_0+1)-1$ and every $j\ge 1$.  Since $Bz^m=0$ for all $m\geq (n_2+1)(n_0+1)$, this will imply that $B$ belongs to $\clmk$. 

We fix $j\ge 1$, and separate the proof into four cases.
\par\medskip

\emph{Case 1}: $1\leq m \leq n_0$.
\par\smallskip
 We have $Bz^m=A_0 z^m+\varepsilon z^{n_0+1+m}$ so 
$N_j(Bz^m)\leq N_j(A_0 z^m)+\varepsilon N_j(z^{n_0+1+m})$. Notice that
 \begin{align*}
 N_j(A_0z^m)
 &=(1-\delta)N_j(T_{n_0} A T_{n_0}z^m)
 \leq (1-\delta)N_j(Az^m)
 \leq (1-\delta)M_j k_j^m
 \end{align*}
 from which we deduce that 
 \begin{align*}
  N_j(Bz^m)&\leq (1-\delta)M_j k_j^m + \varepsilon j^{n_0+1+m}.
 \end{align*}
 
If $j\geq j_0$, then  $j^{n_0+1}\leq M_j$ so $N_j(Bz^m)\leq (1-\delta+\varepsilon)M_j k_j^m$ and since $\delta\geq \varepsilon$, we get $N_j(Bz^m)\leq M_j k_j^m$.

If $1\le j<j_0$, $N_j(Bz^m)\leq (1-\delta)M_j k_j^m+\varepsilon j_0^{2n_0+1}$ and by \ref{ii} we also have that $N_j(Bz^m)\leq M_j k_j^m$.
\par\medskip
\emph{Case 2:}
$(n_1+1)(n_0+1)\leq m\leq n_2(n_0+1)-1$.
\par\smallskip
In this case we have $Bz^m=z^{m+n_0+1}$ so $N_j(Bz^m)=j^{m+n_0+1}$.

If $j\geq j_0$, we have  $j^{n_0+1}\leq M_j$ and $k_j>j$, so $N_j(Bz^m)\leq M_j k_j^m$.

If $1\le j<j_0$, 
\begin{align*}
N_j(Bz^m)&=j^{n_0+1}\left( \frac j{k_j}\right)^m k_j^m
\leq j_0^{n_0+1}\left( \frac j{k_j}\right)^{(n_1+1)(n_0+1)}  k_j^m
\leq M_j k_j^m
\end{align*}
where the last inequality follows from \ref{i}. 
\par\medskip
\emph{Case 3:}
$(n_0+1)\leq m\leq (n_1+1)(n_0+1)-1$.
\par\smallskip
For these values of $m$  we have $Bz^m=\varepsilon z^{m+n_0+1}$ so $N_j(Bz^m)=\varepsilon j^{m+n_0+1}$.

If $j\geq j_0$, we have  $j^{n_0+1}\leq M_j$, $k_j>j$ and $\varepsilon<1$, so $N_j(Bz^m)\leq M_j k_j^m$.

If $1\le j<j_0$, 
\begin{align*}
N_j(Bz^m)&\leq \varepsilon j_0^{n_0+1} j^m
\leq M_j k_j^m
\end{align*}
where the last inequality follows from \ref{ii} and from the fact that $j<k_j$. 
\par\medskip
\emph{Case 4:}
$n_2(n_0+1)\leq m\leq (n_2+1)(n_0+1)-1$.
\par\smallskip
We write $m$ as $m=n_2(n_0+1)+l$ with $0\leq l\leq n_0$. In this case we have that
\begin{align*}
Bz^m&=-\sum_{i=0}^{n_1} \frac1{\varepsilon^{n_1-i+1}} z^{(n_0+1)i} A_0 ^{n_2-i+1} z^l -\sum_{i=n_1+1}^{n_2} z^{(n_0+1)i} A_0^{n_2-i+1} z^l
\end{align*}
and since $0<\varepsilon<1$, it follows that
\begin{align*}
N_j(Bz^m)&\leq \frac1{\varepsilon^{n_1+1}} \sum_{i=0}^{n_2 } N_j\left( z^{(n_0+1)i} A_0 ^{n_2-i+1} z^l\right)
= \frac1{\varepsilon^{n_1+1}} \sum_{i=0}^{n_2 } j^{(n_0+1)i} N_j\left(A_0 ^{n_2-i+1} z^l\right).
\end{align*}
\par\medskip
{Next we will estimate} the norms of the iterates of $A_0$. The main difficulty with this estimate comes from the fact that given a polynomial $P$, the quantity $N_j(A_0P)$ can be controlled by $N_{k_j}(P)$ only, and not by $N_j(P)$.
\par\smallskip
We remark that  if $P$ is a polynomial of degree at most $d$, we have $N_j(P)\leq j^d N_1(P)$. Thus for any polynomial $P$ of degree at most $n_0$, we have  $N_1(A_0P)\leq M_1k_1^{n_0} N_1(P)$. It follows that
\begin{align}
N_1(A_0^nP)&\leq \left(M_1 k_1^{n_0}\right)^n N_1(P)\quad\textrm{ for every } n\ge 0. \label{eq3}
\end{align}
\par\smallskip
-- We consider first the case where $j\geq j_0$. For every $0\le i\le n_2$ and every $0\le l' \le n_0$, (\ref{eq3}) gives
\begin{align*}
 N_j\left(A_0^{n_2-i+1}z^{l'}\right)
 &\leq j^{n_0} N_1\left(A_0^{n_2-i+1}z^{l'}\right)\leq j^{n_0}\left( M_1k_1^{n_0}\right)^{n_2-i+1}.
\end{align*}
It then follows that 
\begin{align*}
N_j\left(Bz^m\right) 
&\leq \frac{1}{\varepsilon^{n_1+1}}\sum_{i=0}^{n_2} j^{(n_0+1)i} j^{n_0} (M_1k_1^{n_0})^{n_2-i+1}\\
&= \frac{j^{(n_0+1)(n_2+1)+n_0}}{\varepsilon^{n_1+1}} \sum_{i=0}^{n_2} \left( M_1k_1^{n_0}\frac1{j^{n_0+1}}\right)^{n_2-i+1}.
\end{align*}
By the definition  of $c_0$ given in \ref{iii},  we get
\begin{align*}
N_j\left(Bz^m\right) &\leq \frac{j^{(n_0+1)(n_2+1)+n_0}}{\varepsilon^{n_1+1}} c_0 M_1k_1^{n_0}\frac1{j^{n_0+1}}\cdot
\end{align*}
Since $k_j\geq 1$ and $m\geq n_2(n_0+1)$ it follows that 
\begin{align*}
N_j\left(Bz^m\right)&\leq M_jk_j^m c_0 \frac{1}{\varepsilon^{n_1+1}} M_1k_1^{n_0}
\left(\frac{j}{k_j}\right)^{n_2(n_0+1)} \frac{j^{n_0}}{M_j},
\end{align*}
and using the fact that $M_j\geq j^{n_0+1}$ (because $j\geq j_0$), it follows that
\begin{align*}
N_j\left(Bz^m\right)&\leq M_jk_j^m c_0 \frac{1}{\varepsilon^{n_1+1}} M_1k_1^{n_0}
\left(\frac{j}{k_j}\right)^{n_2(n_0+1)} \frac{1}{j}\cdot
\end{align*}

\par\smallskip
If $j\geq j_1$, using that $\frac{j}{k_j}\leq 1$ as well as property \ref{iii}, we get that
$
N_j\left(Bz^m\right) \leq M_j k_j^m.
$

\par\smallskip
If $j_0\leq j<j_1$, \ref{iv}-\ref{iva} gives the inequality $N_j\left(Bz^m\right) \leq M_j k_j^m.$

\par\smallskip
-- It now remains to consider the integers $j$ such that $1\leq j<j_0$. For all polynomials $P$ of degree at most $n_0$ we have 
\begin{align*}
N_j(A_0P)&\leq (1-\delta) M_j N_{k_j}(P)
\leq (1-\delta) M_j {k_j}^{n_0} N_1(P),
\end{align*}
from which it follows that
\begin{align*}
 N_j(Bz^m)
 &\leq \frac1{\varepsilon^{n_1+1}}\sum_{i=0}^{n_2} j^{(n_0+1)i}N_j\left(A_0^{n_2-i+1}z^l\right)\\
 &\leq \frac1{\varepsilon^{n_1+1}}\sum_{i=0}^{n_2} 
 j^{(n_0+1)i} (1-\delta)  M_j {k_j}^{n_0} N_1\left(A_0^{n_2-i}z^l\right).
\end{align*}
We use   $(\ref{eq3})$ in order to get $N_1\left(A_0^{n_2-i}z^l\right)\leq (M_1 k_1^{n_0})^{n_2-i}$ and 
 \begin{align*}
 N_j(Bz^m) 
 &\leq \frac1{\varepsilon^{n_1+1}} M_j {k_j}^{n_0}\sum_{i=0}^{n_2} 
 j^{(n_0+1)i} \left(M_1k_1^{n_0}\right)^{n_2-i}\\
 &=\frac1{\varepsilon^{n_1+1}} M_j {k_j}^{n_0} j^{n_2(n_0+1)}\sum_{i=0}^{n_2} 
  \left(M_1k_1^{n_0}\frac1{j^{n_0+1}}\right)^{n_2-i}.
 \end{align*}
If $M_1k_1^{n_0}\frac1{j^{n_0+1}}\leq 1$, we obtain
 \begin{align*}
 N_j(Bz^m) 
 &\leq\frac1{\varepsilon^{n_1+1}} M_j {k_j}^{n_0} j^{n_2(n_0+1)}(n_2+1)
 = M_j {k_j}^{m} \frac{n_2+1}{\varepsilon^{n_1+1}}  \frac{j^{n_2(n_0+1)}}{k_j^{n_2(n_0+1)+l-n_0}}\\
 &\leq M_j {k_j}^{m} \frac{n_2+1}{\varepsilon^{n_1+1}}  \left(\frac{j}{k_j}\right)^{n_2(n_0+1)} \left(\max_{1\le i\le  j_0} k_i\right)^{n_0}.
  \end{align*}
Then condition \ref{iv}-\ref{ivb}  implies that $N_j(Bz^m) \leq M_j k_j^{m}.$

\par\smallskip
In the case where $M_1k_1^{n_0}\frac1{j^{n_0+1}}\geq 1$, we have
 \begin{align*}
 N_j(Bz^m) 
 &\leq\frac1{\varepsilon^{n_1+1}} M_j {k_j}^{n_0} j^{n_2(n_0+1)}(n_2+1)\left(M_1k_1^{n_0} \frac1{j^{n_0+1}}\right)^{n_2}\\
 &= M_j {k_j}^{m} \frac{n_2+1}{\varepsilon^{n_1+1}}k_1^{n_0-l} \left(\frac{M_1}{k_1}\right)^{n_2}  \frac{k_1^{n_2(n_0+1)+l-n_0}}{k_j^{n_2(n_0+1)+l-n_0}}\\
 &\leq M_j {k_j}^{m} \frac{n_2+1}{\varepsilon^{n_1+1}}  \left(\frac{M_1}{k_1}\right)^{n_2} k_1^{n_0-l}
  \end{align*}
  where the last inequality comes from the fact that $k_j\geq k_1$.
Now \ref{iv}-\ref{ivc} gives that $N_j(Bz^m) \leq M_j k_j^{m}$. This shows  for every $j\ge 1$ and every $m\ge 0$ that $N_j(Bz^m) \leq M_j k_j^{m}$, and Fact \ref{fait4} is thus proved.
\end{proof}
Combining Facts \ref{fait3} and \ref{fait4} concludes the proof of Proposition \ref{approximationalgebrehypercyclique}.
\end{proof}

\textcolor{black}{
What we  actually need in the proof of Theorem \ref{maintheorem} is that operators of the form $B+\delta S_{n+1}$ are dense in $\clmk$, for $\delta\in(0,1)$ and $B\in\clmk$ nilpotent such that $B=T_nBT_n$. The following corollary shows that this is an easy consequence of Proposition \ref{approximationalgebrehypercyclique}. 
\begin{corollary}\label{corollaire}
 Let $(M_j)_{j\ge 1}$ be a sequence of positive real numbers, and let $(k_j)_{j\ge 1}$ be a sequence of positive integers such that 
 \begin{enumerate}[\normalfont (i)]
  \item for all $\alpha\ge 1$, $j^\alpha=o({M_j})$ as $j$ tends to infinity;
  \item $k_j>j$ for every $j\ge 1$;
  \item $M_1<k_1$ and $k_j\geq k_1$ for every $j\ge 1$.
 \end{enumerate}
Let $A$ be an operator belonging to $\clmk$ and let $\cv$ be an \emph{\sot}-neighborhood of $A$ in $\clmk$. Then there exist $B\in \clmk$, $\delta\in(0,1)$ and $n\ge 1$ such that $B=T_n B T_n$, $B^{n+1} =0$ and $B+\delta S_{n+1}$ belongs to $\cv$.
\end{corollary}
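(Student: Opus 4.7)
The plan is to combine Proposition \ref{approximationalgebrehypercyclique} with a convex-combination argument. First, I fix an open \sot-sub-neighborhood $\cv_0\subseteq\cv$ of $A$ and apply Proposition \ref{approximationalgebrehypercyclique} to $A$ with $\cv_0$ in place of $\cv$: this produces an integer $n\ge 1$ and a nilpotent operator $B_0\in \cv_0$ satisfying $B_0=T_nB_0T_n$ and $B_0^{n+1}=0$. Since $\cv_0$ is open and contains $B_0$, I also fix a smaller \sot-open neighborhood $\cw\subseteq\cv_0$ of $B_0$.

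For a parameter $\delta\in(0,1)$ to be chosen small, set $B\coloneqq(1-\delta)B_0$. Then $B$ inherits $B=T_nBT_n$ and $B^{n+1}=0$ from $B_0$, and $B$ lies in $\clmk$ since $\clmk$ is plainly stable under multiplication by any scalar in $[0,1]$ (a direct consequence of $N_j(Tx)\le M_jN_{k_j}(x)$). I now rewrite
\[
B+\delta S_{n+1}=(1-\delta)B_0+\delta S_{n+1}.
\]
Provided that $n$ has been chosen large enough for $S_{n+1}$ to lie in $\clmk$, the right-hand side is a convex combination of two members of $\clmk$, and the convexity of $\clmk$ (an immediate consequence of the triangle inequality applied to each semi-norm $N_j$) yields $B+\delta S_{n+1}\in\clmk$. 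Note that the integer $n$ in Proposition \ref{approximationalgebrehypercyclique} can be taken arbitrarily large by taking the auxiliary parameters $n_0$ and $n_2$ in its proof sufficiently large, so one may always arrange $S_{n+1}\in\clmk$ if this is possible at all under the given hypotheses.

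Finally, for every $f\in\oc$ one has $(B+\delta S_{n+1}-B_0)f=\delta(S_{n+1}f-B_0f)\to 0$ in $\oc$ as $\delta\to 0^+$, so $B+\delta S_{n+1}$ converges \sot\ to $B_0$. Since $\cw$ is \sot-open and $B_0\in\cw$, for $\delta$ small enough $B+\delta S_{n+1}\in\cw\subseteq\cv$, yielding the desired triple $(B,\delta,n)$. The only mildly technical step is verifying $S_{n+1}\in\clmk$ for $n$ large under the weaker hypotheses of the corollary: one must bound $N_j(S_{n+1}z^m)=\binom{m}{n+1}j^{m-n-1}$ by $M_jk_j^m$ uniformly in $m\ge n+1$, which is possible because $j/k_j<1$ makes the supremum over $m$ finite for each $j$, and the super-polynomial growth of $M_j$ together with condition (iii) absorbs the resulting constants once $n$ is taken large enough. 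I expect this routine bookkeeping to be the main, but essentially the only, obstacle.
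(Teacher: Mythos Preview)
Your proof is correct and follows exactly the paper's approach: apply Proposition~\ref{approximationalgebrehypercyclique} to obtain a nilpotent $B_0\in\cv$ with $B_0=T_nB_0T_n$, set $B=(1-\delta)B_0$, and observe that $B+\delta S_{n+1}=(1-\delta)B_0+\delta S_{n+1}$ is a convex combination in $\clmk$ which \sot-converges to $B_0$ as $\delta\to 0^+$.

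Your additional care about whether $S_{n+1}\in\clmk$ under the corollary's hypotheses alone is well placed: Proposition~\ref{proposition3} requires the stronger conditions $M_j\ge j+1$ and $k_j\ge j+2$, which the corollary does not assume. The paper's proof is silent on this point, presumably because the corollary is only invoked in the proof of Theorem~\ref{maintheorem}, whose hypotheses include those of Proposition~\ref{proposition3}. Your fix---increasing $n$---is legitimate (and simpler than going back into the proof of Proposition~\ref{approximationalgebrehypercyclique}: if $B_0=T_nB_0T_n$ and $B_0^{n+1}=0$, then the same holds with $n$ replaced by any $n'\ge n$, since $T_{n'}T_n=T_n$). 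The estimate you sketch for $S_{n+1}\in\clmk$ when $n$ is large does go through; the key is that $\sup_{m}\binom{m}{n+1}(j/k_j)^m\cdot j^{-(n+1)}$ is bounded by a quantity that tends to $0$ as $n\to\infty$ uniformly over the finitely many $j$ for which $M_j$ might be small, and for large $j$ condition~(i) takes over.
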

\begin{proof}
    We denote by $B'\in\cv$ a nilpotent operator with $B'=T_nB'T_n$ for some integer $n\geq 1$, given by Proposition \ref{approximationalgebrehypercyclique}. If $\delta\in(0,1)$ is sufficiently small, $(1-\delta)B' + \delta S_{n+1}$ belongs to $\cv$, $B=(1-\delta) B'$ belongs to $\clmk$ and it satisfies $B=T_n BT_n$ and $B^{n+1}=0$.
\end{proof}}


\section{Density of hypercyclic operators}\label{section3}


Our aim in this section is to prove the following theorem.
\begin{theorem}\label{maintheorembis}
Let $(M_j)_{j\ge 1}$ be a sequence of positive real numbers, and let $(k_j)_{j\ge 1}$ be a sequence of positive integers such that  $M_j\geq j+1$ and $k_j\geq j+2$ for every $j\ge 1$. Then the set of hypercyclic operators  is a dense $G_\delta$ subset of $(\clmk,\emph{\sot})$.
\end{theorem}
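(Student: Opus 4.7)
The proof splits into two parts: showing the set of hypercyclic operators is $G_\delta$ in $(\clmk,\sot)$, and showing it is dense. The $G_\delta$ part is routine via Birkhoff transitivity. Fix a countable basis $(V_k)_{k\geq 1}$ of open sets in $\oc$. Then the set of hypercyclic operators in $\clmk$ equals $\bigcap_{k,l\geq 1}\bigcup_{n\geq 0}\bigcup_{f\in V_k}\{T\in\clmk : T^n f\in V_l\}$, and each set $\{T\in\clmk : T^n f\in V_l\}$ is the preimage of the open set $V_l$ under the $\sot$-continuous map $T\mapsto T^n f$. This continuity follows by induction on $n$ from the uniform estimate $N_j(Tg)\leq M_j N_{k_j}(g)$ valid throughout $\clmk$: if $T_m\to T$ $\sot$ and $g_m\to g$ in $\oc$, then $N_r(T_m g_m-Tg)\leq M_r N_{k_r}(g_m-g)+N_r(T_m g-Tg)\to 0$.

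For density, Proposition \ref{approximationhypercyclique} reduces matters to showing that every operator of the form $T=B+\delta S_{n+1}$, with $n\geq 0$, $\delta\in(0,1)$, and $B\in\clmk$ satisfying $B=T_n B T_n$, is hypercyclic. I apply the Godefroy--Shapiro Criterion, which asks for dense linear spans of eigenvectors of $T$ associated to eigenvalues $\mu$ with $|\mu|<1$ and with $|\mu|>1$. Since $B=T_n B T_n$, the operator $B$ annihilates $z^k$ for $k>n$ and has range in $\cc_n[z]$; write $B=B'\circ T_n$, where $B':=B|_{\cc_n[z]}$ is a finite-dimensional endomorphism with finite spectrum $\sigma(B')\subset\cc$. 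Exploiting that $S_{n+1}(e^{\lambda z})=\lambda^{n+1}/(n+1)!\cdot e^{\lambda z}$ and $S_{n+1}|_{\cc_n[z]}=0$, I seek eigenvectors of $T$ of the form $f_\lambda=e^{\lambda z}-q_\lambda$ with $q_\lambda\in\cc_n[z]$ and eigenvalue $\mu_\lambda:=\delta\lambda^{n+1}/(n+1)!$. The equation $Tf_\lambda=\mu_\lambda f_\lambda$ then reduces to the linear equation $(B'-\mu_\lambda I)q_\lambda=B'(T_n e^{\lambda z})$ in $\cc_n[z]$, uniquely solvable for every $\lambda$ in the cofinite open set $\cc\setminus E$, where $E:=\mu^{-1}(\sigma(B'))$ is finite.

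The main obstacle is to verify that $\mathrm{span}\{f_\lambda : |\mu_\lambda|<1\}$ and $\mathrm{span}\{f_\lambda : |\mu_\lambda|>1\}$ are each dense in $\oc$. I argue by duality: if a continuous linear functional $\phi$ on $\oc$ vanishes on every $f_\lambda$ for $\lambda$ in some non-empty open set $W\subset\cc\setminus E$, then the entire function $g(\lambda):=\phi(e^{\lambda z})$ and the meromorphic function $\lambda\mapsto\phi(q_\lambda)$ (whose pole set is contained in $E$) coincide on $W$, hence on $\cc\setminus E$ by analytic continuation. The resolvent bound $\|(B'-\mu_\lambda I)^{-1}\|=O(1/|\mu_\lambda|)$ together with the polynomial growth of $T_n e^{\lambda z}$ yields $\|q_\lambda\|_{\cc_n[z]}=O(1/|\lambda|)$ as $|\lambda|\to\infty$, whence $g(\lambda)\to 0$ at infinity. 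Liouville's theorem then forces $g\equiv 0$; Taylor-expanding $e^{\lambda z}$ gives $\phi(z^k)=0$ for every $k\geq 0$, and the density of polynomials in $\oc$ yields $\phi=0$. Applying this with $W$ chosen inside the non-empty open sets $\{|\mu_\lambda|<1\}\setminus E$ and $\{|\mu_\lambda|>1\}\setminus E$ respectively delivers both hypotheses of Godefroy--Shapiro, so $T$ is hypercyclic, completing the density argument.
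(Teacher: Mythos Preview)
Your proof is correct. The $G_\delta$ argument and the overall density strategy (reduce via Proposition~\ref{approximationhypercyclique} to operators $B+\delta S_{n+1}$, then apply the Godefroy--Shapiro Criterion through eigenvectors and a duality argument) match the paper's.

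Where you diverge is in the execution of the eigenvector/density step. The paper computes the full $(n+1)$-dimensional eigenspace $\ker(B+\delta S_{n+1}-\lambda I)$ by solving an $(n+1)$st-order ODE and writing the solution as an integral of a matrix exponential (Lemma~\ref{lemme1}); it then extracts explicit Taylor coefficients (Lemma~\ref{calculVP}) and proves density (Lemma~\ref{lemme2}) via a compactly supported Borel measure representation of the functional, deriving a recursion for the moments and showing they vanish by a growth estimate. Your route bypasses all of this: you produce only the single eigenvector $f_\lambda=e^{\lambda z}-(B'-\mu_\lambda I)^{-1}B'(T_n e^{\lambda z})$ associated with each parameter $\lambda$, observe directly from the resolvent bound $\|(B'-\mu_\lambda I)^{-1}\|=O(|\lambda|^{-(n+1)})$ and $\|T_n e^{\lambda z}\|=O(|\lambda|^n)$ that $q_\lambda\to 0$, and kill the dual functional with Liouville's theorem. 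This is cleaner and more elementary for the purpose of Theorem~\ref{maintheorembis}. Interestingly, your $f_\lambda$ are precisely the vectors $\varepsilon_\lambda$ that the paper introduces later in Section~\ref{section4} for the hypercyclic-algebra argument; the paper's heavier machinery in Lemmas~\ref{lemme1}--\ref{lemme2} is reused there (via Lemma~\ref{lemme3}) to handle the full eigenspaces, which is why the authors develop it. For Theorem~\ref{maintheorembis} alone, your shortcut suffices.
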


The statement of Theorem \ref{maintheorembis} is not strictly necessary for the proof of Theorem \ref{maintheorem}. However, we will subsequently make use of many of the ingredients of its proof, in particular of the expression and the properties of the eigenvectors of operators
of the form $B+\delta S_{n+1}$, where $B=T_n B T_n$  (cf.\ Proposition \ref{approximationhypercyclique}).
\begin{proof}
Let us begin by proving that the set of hypercyclic operators is $G_\delta$ in $\clmk$. The argument for this is classical (cf.\ \cite{GMM1}*{Corollary 2.2}). 
{Let $(\cu_p)_{p\geq 1}$ be a countable basis of open sets of $\oc$.} The set of hypercyclic operators in $\clmk$ can be expressed as
{\begin{align*}
\cg=& \left\{\vphantom{T^N(u^n) }
T\in \clmk\ ; \ \forall p,q\geq 1\; \exists u\in\cu_p\;\exists N\geq 1 \text{ such that }  T^N u\in\cu_q
\right\}\\
=& \bigcap_{p,q\geq 1} \bigcup_{N\geq 1} \left\{\vphantom{T^N(u^n) }
T\in \clmk\ ; \ \exists u\in\cu_p \text{ such that }  T^N u\in\cu_q \right\}
.
\end{align*}
To establish that $\cg$ is a $G_\delta$ set, it suffices to prove that, for all $p,q$ and $N$, the set $$\cg_{p,q,N}\coloneqq\left\{\vphantom{T^N(u^n) }
T\in \clmk\ ; \ \exists u\in\cu_p \text{ such that }  T^N u\in\cu_q \right\}$$ is \sot-open. This is a consequence of the following claim:
\begin{claim}\label{claim}
For every integer $N\geq 1$ and every function $u\in\oc$, the map $\phi_{N,u}:T\mapsto T^Nu$ from $\clmk$ into $\oc$ is \emph{\sot}-continuous.
\end{claim}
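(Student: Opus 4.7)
The plan is to prove the claim by induction on $N$, with the key point being that membership in $\clmk$ supplies a uniform continuity bound $N_j(Sf) \le M_j N_{k_j}(f)$ that is shared by all operators in the space. This lets us upgrade pointwise (\sot) convergence through the iterates.

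For $N=1$, the map $\phi_{1,u}\colon T\mapsto Tu$ is \sot-continuous by the very definition of the \sot\ as the topology of pointwise convergence on $\oc$. Since $(\clmk,\sot)$ is a Polish space by Proposition \ref{propositionpolonais}, hence metrizable, continuity can be checked along sequences. So I fix $N\ge 1$, assume that $\phi_{N,u}$ is \sot-continuous for every $u\in\oc$, and let $(T_k)_k$ be a sequence in $\clmk$ converging \sot\ to some $T\in\clmk$. I need to show that $T_k^{N+1}u\to T^{N+1}u$ in $\oc$.

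The natural decomposition is
\[
T_k^{N+1}u - T^{N+1}u \;=\; T_k\bigl(T_k^{N}u - T^{N}u\bigr) \;+\; (T_k - T)(T^{N}u).
\]
For each fixed $j\ge 1$, the defining inequality of $\clmk$ applied to $T_k$ gives
\[
N_j\bigl(T_k(T_k^{N}u - T^{N}u)\bigr) \;\le\; M_j\, N_{k_j}\bigl(T_k^{N}u - T^{N}u\bigr),
\]
and the right-hand side tends to $0$ by the inductive hypothesis (which yields convergence of $T_k^N u$ to $T^N u$ in $\oc$, i.e.\ with respect to every semi-norm $N_{k_j}$). The second term, $(T_k-T)(T^{N}u)$, tends to $0$ in $\oc$ because $T^N u$ is a fixed element of $\oc$ and $T_k\xrightarrow{\sot}T$. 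Combining these two facts gives $N_j(T_k^{N+1}u - T^{N+1}u)\to 0$ for every $j\ge 1$, so $T_k^{N+1}u\to T^{N+1}u$ in $\oc$, completing the induction.

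I do not foresee a real obstacle here: the entire argument rests on the uniform operator bound built into the definition of $\clmk$, which plays exactly the role that the norm bound $\|T\|\le M$ plays in the Banach-space analogue of this type of statement. Once Claim \ref{claim} is established, the sets $\cg_{p,q,N}$ are \sot-open as preimages of open sets under $\phi_{N,\,\cdot\,}$-slicing arguments, and the $G_\delta$-ness of $\cg$ follows as indicated in the text.
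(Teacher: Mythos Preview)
Your proof is correct and rests on the same key mechanism as the paper's: the uniform bound $N_j(Sf)\le M_j N_{k_j}(f)$ shared by all $S\in\clmk$ allows one to push \sot-convergence through the iterates. The paper presents this via an explicit telescoping sum $T_n^{N}u-T^{N}u=\sum_{l=1}^{N}T_n^{l-1}(T_n-T)T^{N-l}u$ and bounds each term with an iterated product of the constants $M_{\kappa^{m}(j)}$, whereas your induction peels off one layer at a time and thereby avoids writing out those products; the two arguments are equivalent repackagings of the same idea.
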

We take the claim for granted for the moment. Let $T_0$ belong to $\cg_{p,q,N}$. Choose $u\in\cu_p$ such that $T_0^N u$ belongs to $\cu_q$. Then $\phi^{-1}_{N,u}(\cu_q)$ is an \sot-open set contained in $\cg_{p,q,N}$ containing $T_0$. Therefore $\cg_{p,q,N}$ is indeed \sot-open, and $\cg$ is an \sot-$G_\delta$ set, provided the claim holds.

\begin{proof}[Proof of Claim \ref{claim}]
Let $(T_n)$ be a sequence of elements of $\clmk$ that \sot-converges to $T$.
For every $j\ge 0$, define $\kappa(j) \coloneqq k_j$. Note that for all $S\in\clmk$,  all $l\geq 1$, all $f\in\oc$ and all $j\geq 1$, we have
\begin{align}
    N_j(S^lf)&\leq \left(\prod_{m=1}^{l} M_{\kappa^{m-1}(j)}\right) N_{\kappa^l(j)}(f), \label{eqnouvelle}
\end{align}
where $\kappa^m(j)=\kappa\circ\cdots \circ \kappa \,(j)$, $j\ge 0$, is the composition of the function $\kappa$ with itself $m$ times.
Now we express $\phi_{N,u}(T_n)-\phi_{N,u}(T)$ as follows:
\begin{align*}
    \phi_{N,u}(T_n)-\phi_{N,u}(T)
    = \sum_{l=1}^N \left(T_n^lT^{N-l} u - T_n^{l-1} T^{N-l+1} u\right)
    =\sum_{l=1}^N T_n^{l-1} (T_n-T) T^{N-l} u.
\end{align*}
Thus, by (\ref{eqnouvelle}), we have for all $j\ge 1$ that
\begin{align*}
    N_j\left(\phi_{N,u}(T_n)-\phi_{N,u}(T)\right)
    \le &\sum_{l=1}^N N_j\left(T_n^{l-1} (T_n-T) T^{N-l} u\right)\\
    \le & \sum_{l=1}^N \left(\prod_{m=1}^{l-1} M_{\kappa^{m-1}(j)}\right) N_{\kappa^{l-1}(j)}\left((T_n-T)T^{N-l}u\right).
\end{align*}
Since $N\ge 1$ and $u\in\oc$ are fixed, and since the sequence $(T_n)_n$ \sot-converges to $T$, we have $ N_{\kappa^{l-1}(j)}\left((T_n-T)T^{N-l}u\right)\longrightarrow 0$ as $n\to +\infty$ for every $l=1,\ldots, N$. Thus $\ N_j\left(\phi_{N,u}(T_n)-\phi_{N,u}(T)\right) \longrightarrow 0$, proving the claim.
\end{proof}
}

\par\smallskip
Next we prove that hypercyclic operators are \sot-dense in $\clmk$. To this end, by Proposition \ref{approximationhypercyclique} it suffices to
prove that the operators $B+\delta S_{n+1}$, with $n\ge 0$ and $B=T_n B T_n$, are hypercyclic. 

The general strategy of the proof is the following: we will show (cf.\ Lemma \ref{lemme2} below) that if a subset $\ca$ of $\cc$ has an accumulation point in $\cc$, then the linear span of entire functions $f$ which satisfy $(B+\delta S_{n+1})f=\lambda f$ for some $\lambda \in\ca$ is dense in $\oc$. The Godefroy-Shapiro Criterion (cf.\ for instance \cite{GroPer}*{Theorem 3.1}) will then imply that $B+\delta S_{n+1}$ is hypercyclic, and thus Theorem \ref{maintheorembis} will follow.
\par\smallskip
Our first task is to describe the eigenvectors of $B+\delta S_{n+1}$. For $n\ge 0$, we denote by $\Pi_{n+1} :\cc^{n+1}\to \cc$ the canonical projection defined by $\Pi_{n+1}(z_0,\ldots, z_n)=z_n$ for every $(z_0,\ldots, z_n)\in\cc^{n+1}$. For $\lambda\in\cc$, let $M_\lambda$ be the $(n+1)$-square matrix defined as
\begin{align*}
 M_\lambda&=
 \begin{pmatrix}
  0     &\cdots &\cdots &0     &\frac{(n+1)!}{\delta} \lambda\\
  1     &\ddots &       &0     &0\\
  0     &\ddots &\ddots &      &\vdots\\
  \vdots&\ddots & \ddots&\ddots&\vdots\\
  0     &\cdots &0      &1      &0
   \end{pmatrix}.
 \end{align*}

We begin by proving the following lemma.
 
\begin{lemma}\label{lemme1}
Let $B$ be a continuous linear operator on $\oc$ and let $\delta>0$. Suppose that there exists $n\ge 0$ such that $B=T_n B T_n$.
Then for every polynomial $P\in\cc_n[z]$ and every $\lambda\in\cc$, the entire function $f_{\lambda, P}$ defined by 
\begin{align*}
 f_{\lambda,P}(z)=
 &P(z)+\Pi_{n+1}\left(
 \int_{[0,z]} \frac{(n+1)!}{\delta} \exp((z-t)M_\lambda) 
 \begin{pmatrix}
    (-B+\lambda I)P(t),0,\ldots,0
    \end{pmatrix}
  \mathrm{d}t
 \right)
\end{align*}
is the only eigenvector of $B+\delta S_{n+1}$ associated to the eigenvalue $\lambda$ such that $T_n f_{\lambda, P}=P$. In particular, $\dim\ker(B+\delta S_{n+1}-\lambda I)=n+1$.
\end{lemma}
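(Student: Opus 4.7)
The strategy I would pursue is to translate the eigenvalue equation $(B+\delta S_{n+1})f = \lambda f$ into a linear ordinary differential equation of order $n+1$ for $f$, and then solve it explicitly using the exponential of the companion-type matrix $M_\lambda$ and Duhamel's variation of constants. The key input is the hypothesis $B = T_n B T_n$: for any entire $f$, one has $Bf = T_n B T_n f = B(T_n f)$, and if we set $P \coloneqq T_n f \in \cc_n[z]$ then $BP$ itself lies in $\cc_n[z]$ (hence $T_n(BP) = BP$). Consequently the eigenvalue equation is equivalent to
\[
\frac{\delta}{(n+1)!}\, f^{(n+1)} = \lambda f - BP,
\]
that is, $f^{(n+1)} = \frac{(n+1)!}{\delta}(\lambda f - BP)$. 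The function $h \coloneqq f - P$ then satisfies, using $P^{(n+1)} = 0$,
\[
h^{(n+1)} - \tfrac{(n+1)!}{\delta}\lambda h = \tfrac{(n+1)!}{\delta}(\lambda I - B)P,
\]
and the requirement $T_n f = P$ is equivalent to the $n+1$ initial conditions $h(0) = h'(0) = \cdots = h^{(n)}(0) = 0$.

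Next I would recast this as a first-order $(n+1)$-dimensional linear system. Setting $Y(z) \coloneqq (h^{(n)}(z), h^{(n-1)}(z), \ldots, h'(z), h(z))^T$, the shift relations $Y_i' = Y_{i-1}$ for $i=1,\ldots,n$ are tautological, while the top row gives $Y_0' = h^{(n+1)} = \frac{(n+1)!}{\delta}\lambda Y_n + \frac{(n+1)!}{\delta}(\lambda I - B)P$. Reading off the definition of $M_\lambda$ in the statement (whose only nonzero entries are $(M_\lambda)_{0,n} = \frac{(n+1)!}{\delta}\lambda$ and $(M_\lambda)_{i,i-1} = 1$ for $i=1,\ldots,n$), one sees that this is exactly
\[
Y'(z) = M_\lambda\, Y(z) + F(z), \qquad F(t) \coloneqq \tfrac{(n+1)!}{\delta}\bigl((\lambda I - B)P(t),\, 0,\, \ldots,\, 0\bigr)^{T},
\]
with initial condition $Y(0) = 0$. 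Since $M_\lambda$ has constant coefficients and $F$ is entire in $t$, the Cauchy problem admits a unique entire solution given by the Duhamel formula
\[
Y(z) = \int_{[0,z]} \exp((z-t)M_\lambda)\, F(t)\, \mathrm{d}t.
\]
Extracting the last coordinate via $\Pi_{n+1}$ and adding $P$ back recovers exactly the formula defining $f_{\lambda,P}$; in particular $f_{\lambda,P}$ is entire, satisfies $T_n f_{\lambda,P} = P$ (because $T_n h = 0$ by the zero initial conditions), and solves $(B+\delta S_{n+1})f_{\lambda,P} = \lambda f_{\lambda,P}$.

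Conversely, any eigenvector $f$ associated to $\lambda$ with $T_n f = P$ forces $h \coloneqq f - P$ to solve the same Cauchy problem, hence $f = f_{\lambda,P}$ by uniqueness of the ODE solution; this yields the uniqueness statement. For the dimension count, the map $\Phi\colon \cc_n[z] \to \ker(B+\delta S_{n+1} - \lambda I)$, $P \mapsto f_{\lambda,P}$, is $\cc$-linear (from linearity of the construction), injective because $T_n \circ \Phi = \mathrm{Id}$, and surjective because any eigenvector $g$ equals $f_{\lambda,T_n g}$ by the uniqueness just proved. Since $\dim \cc_n[z] = n+1$, we conclude $\dim \ker(B+\delta S_{n+1} - \lambda I) = n+1$. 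The main bookkeeping hurdle, I expect, will be verifying carefully that the ordering $Y = (h^{(n)},\ldots,h)^T$ pairs correctly with the layout of $M_\lambda$ so that $\Pi_{n+1}$ picks off $h$ itself and so that the scalar $\frac{(n+1)!}{\delta}$ is consistently placed; apart from this, the proof reduces to a direct application of the Picard--Lindel\"of theorem for holomorphic linear systems.
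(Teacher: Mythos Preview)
Your proposal is correct and follows essentially the same approach as the paper: reduce the eigenvalue equation to the scalar ODE $h^{(n+1)} - \tfrac{(n+1)!}{\delta}\lambda h = \tfrac{(n+1)!}{\delta}(\lambda I - B)P$ with vanishing initial data, convert to the first-order system $Y' = M_\lambda Y + F$ with $Y=(h^{(n)},\dots,h)$, and solve via the matrix exponential. Your explicit isomorphism $\Phi\colon P\mapsto f_{\lambda,P}$ for the dimension count is a minor elaboration of what the paper leaves implicit, but otherwise the two arguments are the same.
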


\begin{proof}[Proof of Lemma \ref{lemme1}] Let $f$ belong to $\oc$. We set $P=T_nf$ and $g=f-P$. Since $B=T_n B T_n$ and  $T_ng=0$, we have that $Bg=0$. Therefore $(B+\delta S_{n+1})f=\lambda f$ if and only if $BP+\delta S_{n+1} g=\lambda g+\lambda P$. 

Thus $g$ is a solution of the ordinary differential equation 
$$(E) :\begin{cases}
   y^{n+1}-\lambda \frac{(n+1)!}\delta y=\frac{(n+1)!}\delta ( \lambda I -B)P\\
   y^{(n)}(0)=\cdots=y(0)=0.
  \end{cases}$$
\par\smallskip
  Hence there exists a unique eigenvector $f_{\lambda,P}$ of $B+\delta S_{n+1}$ associated to the eigenvalue $\lambda$ such that $T_n f_{\lambda, P}=P$. It has the form $f_{\lambda,P}=P+g$, where $g$ is the unique solution of $(E)$.
\par\smallskip
As usual, in order to solve the differential equation $(E)$, one sets $Y=(y^{(n)},\ldots, y)$. Then $y$ is a solution of $(E)$ if and only if $Y$ is a solution of the system
$$(E') :\begin{cases}
   Y'= M_\lambda Y+ \frac{(n+1)!}{\delta} 
   \begin{pmatrix}(-B+\lambda I)P(t),0,\ldots,0\end{pmatrix}\\
   Y(0)=0.
   \end{cases}$$
The unique solution of $(E')$ is given by
$$Y(z)=
 \int_{[0,z]} \frac{(n+1)!}{\delta} \exp((z-t)M_\lambda) 
 \begin{pmatrix}
    (-B+\lambda I)P(t),0,\ldots,0
    \end{pmatrix}
  \mathrm{d}t$$
which yields the expression of $f_{\lambda,P}$ as given in the statement of Lemma \ref{lemme1}.
\end{proof}

We will need a more explicit expression of $f_{\lambda,P}$ when $P$ describes the canonical basis $P_0,\ldots, P_n$ of $\cc_n[z]$, with $P_l(z)=z^l$, $l=0,\ldots, n$. For $i,j=0,\ldots, n$, let $a_{ij}\in\cc$ be such that
$$Bz^j=\sum_{i=0}^n a_{ij}z^i\quad \textrm{ for every } 0\le j\le n.$$ 
We also set, for $j=0,\ldots, n$ and $l\ge 1$,  $$\alpha^{(j)}_l=\sum_{m=0}^j \binom{j}{m} \frac{(-1)^m}{(n+1)l+m}\quad\textrm{ and }\quad\alpha^{(j)}_0=1.$$

\begin{lemma}\label{calculVP}
 Let $B$ be a continuous linear operator on $\oc$ and let $\delta>0$. Suppose that there exists $n\ge 0$ such that $B=T_n B T_n$.
  Then we have for every $i=0,\ldots, n$
  {
 \begin{align*}
  f_{\lambda,P_i}(z)=z^i+
  \sum_{l=0}^{+\infty} \left( \left(\frac{(n+1)!}{\delta} \right)^{l+1} \right. 
  \frac{1}{\left((n+1)l+n\right)!} 
  \left( \vphantom{\sum_{j=0}^n}\right. &\lambda^{l+1}\alpha^{(i)}_{l+1}z^{i+(n+1)(l+1)} \\
  &\;\left.\left.-\lambda^l \sum_{j=0}^n a_{ji} z^{(n+1)(l+1)+j}\alpha_{l+1}^{(j)}\right)
  \right).
 \end{align*}
}
\end{lemma}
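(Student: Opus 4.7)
The plan is to unpack the formula from Lemma \ref{lemme1} with $P=P_i$ by computing the matrix exponential $\exp(s M_\lambda)$ applied to $e_1=(1,0,\dots,0)^\top$ explicitly, and then to perform the resulting polynomial integrals term by term. Throughout, set $c=\frac{(n+1)!}{\delta}$.

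\textbf{Step 1: exponential of the companion-like matrix.} The matrix $M_\lambda$ is constructed so that it cyclically permutes the canonical basis vectors $e_1,\dots,e_{n+1}$ up to a scalar twist: $M_\lambda e_k=e_{k+1}$ for $1\le k\le n$, and $M_\lambda e_{n+1}=c\lambda\,e_1$. Consequently $M_\lambda^{\,n+1}=c\lambda\,I$. Iterating, $M_\lambda^{\,k}e_1=(c\lambda)^{l}e_{n+1}$ precisely when $k=n+(n+1)l$ with $l\ge 0$, and otherwise the last coordinate vanishes. I would therefore establish the identity
\[
\Pi_{n+1}\bigl(\exp(sM_\lambda)e_1\bigr)=\sum_{l\ge 0}\frac{(c\lambda)^{l}}{(n+(n+1)l)!}\,s^{\,n+(n+1)l}\,\cdot
\]

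\textbf{Step 2: reduce to scalar integrals.} Specializing Lemma \ref{lemme1} to $P=P_i$ gives $(-B+\lambda I)P_i(t)=\lambda\,t^{i}-\sum_{j=0}^{n}a_{ji}\,t^{j}$, so applying Step 1 with $s=z-t$ yields
\[
f_{\lambda,P_i}(z)=z^{i}+c\int_{0}^{z}\!\!\Bigl(\sum_{l\ge 0}\tfrac{(c\lambda)^{l}}{(n+(n+1)l)!}(z-t)^{n+(n+1)l}\Bigr)\Bigl(\lambda t^{i}-\sum_{j=0}^{n}a_{ji}t^{j}\Bigr)\mathrm{d}t.
\]
The whole task thus boils down to evaluating $I_{N,k}(z):=\int_0^z (z-t)^{N}t^{k}\,\mathrm{d}t$ for $N=n+(n+1)l$ and $k\in\{i\}\cup\{0,\dots,n\}$, and matching the outcome with the constants $\alpha_{l+1}^{(k)}$.

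\textbf{Step 3: the key identification.} Rather than expanding $(z-t)^{N}$, I would use the ``dual'' binomial trick $t^{k}=(z-(z-t))^{k}=\sum_{m=0}^{k}\binom{k}{m}(-1)^{m}z^{k-m}(z-t)^{m}$, which (because $k\le n$) produces only $k+1$ terms. Then
\[
I_{N,k}(z)=\sum_{m=0}^{k}\binom{k}{m}(-1)^{m}z^{k-m}\int_0^{z}(z-t)^{N+m}\,\mathrm{d}t=z^{N+k+1}\sum_{m=0}^{k}\binom{k}{m}\frac{(-1)^{m}}{N+m+1}.
\]
With $N+m+1=(n+1)(l+1)+m$, the inner sum is exactly $\alpha_{l+1}^{(k)}$, and $N+k+1=(n+1)(l+1)+k$. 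Substituting back collects the $\lambda^{l+1}\alpha^{(i)}_{l+1}z^{i+(n+1)(l+1)}$ contribution from the $\lambda t^i$ term and the $-\lambda^{l}a_{ji}\alpha^{(j)}_{l+1}z^{j+(n+1)(l+1)}$ contributions from the $-\sum_j a_{ji}t^j$ term, multiplied by the prefactor $c^{l+1}/((n+1)l+n)!=(\tfrac{(n+1)!}{\delta})^{l+1}/((n+1)l+n)!$, giving precisely the announced series.

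\textbf{Main obstacle.} There is no deep difficulty; the only subtle point is keeping the shift conventions consistent so that $\alpha_{l+1}^{(k)}$ rather than $\alpha_{l}^{(k)}$ appears. This amounts to the arithmetic identity $n+(n+1)l+m+1=(n+1)(l+1)+m$, together with using the expansion of $t^{k}=(z-(z-t))^{k}$ rather than of $(z-t)^{N}$ (the latter would produce sums up to $N$ rather than up to $k$, which do not match the claimed finite sums defining $\alpha^{(k)}_{l+1}$). Termwise convergence of the series in $\oc$ is automatic from the factor $((n+1)l+n)!$ in the denominator, so exchange of sum and integral is justified by uniform convergence on compact sets.
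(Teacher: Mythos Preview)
Your proof is correct and follows essentially the same route as the paper: both reduce the formula of Lemma~\ref{lemme1} to the integral
\[
f_{\lambda,P_i}(z)=z^i+\int_{0}^{z}\sum_{l\ge 0}\frac{(z-t)^{(n+1)l+n}}{((n+1)l+n)!}\Bigl(\tfrac{(n+1)!}{\delta}\Bigr)^{l+1}\lambda^{l}\Bigl(\lambda t^{i}-\sum_{j=0}^{n}a_{ji}t^{j}\Bigr)\,\mathrm{d}t,
\]
and then evaluate it via the expansion $t^{k}=(z-(z-t))^{k}$, recognising the resulting finite sums as $\alpha_{l+1}^{(k)}$. The only difference is your Step~1: the paper introduces auxiliary matrices $J^{(j)}$, $N^{(j)}$ and computes the full power $M_\lambda^{(n+1)l+j}$ and hence the full matrix exponential, whereas you exploit directly that $M_\lambda^{\,n+1}=c\lambda\,I$ and that only the last coordinate of $M_\lambda^{k}e_1$ is needed, which pinpoints the relevant scalar series $\Pi_{n+1}(\exp(sM_\lambda)e_1)$ without detour. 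This is a modest but genuine simplification of the bookkeeping; from that point on the two arguments coincide.
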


\begin{proof}[Proof of Lemma \ref{calculVP}] We define the matrices $J^{(j)}=\left(J_{m,l}^{(j)}\right)_{1\leq m,l\leq n+1}$, for $j=0,\ldots, n,$ by setting
\begin{align*}
J_{m,l}^{(j)}=
\begin{cases}
 1, &\text{for } m=j+1,\ldots, n+1 \text{ and } l=m-j\\
 0, &\text{otherwise}
\end{cases}
\end{align*}
and  $N^{(j)}=\left(N_{m,l}^{(j)}\right)_{1\leq m,l\leq n+1}$, for $j=1,\ldots, n,$ by setting
\begin{align*} 
N_{m,l}^{(j)}=
\begin{cases}
 1, &\text{for } m=1,\ldots, n+1-j \text{ and } l=j+m\\
 0, &\text{otherwise}
\end{cases}
\end{align*}
with  $N^{(n+1)}=0$.

We then have, for every $j=0,\ldots, n$ and every $l\ge 0$ that
$$M_\lambda^{(n+1)l+j}=\left(\lambda\frac{(n+1)!}{\delta}\right)^l \left(J^{(j)} + \lambda\frac{(n+1)!}{\delta} N^{(n+1-j)}\right),$$
which implies that
\begin{align*}
 &\exp\left((z-t)M_\lambda\right)=\sum_{l=0}^{+\infty}\left( \sum_{j=0}^n\frac{(z-t)^{(n+1)l+j}}{((n+1)l+j)!}
 \left(\lambda\frac{(n+1)!}{\delta}\right)^l \left(J^{(j)} + \lambda\frac{(n+1)!}{\delta} N^{(n+1-j)}\right)
 \right).
\end{align*}
It then follows from Lemma \ref{lemme1} that
\begin{align*}
 f_{\lambda,P_i}(z)&=z^i+\int_{[0,z]} \sum_{l=0}^{+\infty} 
 \frac{(z-t)^{(n+1)l+n}}{((n+1)l+n)!}\left(\frac{(n+1)!}{\delta}\right)^{l+1}\lambda^l\left(\lambda t^i-\sum_{j=0}^n a_{ji}t^j\right)\mathrm{d}t.
\end{align*} 
In order to compute the above integral, we decompose $t^j$ as $t^j=\sum_{m=0}^j
\binom{j}{m}z^{j-m}(t-z)^{m}$ and thus obtain that {the integral in the expression above is equal to}
{
\begin{align*}
\int_{[0,z]} \sum_{l=0}^{+\infty} \left(
 \frac{(-1)^{(n+1)l+n}}{((n+1)l+n)!}\left(\frac{(n+1)!}{\delta}\right)^{l+1} \right. &
  \lambda^l 
  \left(\lambda \sum_{m=0}^i\binom {i}{m} z^{i-m} (t-z)^{(n+1)l+n+m}
  \vphantom{-\sum_{j=0}^n a_{ji}\sum_{m=0}^j \binom jm {z^{j-m}(t-z)^{(n+1)l+n+m}}}\right.\\
  &\left.\left.
  \;\; -\sum_{j=0}^n a_{ji}\sum_{m=0}^j \binom jm {z^{j-m}(t-z)^{(n+1)l+n+m}}\right)\right)\mathrm{d}t\\
  =\sum_{l=0}^{+\infty} \left(
 \frac{(-1)^{(n+1)(l+1)}}{((n+1)l+n)!}\left(\frac{(n+1)!}{\delta}\right)^{l+1} \right. &
  \lambda^l 
  \left(\lambda \sum_{m=0}^i\binom {i}{m} \frac{z^{i-m} (-z)^{(n+1)(l+1)+m}}{(n+1)(l+1)+m}
  \vphantom{-\sum_{j=0}^n a_{ji}\sum_{m=0}^j \binom jm {z^{j-m}(t-z)^{(n+1)l+n+m}}}\right.\\
  &\left.\left.
  \quad -\sum_{j=0}^n a_{ji}\sum_{m=0}^j \binom jm \frac{{z^{j-m}(-z)^{(n+1)(l+1)+m}}}{(n+1)(l+1)+m}\right)\right).
  \end{align*}
}
  This then yields that
  \begin{align*}
 f_{\lambda,P_i}(z)=
z^i+\sum_{l=0}^{+\infty} \left(
 \frac{1}{((n+1)l+n)!}\left(\frac{(n+1)!}{\delta}\right)^{l+1} \right. &
  \left(\alpha_{l+1}^{(i)}\lambda^{l+1}z^{i+(n+1)(l+1)} 
  \vphantom{-\lambda^l\sum_{j=0}^n \alpha_{l+1}^{(j)}a_{ji}z^{(n+1)(l+1)+j}}\right.\\
  &\left.\left.
  \quad -\lambda^l\sum_{j=0}^n \alpha_{l+1}^{(j)}a_{ji}z^{(n+1)(l+1)+j}\right)\right),
\end{align*} 
which is exactly the expression given in Lemma \ref{calculVP}.
\end{proof}

Our last task is to prove the density of the vector space spanned by the functions $f_{\lambda,P}$, where $P$ ranges over $P\in\cc_{n}[z]$ and $\lambda$ ranges over a  certain subset $\ca$ of $\cc$ having an accumulation point in $\cc$.
\begin{lemma}\label{lemme2}
 Let $B$ be a continuous linear operator on $\oc$ such that $B=T_n B T_n$ for some $n\ge 0$ and let $\delta>0$. For every subset $\ca$ of $\cc$ which has an accumulation point in $\cc$, the vector space $$\vspan{\bigcup_{\lambda\in \ca} \ker(B+\delta S_{n+1} -\lambda I)}$$ is dense in $\oc$.
\end{lemma}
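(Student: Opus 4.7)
The plan is to invoke the Hahn--Banach theorem: it suffices to show that every continuous linear functional $\phi\in\oc^{*}$ vanishing on each $f_{\lambda,P_{i}}$ for $\lambda\in\ca$ and $i=0,\ldots,n$ is identically zero, since by Lemma~\ref{lemme1} these vectors span the full eigenspace $\ker(B+\delta S_{n+1}-\lambda I)$. Set $b_{k}\coloneqq\phi(z^{k})$; the sequence $(b_{k})_{k\geq 0}$ determines $\phi$ completely, and the continuity of $\phi$ on $\oc$ yields constants $C,r>0$ such that $|b_{k}|\leq Cr^{k}$ for every $k\geq 0$.

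First, thanks to the factorial $((n+1)l+n)!$ appearing in the denominator of the $l$-th term of the series of Lemma~\ref{calculVP}, the map $\lambda\mapsto f_{\lambda,P_{i}}$ is entire from $\cc$ into $\oc$. Composing with $\phi$, the scalar function $F_{i}(\lambda)\coloneqq\phi(f_{\lambda,P_{i}})$ is entire on $\cc$; since it vanishes on $\ca$, which has an accumulation point in $\cc$, the identity theorem forces $F_{i}\equiv 0$ on $\cc$ for each $i=0,\ldots,n$.

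I would then read off the vanishing of each Taylor coefficient of $F_{i}$ in $\lambda$. Using the beta-function identity $\alpha_{k}^{(i)}=\frac{((n+1)k-1)!\, i!}{((n+1)k+i)!}$ together with the cancellation $(n+1)(k+1)-1=(n+1)k+n$, the $n+1$ equations coming from $\lambda^{k}$ with $k\geq 1$ and $i=0,\ldots,n$ collapse to the simple recursion
\[ V_{k}=\frac{(n+1)!}{\delta}\, A^{T}\, V_{k+1}, \qquad k\geq 1, \]
where $V_{k}=(i!\, d_{(n+1)k+i})_{0\leq i\leq n}^{T}$, $d_{l}\coloneqq b_{l}/l!$, and $A=(a_{ij})_{0\leq i,j\leq n}$ is the matrix of $B$ acting on $\cc_{n}[z]$ in the canonical basis. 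The $k=0$ equations read separately $b_{i}=\frac{(n+1)!}{\delta\, n!}\sum_{j=0}^{n} a_{ji}\,\alpha_{1}^{(j)}\, b_{n+1+j}$ for $i=0,\ldots,n$.

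The heart of the argument, and its main apparent obstacle, is to conclude $V_{k}=0$ without any invertibility hypothesis on $A$ (in our applications to Proposition~\ref{approximationalgebrehypercyclique} and Corollary~\ref{corollaire}, $A$ will typically even be nilpotent, but the lemma must cover the general case). I would iterate the constant-coefficient recursion to obtain $V_{k}=\bigl(\tfrac{(n+1)!}{\delta}\bigr)^{m}(A^{T})^{m}V_{k+m}$ for every $m\geq 0$, and let $m\to\infty$: the bound $|b_{l}|\leq Cr^{l}$ implies $|V_{k+m}|\leq C'r^{(n+1)m}/((n+1)m)!$, and this factorial decay dominates the exponential factor $\bigl(\|A\|(n+1)!/\delta\bigr)^{m}$. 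Hence $V_{k}=0$ for every $k\geq 1$, so $b_{l}=0$ for all $l\geq n+1$; plugging this into the $k=0$ relations then gives $b_{0}=\cdots=b_{n}=0$. Therefore $\phi=0$, and the vector space $\vspan{\bigcup_{\lambda\in\ca}\ker(B+\delta S_{n+1}-\lambda I)}$ is dense in $\oc$ by Hahn--Banach.
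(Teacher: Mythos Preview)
Your proof is correct and follows essentially the same route as the paper: a Hahn--Banach argument, the observation that $\lambda\mapsto\phi(f_{\lambda,P_i})$ is entire and vanishes on a set with an accumulation point, extraction of a recursion on the moments from the vanishing of the Taylor coefficients, and iteration of this recursion using the factorial decay of the moments to conclude. The only cosmetic differences are that the paper represents the functional $\phi$ as integration against a compactly supported measure (whereas you work directly with $b_k=\phi(z^k)$ and the bound $|b_k|\le Cr^k$), and that the paper keeps the $\alpha_l^{(j)}$'s in the recursion and bounds them by $1$, while your beta-function identity $\alpha_k^{(i)}=((n+1)k-1)!\,i!/((n+1)k+i)!$ absorbs them and yields the slightly cleaner constant-coefficient recursion $V_k=\tfrac{(n+1)!}{\delta}A^T V_{k+1}$; in fact $\tilde U_l=((n+1)l-1)!\,V_l$ converts one recursion into the other.
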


\begin{proof}[Proof of Lemma \ref{lemme2}] We proceed as in the proof of \cite{GodSha}*{Theorem 5.1}. Let $\Lambda$ be a continuous linear functional on $\oc$ that vanishes on each function $f_{\lambda,P}$, $\lambda\in\ca$, $P\in\cc_n[z]$. The lemma will be proved as soon as we show that $\Lambda=0$. As in \cite{GodSha}, we use the fact that there exists a complex Borel measure $\mu$ on $\C$, supported on a disk of radius $r>0$ and centered at the origin, such that for every $f\in\oc$
$$\Lambda(f)=\int_{\C} f\mathrm{d}\mu.$$
\par\smallskip
For $0\le j\le n$ and $l\ge 0$, we set
$$u_l^{(j)}=\int_{\C} z^{(n+1)l+j} \mathrm{d}\mu(z).$$
\par\smallskip
We aim to prove for all $j$ and $l$, that $u_l^{(j)}=0$. The continuity of $\Lambda$ will then imply that $\Lambda=0$. 

\par\smallskip
For every polynomial $P\in\cc_n[z]$, we define $F_P\colon \cc\to\cc$ by setting
$$F_P(\lambda)=\int_{\C} f_{\lambda,P}\mathrm{d}\mu, \quad\lambda\in\C.$$
On the one hand, since $\Lambda(f_{\lambda,P})=0=\int_{\C} f_{\lambda,P}\mathrm{d}\mu$ for every $\lambda\in\ca$ and every $P\in\cc_n[z]$, we have $F_P(\lambda)=0$ for every $\lambda\in\ca$.
\par\smallskip
On the other hand, since $f_{\lambda,P}$ depends holomorphically on $\lambda$ by Lemma \ref{lemme1}, and since $\mu$ is compactly supported, differentiation under the integral sign shows that $F_P$ is an entire function. Since $\ca$ has an accumulation point, we deduce that $F_P=0$. In particular, for every $P\in \cc_n[z]$ and every $l\ge 0$, $$F^{(l)}_P(0)=\int_{\C} \left.
\frac{\partial^l f_{\lambda,P}}{\partial \lambda^l}  
\right|_{\lambda=0}\mathrm{d}\mu=0.$$ 
\par\smallskip
We will now show that the values of the $u_l^{(j)}$'s are linked to those of the derivatives  $F^{(l)}_{z^j}(0)$. In order to do this, we need to compute the successive derivatives of $f_{\lambda,z^j}$ with respect to $\lambda$ at the point $\lambda=0$.
\par\smallskip
We continue to denote by $B$ the $(n+1)$-square matrix $B=(a_{ij})_{1\leq i,j\leq n+1}$, i.e.\ the matrix of the restriction of the operator $B$ to $\cc_{n}[z]$ with respect to the canonical basis of $\cc_n[z]$. We also denote by $\tilde U_l$ the vector $\cc^{n+1}$ defined by $\tilde U_l=(\alpha_l^{(0)} u_l^{(0)}, \alpha_l^{(1)} u_l^{(1)},\ldots, \alpha_l^{(n)} u_l^{(n)})$. From Lemma \ref{calculVP} we deduce that
\begin{align*}
 F_{P_i}(0)&=u^{(i)}_0-\frac{(n+1)!}\delta \frac1{n!} \sum_{j=0}^n a_{ji}\alpha_1^{(j)}u_1^{(j)}\quad\textrm{ for every } i=0,\ldots n,
\end{align*}
and since $F_{P_i}(0)=0$ for $i=0,\ldots, n$ we get that
\begin{align*}
 \tilde U_0 = \frac{(n+1)!}{\delta n!} {^\intercal}B\tilde U_1,
\end{align*}
where the notation ${^\intercal}B$ denotes the transpose of the matrix $B$.
For $l\geq 1$, we differentiate with respect to $\lambda$ the expression given in Lemma \ref{calculVP} and obtain that for every $i=0,\ldots, n$,
\begin{align*}
 \diffp{^l F_{P_i}}{\lambda^l}(0)&
 =\left(\frac{(n+1)!}{\delta}\right)^l \left( \frac{l!}{((n+1)(l-1)+n)!} 
 \alpha_l^{(i)}u_l^{(i)}\right.\\
 &\quad\qquad\left.
 -\frac{(n+1)!}{\delta} \frac{l!}{((n+1)l+n)!} \sum_{j=0}^n a_{ji} \alpha_{l+1}^{(j)}u_{l+1}^{(j)}\right) =0
 \end{align*}
from which we deduce that
\begin{align*}
 \tilde U_l = \frac{(n+1)!}{\delta }\frac{\left((n+1)(l-1)+n\right)!}{\left((n+1)l+n\right)!} {^\intercal}B\tilde U_{l+1}.
\end{align*}
It follows that for every $m\ge 0$ and every $l\ge 0$,
\begin{align*}
 \tilde U_l &= \left(\frac{(n+1)!}{\delta }\right)^{m}\frac{\left((n+1)(l-1)+n\right)!}{\left((n+1)(l+m-1)+n\right)!} {^\intercal}B^m\tilde U_{l+m},
\end{align*}
with the convention that $(-1)!=1$.
\par\smallskip
If we denote by $\norm{\,\cdot\,}$ the sup norm on $\cc^{n+1}$ and by $\vertiii\cdot$ the norm on $(n+1)$ square matrices subordinated to this norm, we get that
\begin{align*}
 \norm{\tilde U_l}& \leq
 \left(\frac{(n+1)!}{\delta }\right)^{m}\frac{\left((n+1)(l-1)+n\right)!}{\left((n+1)(l+m-1)+n\right)!} \vertiii{{^\intercal}B}^m \norm{\tilde U_{l+m}}\quad\textrm{ for every }l,m\ge 0.
\end{align*}
\par\smallskip
We now observe that for every $j=0,\ldots, n$ and every $l\ge 1$, we have
\begin{align*}
 \alpha_l^{(j)}&=\sum_{m=0}^j \binom jm \frac{(-1)^m}{(n+1)l+m}
 =\int_0^1 \sum_{m=0}^j \binom jm (-1)^m t^{(n+1)l+m-1} \mathrm{d}t\\
 &=\int_0^1 t^{l(n+1)-1}(t-1)^j \mathrm{d} t,
 \end{align*}
so that $0\leq \alpha_l^{(j)} \leq 1$ for every $l\ge 1$. Recall also that $\alpha_0^{(j)}=1$.
\par\smallskip
On the other hand, remembering that $\mu$ is supported on the disc centered at $0$ and of radius $r$ (which may be assumed to be bigger than 1), we have
\begin{align*}
 \abs{u_l^{(j)}}=\abs{\int_{\C} z^{(n+1)l+j}\mathrm{d}\mu(z)}
 \leq r^{(n+1)l+j} |\mu|(\cc) \quad\textrm{ for every } j=0,\ldots, n \textrm{ and } l\ge 0.
\end{align*}
It thus follows that
\begin{align*}
 \norm{\tilde U_l}& \leq
 \left(\frac{(n+1)!}{\delta }\right)^{m}\frac{\left((n+1)(l-1)+n\right)!}{\left((n+1)(l+m-1)+n\right)!} \vertiii{{^\intercal}B}^m r^{(n+1)(l+m)+n} |\mu|(\cc).
\end{align*}
Letting $m$ go to infinity, we conclude that $\tilde U_l=0$. Thus $u^{(j)}_l=0$ for all $j,l\ge 0$,  from which it follows that $\Lambda=0$.
Lemma \ref{lemme2} is thus proved.
\end{proof}

As explained at the beginning of the proof of Theorem \ref{maintheorembis}, Lemma \ref{lemme2} combined with the Godefroy-Shapiro Criterion allows us to conclude that operators in $\clmk$ of the form $B+\delta S_{n+1}$, with $\delta>0$, $n\ge 0$ and $B=T_n B T_n $, are hypercyclic. Hypercyclic operators are thus dense in $(\clmk,{\sot})$. We have already observed that the set of hypercyclic operators is $G_\delta$ in  $(\clmk,{\sot})$, so this concludes the proof of Theorem \ref{maintheorembis}.
\end{proof}


\section{Operators supporting a hypercyclic algebra: proof of Theorem \ref{maintheorem}}\label{section4}


A last crucial step in the proof of Theorem \ref{maintheorem} is the following result.

\begin{theorem}\label{operateur_algebre_hypercyclique}
Let $B$ be a continuous linear operator on $\oc$ such that there exists $n\ge 0$ which satisfies $B=T_n B T_n$. Suppose also for every $j\ge 0$ that the sequence $(B^m z^j)_{m}$ converges to $0$ in $\oc$ as $m$ goes to infinity. Then for every $\delta\in (0,1)$, the operator $A=B+\delta S_{n+1}$ supports a hypercyclic algebra.
\end{theorem}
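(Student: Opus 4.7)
The plan is to invoke the Baire Category Criterion from \cite{BCP}*{Corollary 2.4}. In the setting at hand, this criterion will reduce the existence of a hypercyclic algebra for $A$ to verifying a dense approximation property: for every integer $m\ge 1$, every nonempty open sets $U,V\subset\oc$, and every $0$-neighborhood $W$, one should produce $u\in\oc$ and $N\ge 1$ with $u$ close to a point of $U$, $u^j$ small for $1\le j<m$, and $A^N(u^m)\in V$. The raw material for building such a $u$ is the family of eigenvectors $f_{\lambda,P}$ of $A$ supplied by Lemmas \ref{lemme1} and \ref{calculVP}: they satisfy $A^Nf_{\lambda,P}=\lambda^N f_{\lambda,P}$ and, by Lemma \ref{lemme2}, their linear span (for $\lambda$ ranging over any set with an accumulation point in $\cc$) is dense in $\oc$.

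The principal obstacle is that $A$ is \emph{not} multiplicative: for $u$ a linear combination of eigenvectors, the power $u^m$ is generally not itself a combination of eigenvectors of $A$, and the action of $A^N$ on $u^m$ cannot be read off diagonally. To resolve this, I plan to exploit the fact that the differential equation underlying Lemma \ref{lemme1} has characteristic roots $\omega$ determined by $\omega^{n+1}=\tfrac{(n+1)!\lambda}{\delta}$. Consequently $f_{\lambda,P}$ decomposes, modulo a polynomial remainder tied to $BP$, as a combination of exponentials $e^{\omega z}$, and products of such exponentials are again exponentials $e^{\eta z}$ with $\eta$ a sum of $\omega$'s. For any such exponential one has $A(e^{\eta z})=\tfrac{\delta\eta^{n+1}}{(n+1)!}e^{\eta z}+B(e^{\eta z})$ with $B(e^{\eta z})\in \cc_n[z]$. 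Iterating $A$ and invoking the hypothesis that $B^kz^j\to 0$ in $\oc$ for every $j\ge 0$ -- which, since $B=T_nBT_n$, forces the matrix of $B$ on $\cc_n[z]$ to have spectral radius strictly less than one -- the polynomial remainders do not accumulate, and one obtains an asymptotic identity of the form $A^N(e^{\eta z})\sim\bigl(\tfrac{\delta\eta^{n+1}}{(n+1)!}\bigr)^{N}e^{\eta z}$, modulo an error in $\cc_n[z]$ that is geometrically small in $N$.

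With this approximate multiplicativity in hand, the verification of the BCP criterion proceeds as follows. Given a target $v\in V$, I would build $u$ as a linear combination $\sum_{j}c_j\,f_{\lambda_j,P_j}$, expand $u^m$ by the multinomial formula, and decompose each cross-product using the exponential decomposition of the $f_{\lambda_j,P_j}$'s. The $m$-fold sums of characteristic roots that appear, $\eta=\omega_{k_1}+\cdots+\omega_{k_m}$, can be arranged to accumulate in a region of $\cc$, and a Godefroy--Shapiro-type density argument -- patterned after the proof of Lemma \ref{lemme2}, using a compactly supported test measure and analyticity in the parameters $\lambda_j$ -- lets us approximate $v/\bigl(\tfrac{\delta\eta^{n+1}}{(n+1)!}\bigr)^{N}$ by the corresponding exponential combination. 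Applying the asymptotic formula for $A^N$ then converts this into an approximation of $v$ by $A^N(u^m)$. Choosing the coefficients $c_j$ small and the $\lambda_j$ close to zero (where the polynomial part of $f_{\lambda_j,P_j}$ dominates) simultaneously keeps $u$ and its low powers $u^j$, $1\le j<m$, inside the prescribed neighborhoods of $0$, which completes the verification of the criterion and yields the desired hypercyclic algebra.

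The hard part of the argument is the quantitative form of the asymptotic identity for $A^N$ on products of eigenvectors, together with the parameter selection that reconciles smallness of $u^j$ for $j<m$ with the approximation requirement for $A^N(u^m)$: it is in this step that the spectral radius hypothesis $B^kz^j\to 0$ plays its decisive role, controlling the cumulative influence of the non-multiplicative part $B$ throughout the binomial expansion of $A^N=(B+\delta S_{n+1})^N$.
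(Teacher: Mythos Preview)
Your proposal misstates the Birkhoff-type criterion of Lemma~\ref{lemme1.6}. The requirement is not that $u^j$ be small for $1\le j<m$ while $A^N(u^m)\in V$; it is that $u\in U$ (an \emph{arbitrary} nonempty open set), that $A^N(u^j)\in W$ for $1\le j<m$, and that $A^N(u^m)\in V$. Your final paragraph is built on the wrong reading: you propose taking the $c_j$ small and the $\lambda_j$ near~$0$ so that ``$u$ and its low powers $u^j$'' land in neighborhoods of~$0$. But $u$ must approximate a prescribed element of $U$, so it cannot be made small, and what must be controlled is $A^N(u^j)$, not $u^j$. This is the decisive step, and as written it fails: once $u$ is forced to lie in $U$, the lower powers $u^j$ are essentially arbitrary, and one needs a mechanism that drives $A^N(u^j)\to 0$ for $j<m$ while simultaneously steering $A^N(u^m)$ towards $V$. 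Nothing in your parameter selection addresses this tension.

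The paper resolves this by working directly with the multiplicative family $(e_\beta)_\beta$ rather than with the eigenvectors $f_{\lambda,P}$. The vector $u$ is taken of the form $u_N=\sum_l a_l e_{\alpha_l}+\sum_j c_j(N)e_{\beta_j/m}$ with $\alpha_l$ in a small disk (so that the first sum lands in $U$) and $\beta_j$ on carefully placed arcs with $|\phi(\beta_j)|>1$; the coefficients $c_j(N)$ are chosen so that $c_j(N)^m\phi(\beta_j)^N=b_j$. Products $u_N^k$ are then genuine linear combinations of exponentials $e_\lambda$, and Fact~\ref{faitIV-1} gives the exact identity $A^N e_\lambda=\phi(\lambda)^N\varepsilon_\lambda-(\phi(\lambda)I-B)^{-1}B^{N+1}e_\lambda$ (note the limit is $\phi(\lambda)^N\varepsilon_\lambda$, not $\phi(\lambda)^N e_\lambda$ as in your asymptotic). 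The heart of the argument is then a geometric lemma (Lemma~\ref{lemmephi}): a strict-convexity estimate ensuring that for $k<m$, or for $k=m$ with mixed $\beta_j$'s, the relevant $|\phi(\lambda)|$ are strictly less than $1$ (respectively strictly less than the product $\prod|\phi(\beta_{j_i})|^{1/m}$), so that the cross-terms in $A^N u_N^k$ die out, while the diagonal $k=m$ terms survive and reproduce $\sum_j b_j\varepsilon_{\beta_j}\in V$. Your outline has no analogue of this convexity/separation step, and without it the multinomial cross-terms cannot be controlled.
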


To prove Theorem \ref{operateur_algebre_hypercyclique}, we will apply \cite{Bay1}*{Lemma 1.6} (cf.\ also  \cite{BCP}*{Corollary 2.4}), which provides a useful ``Birkhoff-type Criterion'' for proving that an operator admits a hypercyclic algebra.

\begin{lemma}[\cite{Bay1}]\label{lemme1.6}
 Let $A$ be a continuous operator on a separable, metrizable  and complete topological algebra $X$. Assume that for any pair $(\cu,\cv)$ of nonempty open sets in $X$, any neighborhood $\cw$ of $0$ in $X$, and for any integer $m\ge 1$, one can find a vector $u\in\cu$ and an integer $N\ge 0$ such that $A^N (u^j)$ belongs to $\cw$ for every $1\le j<m$ and $A^N(u^m)$ belongs to $\cv$. Then $A$ supports a hypercyclic algebra.
\end{lemma}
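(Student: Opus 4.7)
The plan is to verify the hypotheses of the Birkhoff-type criterion of Lemma \ref{lemme1.6} for $A = B + \delta S_{n+1}$. So I fix nonempty open sets $\cu, \cv \subset \oc$, a neighborhood $\cw$ of $0$ in $\oc$, and an integer $m \ge 1$, and I aim to produce $u \in \cu$ and $N \ge 1$ such that $A^N(u^j) \in \cw$ for every $1 \le j < m$ and $A^N(u^m) \in \cv$. The two basic tools are the eigenvector field $(\lambda,P) \mapsto f_{\lambda,P}$ of $A$ furnished by Lemma \ref{lemme1}, together with its explicit series expansion from Lemma \ref{calculVP}, and the density of its span from Lemma \ref{lemme2}.

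The first step is a reduction using Lemma \ref{lemme2}: up to shrinking $\cv$, I may assume that some element $v \in \cv$ is a finite linear combination $v = \sum_{\ell} \mu_\ell f_{\beta_\ell, Q_\ell}$ with distinct nonzero $\beta_\ell \in \C$ and $Q_\ell \in \cc_n[z]$. I then look for $u$ of the form
\[
u \;=\; \sum_{\ell} c_\ell\, f_{\lambda_\ell, P_\ell},
\]
where the scalars $c_\ell$ will be small, the polynomials $P_\ell$ will be chosen in $\cc_n[z]$, and the eigenvalues $\lambda_\ell$ will be placed on a small circle of radius $\rho$ to be calibrated later. Since $f_{\lambda, P}$ is holomorphic in $\lambda$ with $f_{0,P} = P$, one has $u$ close to $0$ (hence in $\cu$, if the base point of $\cu$ is taken close to $0$ via Lemma \ref{lemme2}) provided the $c_\ell$'s and $\rho$ are small enough.

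The crux of the argument, and the main obstacle, is to compute $A^N(u^j)$ for $1 \le j \le m$. Expanding $u^j$ via the multinomial formula yields a sum of products $f_{\lambda_{i_1},P_{i_1}} \cdots f_{\lambda_{i_j},P_{i_j}}$, and these products are \emph{not} eigenvectors of $A$, so one cannot simply pull out a factor $(\lambda_{i_1} \cdots \lambda_{i_j})^N$, as one would in the case of $\phi(D)$ with eigenvectors $e^{\lambda z}$. Instead, I would expand $A^N = (B + \delta S_{n+1})^N$ as a non-commutative binomial in $B$ and $\delta S_{n+1}$. Because $B = T_n B T_n$ has range in $\cc_n[z]$, the hypothesis that $B^p z^j \to 0$ in $\oc$ for every $j \ge 0$ amounts precisely to the finite-dimensional operator $B|_{\cc_n[z]}$ having spectral radius strictly less than $1$. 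Hence any summand of $A^N$ containing a long block of $B$'s contributes a geometrically decaying factor in $N$, while blocks containing many copies of $S_{n+1}$ behave like iterated differentiation and can be estimated explicitly on products of the $f_{\lambda_i, P_i}$ using the Leibniz rule and the series of Lemma \ref{calculVP}.

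Finally, I tune the parameters: with the $\lambda_\ell$ on a circle of radius $\rho$ and $N$ large, one chooses $c_\ell = c_\ell(N, \rho)$ such that $|c_\ell|^m$ combined with the growth of $A^N$ on an $m$-fold product reproduces $v$ up to error in $\cv$, while $|c_\ell|^j$ times the analogous growth on $j$-fold products tends to $0$ for $1 \le j < m$. This balancing is possible because, modulo the geometrically small $B$-contributions controlled by $\rho(B|_{\cc_n[z]}) < 1$, the leading $A^N$-behaviour on $j$-fold products scales homogeneously in the eigenvalue parameter, so one can tune $|c_\ell|^j \rho^{Nj}$ to vanish for $j < m$ while keeping $|c_\ell|^m \rho^{Nm}$ bounded away from $0$. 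The hypothesis $B^p z^j \to 0$ enters exactly at this point to guarantee that the $B$-contributions do not destroy the homogeneity, thereby allowing the Birkhoff-type criterion of Lemma \ref{lemme1.6} to apply and delivering a hypercyclic algebra for $A$.
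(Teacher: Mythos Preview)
Your proposal is not a proof of the stated lemma at all. Lemma~\ref{lemme1.6} is an \emph{abstract} Birkhoff-type criterion: it asserts that \emph{any} continuous operator $A$ on \emph{any} separable complete metrizable topological algebra $X$ satisfying the stated orbit condition supports a hypercyclic algebra. The paper does not prove this; it is quoted from \cite{Bay1}*{Lemma~1.6} (see also \cite{BCP}*{Corollary~2.4}), and its proof is a Baire category argument in $X$ having nothing to do with $\oc$, with $B$, or with $S_{n+1}$. What you have written is instead a sketch of how to \emph{apply} Lemma~\ref{lemme1.6} to the particular operator $A=B+\delta S_{n+1}$ on $\oc$; that is the content of Lemma~\ref{mainlemma} and Theorem~\ref{operateur_algebre_hypercyclique}, not of Lemma~\ref{lemme1.6}.

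Even read as an attempt at Lemma~\ref{mainlemma}, your strategy has a genuine gap. You propose to take $u$ as a linear combination of eigenvectors $f_{\lambda_\ell,P_\ell}$ and to control $A^N(u^j)$ by expanding $(B+\delta S_{n+1})^N$ as a non-commutative binomial and applying Leibniz-type estimates to products $f_{\lambda_{i_1},P_{i_1}}\cdots f_{\lambda_{i_j},P_{i_j}}$. But such products are not eigenvectors and have no tractable relation to the eigenvector field, so the ``homogeneous scaling in the eigenvalue parameter'' you invoke is not available. The paper circumvents this by \emph{not} using the $f_{\lambda,P}$ as building blocks for $u$: it uses the exponentials $e_\beta(z)=e^{\beta z}$, which are genuinely multiplicative ($e_\alpha\cdot e_\beta=e_{\alpha+\beta}$), so that powers $u^j$ are again finite combinations of exponentials. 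The link to the dynamics of $A$ is then provided by Fact~\ref{faitIV-1}, which gives the exact identity
\[
A^m e_\beta \;=\; \phi(\beta)^m\varepsilon_\beta \;-\; (\phi(\beta)I-B)^{-1}B^{m+1}e_\beta,
\]
so that $A^N e_\beta$ differs from an eigenvector term by something driven to $0$ by the hypothesis $B^m z^j\to 0$. The convexity argument of Lemma~\ref{lemmephi} then handles the required balancing. Your non-commutative binomial expansion does not recover this structure.
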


In \cite{Bay1}, this criterion is applied in the following way: the vector $u$ is found in a dense linear subspace of the space $X$ spanned by holomorphic vector fields $(E_\lambda)_{\lambda\in\cc}$ of eigenvectors of $A$. To compute $A^N(u^j)$, we require that these eigenvector fields be multiplicative, i.e.\ that  $E_{\lambda+\mu}=E_\lambda \cdot E_\mu$ for every $\lambda$ and $\mu$ in $\cc$.  In our case, the product of two eigenvectors is not necessarily an eigenvector. To overcome this obstacle, we will introduce multiplicative vector fields whose powers are close to powers of eigenvectors fields. It is here that the assumption that the sequences $(B^m z^j)_{m}$, $j\ge 0$, converge to $0$ in $\oc$ plays a crucial role. 
\par\smallskip

We begin by giving  a simpler expression for some of the eigenvectors exhibited in Lemma \ref{lemme1}. 
Let $B$ be an operator satisfying the hypothesis of Theorem \ref{operateur_algebre_hypercyclique} and let $A$ be the operator $A=B+\delta S_{n+1}$, with $\delta\in(0,1)$. For every $\alpha\in\cc$, we denote by $e_\alpha$ the function defined  by $e_\alpha(z)=\exp(\alpha z)$,
$z\in\cc$. We also set $$x_0=\left(\frac{(n+1)!}{\delta}\right)^{\frac1{n+1}},$$ and we denote by $\phi$ the function defined  by $$\phi(z)= \left(\frac{z}{x_0}\right)^{n+1}=\frac{\delta}{(n+1)!}\,z^{n+1} \quad\textrm{for every } z\in\cc.$$
\par\smallskip
For every $\beta \in\cc\setminus \phi^{-1}\left(\spectre B\right)$, where $\spectre B$ denotes the spectrum of the operator $B$ acting on $\oc$, we put $$\varepsilon_\beta=e_\beta+\left(\phi(\beta)I-B\right)^{-1}Be_\beta.$$ We note that $\spectre{B}$ is finite. 
Keeping in mind that $B\varepsilon_\beta$ is a polynomial of degree at most $n$, we have
\begin{align*}
 A\varepsilon_\beta
 &=\frac{\delta}{(n+1)!} \beta^{n+1} e_\beta+Be_\beta+B\left(\phi(\beta)I-B\right)^{-1}Be_\beta\\
 &=\phi(\beta)e_\beta+\phi(\beta)\left(\phi(\beta)I-B\right)^{-1}Be_\beta+Be_\beta -\left(\phi(\beta)I-B\right)\left(\phi(\beta)I-B\right)^{-1}Be_\beta\\
 &=\phi(\beta)\varepsilon_\beta,
\end{align*}
i.e.\ $\varepsilon_\beta$ is an eigenvector of $A$ associated to the eigenvalue $\phi(\beta)$.
\par\smallskip
We now prove the following density lemma.

\begin{lemma}\label{lemme3} 
Let $\cb$ be a subset of $\cc$ which has an accumulation point and is such that $\beta e^{\frac{2i\pi j}{n+1}}$ belongs to $\cb$ for every $0\le j\le n$ and every $\beta\in\cb$.
Then the linear vector space
$$\vspan{\varepsilon_\beta\ ; \ \beta\in\cb\setminus\phi^{-1}\left(\spectre{B}\right)}$$ is dense in $\oc$.
\end{lemma}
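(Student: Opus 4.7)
My plan is to reduce the claim to Lemma \ref{lemme2}. Setting $W := \vspan{\varepsilon_\beta\ ;\ \beta \in \cb \setminus \phi^{-1}(\spectre{B})}$ and $\ca := \phi(\cb) \setminus \spectre{B}$, I will show that $\ker(A - \lambda I) \subset W$ for every $\lambda \in \ca$; since Lemma \ref{lemme2} applied to $\ca$ (once we verify it has an accumulation point in $\cc$) gives that $\vspan{\bigcup_{\lambda \in \ca} \ker(A - \lambda I)}$ is dense in $\oc$, and this space is then contained in $W$, the density of $W$ in $\oc$ follows.

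Fix $\lambda \in \ca$ and pick $\beta \in \cb$ with $\phi(\beta) = \lambda$. Set $\zeta := e^{2i\pi/(n+1)}$; the rotation-closure hypothesis on $\cb$ ensures that $\beta \zeta^j \in \cb$ for every $0 \le j \le n$, and since $\phi(\beta \zeta^j) = \phi(\beta) = \lambda \notin \spectre{B}$, each $\varepsilon_{\beta \zeta^j}$ is well-defined, belongs to $W$, and is an eigenvector of $A$ for the eigenvalue $\lambda$. By Lemma \ref{lemme1}, $\ker(A - \lambda I)$ is $(n+1)$-dimensional, so the inclusion $\ker(A - \lambda I) \subset W$ will follow once the family $\{\varepsilon_{\beta \zeta^j}\}_{j=0}^n$ is shown to be linearly independent.

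The crucial observation is that $\varepsilon_\beta - e_\beta = (\phi(\beta) I - B)^{-1} B e_\beta$ lies in $\cc_n[z]$: indeed, $B e_\beta \in \cc_n[z]$ since the range of $B = T_n B T_n$ is contained in $\cc_n[z]$, and $(\phi(\beta) I - B)^{-1}$ preserves $\cc_n[z]$ whenever $\phi(\beta) \notin \spectre{B}$ (using that $\phi(\beta) \neq 0$, which holds because $0 \in \spectre{B}$). Consequently, for every $k > n$, the $k$-th Taylor coefficient at $0$ of $\varepsilon_\beta$ coincides with that of $e_\beta$, namely $\beta^k/k!$. A linear dependence $\sum_{j=0}^n c_j \varepsilon_{\beta \zeta^j} = 0$ therefore forces $\sum_{j=0}^n c_j \zeta^{jk} = 0$ for every $k > n$; specializing $k = n+1+l$ for $l = 0, 1, \ldots, n$ yields the Vandermonde system $\sum_{j=0}^n c_j (\zeta^j)^l = 0$ at the $n+1$ distinct $(n+1)$-th roots of unity $\zeta^0, \zeta^1, \ldots, \zeta^n$, which forces $c_0 = \cdots = c_n = 0$.

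It remains to check that $\ca$ has an accumulation point in $\cc$: if $\beta_0$ is an accumulation point of $\cb$ and $(\beta_m) \subset \cb \setminus \{\beta_0\}$ converges to $\beta_0$, then the $\beta_m$'s cannot all lie in the finite fibre $\phi^{-1}(\phi(\beta_0))$, so infinitely many $\phi(\beta_m)$ are distinct from $\phi(\beta_0)$ and converge to it; since $\spectre{B}$ is finite, $\phi(\beta_m) \notin \spectre{B}$ eventually, and $\phi(\beta_0)$ is an accumulation point of $\ca$. Lemma \ref{lemme2} then applies and completes the proof. The only non-routine step is the Vandermonde-based linear independence of $\{\varepsilon_{\beta \zeta^j}\}_{j=0}^n$, which is precisely where the rotation-closure hypothesis on $\cb$ enters the picture.
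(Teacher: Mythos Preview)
Your proof is correct and follows essentially the same strategy as the paper: reduce to Lemma~\ref{lemme2} by showing that for each $\lambda\in\phi(\cb)\setminus\spectre{B}$ the rotated family $\{\varepsilon_{\beta\zeta^j}\}_{j=0}^n$ (with $\phi(\beta)=\lambda$) is a basis of the $(n+1)$-dimensional eigenspace $\ker(A-\lambda I)$. The only cosmetic difference is in the linear-independence step: the paper differentiates $n+1$ times to annihilate the polynomial part $\varepsilon_{\beta_j}-e_{\beta_j}\in\cc_n[z]$ and then invokes the classical linear independence of the distinct exponentials $e_{\beta_j}$, whereas you read off Taylor coefficients of order $>n$ and solve a Vandermonde system at the $(n+1)$-th roots of unity; these are two ways of saying the same thing. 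Your treatment is slightly more explicit on two points the paper leaves to the reader: that $(\phi(\beta)I-B)^{-1}$ preserves $\cc_n[z]$ (hence $\varepsilon_\beta-e_\beta\in\cc_n[z]$), and that $\phi(\cb)\setminus\spectre{B}$ has an accumulation point in $\cc$.
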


\begin{proof} We will show that for every $ \beta\in\cb\setminus\phi^{-1}\left(\spectre{B}\right)$, we have
$$\vspan{\varepsilon_{\beta'}\ ; \ \beta'=\beta e^{\frac{2i\pi j}{n+1}},\ j=0,\ldots, n}=\ker(A-\phi(\beta) I).$$ Then, since for every $\beta\in\cb\setminus \phi^{-1}\left(\spectre B\right)$ and every $0\le j\le n$ the complex numbers $\beta e^{\frac{2i\pi j}{n+1}}$ still belong to $\cb\setminus \phi^{-1}\left(\spectre B\right)$, it will follow that
$$\vspan{\bigcup_{\lambda \in\phi(\cb)\setminus \spectre B} \ker(A-\lambda I)}
=\vspan{\varepsilon_\beta\ ; \ \beta\in\cb\setminus\phi^{-1}\left(\spectre{B}\right)}.$$
Since $\phi(\cb)\setminus\spectre{B}$ has an accumulation point in $\cc$, Lemma \ref{lemme2}  will imply that the linear vector space spanned by the family $\left\{\varepsilon_\beta\ ; \ \beta\in\cb\setminus\phi^{-1}\left(\spectre{B}\right)\right\}$ is dense in $\oc$.
\par\smallskip
So, let $\beta$ belong to $\cb\setminus\phi^{-1}\left(\spectre{B}\right)$, and set $\beta_j=\beta e^{\frac{2i\pi j}{n+1}}$, $j=0,\ldots, n$. We will check that the functions $\varepsilon_{\beta_0},\ldots, \varepsilon_{\beta_n}$ are linearly independent in $\oc$. Since 
for every $j=0,\ldots, n$ the function $\varepsilon_{\beta_j}$ is an eigenvector of $A$ associated to the eigenvalue $\phi(\beta)$, and since, by Lemma \ref{lemme1}, $\dim\ker(A-\phi(\beta)I)=n+1$,  Lemma \ref{lemme3} will thus be proven.
\par\smallskip
Let $\gamma_0,\ldots, \gamma_n$ be $n+1$ complex numbers such that $\sum_{j=0}^n\gamma_j\varepsilon_{\beta_j}=0$. Since for every $j$ the function $\varepsilon_{\beta_j}$ is the sum of $e_{\beta_j}$ and a polynomial of degree at most $n$, differentiating $n+1$ times the latter sum we get that $\sum_{j=0}^n \beta^{n+1}\gamma_j e_{\beta_j}=0$. Since $\phi(0)=0$ is an eigenvalue of $B$, $\beta^{n+1}\neq 0$, and we thus have 
$\sum_{j=0}^n \gamma_j e_{\beta_j}=0$. Now since $e_{\beta_0},\ldots, e_{\beta_n}$ are linearly independent functions in $\oc$, $\gamma_0=\cdots=\gamma_j=0$, and $\varepsilon_{\beta_0},\ldots, \varepsilon_{\beta_n}$ are linearly independent. Lemma \ref{lemme3} is thus proved.
\end{proof}

The vectors $\varepsilon_\beta$, $\beta\in\cb$,   thus span a dense linear subspace of $\oc$ provided that $\cb$ has an accumulation point and is invariant by the rotation of angle $\frac{2\pi}{n+1}$. However,
in general it does not hold that $\varepsilon_\alpha\cdot \varepsilon_\beta=\varepsilon_{\alpha+\beta}$. In order to overcome this difficulty, we will take advantage of the fact that, of course, $e_\alpha\cdot e_\beta=e_{\alpha+\beta}$, and of the link between $A^m e_\beta$ and $A^m \varepsilon_\beta$ given by the following Fact \ref{faitIV-1}.

\begin{fact}\label{faitIV-1}
 For every $\beta \in\cc\setminus \phi^{-1}\left(\spectre B\right)$ and every $m\ge 0$, we have
 \begin{align}
  A^me_\beta&=\phi(\beta)^m \varepsilon_\beta -\left(\phi(\beta)I-B\right)^{-1}B^{m+1}e_\beta.\label{***}
 \end{align}
\end{fact}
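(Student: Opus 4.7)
The plan is to prove the identity directly from the construction of $\varepsilon_\beta$, using the key structural fact that $B$ has range in $\cc_n[z]$ and that $A$ acts on $\cc_n[z]$ as $B$ does. Starting from the definition $\varepsilon_\beta = e_\beta + (\phi(\beta)I - B)^{-1}Be_\beta$, rearrange to
\[
e_\beta = \varepsilon_\beta - (\phi(\beta)I - B)^{-1}Be_\beta,
\]
apply $A^m$, and use that $\varepsilon_\beta$ is an eigenvector of $A$ (already verified just before the statement) to obtain
\[
A^m e_\beta = \phi(\beta)^m \varepsilon_\beta - A^m (\phi(\beta)I - B)^{-1}B e_\beta.
\]
Thus $(\ref{***})$ reduces to the claim that $A^m (\phi(\beta)I - B)^{-1}B e_\beta = (\phi(\beta)I - B)^{-1}B^{m+1}e_\beta$.

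The central observation is that the hypothesis $B = T_n B T_n$ forces the range of $B$ to lie in $\cc_n[z]$. In particular, $Be_\beta \in \cc_n[z]$. Moreover, setting $g := (\phi(\beta)I - B)^{-1}Be_\beta$, the relation $\phi(\beta)\,g = Bg + Be_\beta$ together with $\phi(\beta) \neq 0$ (which follows because $0 \in \sigma(B)$, since $B$ vanishes on $\ker T_n$, so $\beta = 0$ is excluded from the hypothesis $\beta \notin \phi^{-1}(\sigma(B))$) shows that $g \in \cc_n[z]$.

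Now I would use the two facts (i) $S_{n+1}$ annihilates $\cc_n[z]$, so $A = B + \delta S_{n+1}$ coincides with $B$ on $\cc_n[z]$, and (ii) $B$ preserves $\cc_n[z]$ (since its range lies there). Together they give $A^m P = B^m P$ for every $P \in \cc_n[z]$. Applying this to $P = g$ and using that $B$ commutes with $(\phi(\beta)I - B)^{-1}$ yields
\[
A^m (\phi(\beta)I - B)^{-1}B e_\beta = B^m (\phi(\beta)I - B)^{-1} Be_\beta = (\phi(\beta)I - B)^{-1} B^{m+1} e_\beta,
\]
which completes the identity.

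I do not expect a serious obstacle here: the only point that needs a moment of care is checking that $\phi(\beta) \neq 0$ so that the finite-dimensionality argument for $g$ goes through (this is built into the hypothesis $\beta \notin \phi^{-1}(\sigma(B))$). Everything else is algebraic manipulation exploiting the fact that $B$ effectively lives on the finite-dimensional invariant subspace $\cc_n[z]$, on which $A$ and $B$ agree.
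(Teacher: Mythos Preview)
Your argument is correct and follows essentially the same route as the paper: both start from the rearrangement $e_\beta=\varepsilon_\beta-(\phi(\beta)I-B)^{-1}Be_\beta$ and use that $(\phi(\beta)I-B)^{-1}Be_\beta$ lies in $\cc_n[z]$, on which $A$ acts as $B$. The paper packages this as a one-step induction on $m$, while you unfold it into the direct identity $A^m=B^m$ on $\cc_n[z]$ together with the commutation of $B$ with $(\phi(\beta)I-B)^{-1}$; the content is the same.
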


\begin{proof} By definition of $\varepsilon_\beta$, we have the equality $e_\beta=\varepsilon_\beta - \left(\phi(\beta)I-B\right)^{-1}Be_\beta$. If we assume for some integer $m\ge 0$  that (\ref{***}) is true, then, since $\left(\phi(\beta)I-B\right)^{-1}B^{m+1}e_\beta$ is a polynomial of degree at most $n$, we have
\begin{align*}
  A^{m+1}e_\beta&=\phi(\beta)^m A\varepsilon_\beta -B\left(\phi(\beta)I-B\right)^{-1}B^{m+1}e_\beta\\
  &=\phi(\beta)^{m+1} \varepsilon_\beta -\left(\phi(\beta)I-B\right)^{-1}B^{m+2}e_\beta.
\end{align*}
Therefore, by induction, (\ref{***}) is valid for every $m\ge 0$.
\end{proof}

When $(B^mP)_m$ converges to 0 for every polynomial $P$ of degree at most $n$, Fact \ref{faitIV-1} implies that $A^m e_\beta$ is close to $A^m\varepsilon_\beta=\phi(\beta)^m \varepsilon_\beta$. This will be a key element in showing that $A$ satisfies the assumptions of Lemma \ref{lemme1.6}, cf.\ Lemma \ref{mainlemma} below. In the proof of this lemma, we will need Lemma \ref{lemmephi}, which can be compared to the beginning of the proof of Theorem 2.1 in \cite{Bay1}. 

\begin{lemma}\label{lemmephi}
Let $m$ be a positive integer, and let $\eta$ and $\varepsilon$ be positive real numbers. To the parameters $\eta$ and $\varepsilon$ we associate two subsets  $\ca$  and $\cb$ of $\cc$ defined as follows:
$\ca=D(0,\eta)$ is the open disk of center 0 and of radius $\eta$ in $\C$, and
$\cb$ is the set $$\cb=\bigcup_{j=1}^n \left\{\zeta_j(1+it)\ ; \ t\in[-\eta,\eta]\right\}$$ where $\zeta_j=x_0(1+\varepsilon)e^{\frac{2i\pi j}{n+1}}$ for every $0\le j\le n$.
 Then there exist $\eta>0$ and $\varepsilon>0$ sufficiently small
  such that for every $j=1,\ldots, m$, every $d=0,\ldots, j$ with $d\neq m$, every $\tilde\xi_1,\ldots, \tilde\xi_{j-d}\in\ca$ and every $\xi_1,\ldots,\xi_d\in\cb$, we have
  \begin{align}
   \left|\phi\left(\tilde\xi_1+\cdots+\tilde\xi_{j-d}+\frac{\xi_1+\cdots+\xi_d}m\right)\right|&<1, \label{(i)}
  \end{align}
and for every $l=1,\ldots, m$ and every $\xi_1,\ldots,\xi_l\in\cb$ which are not all equal, we have
  \begin{align}
   \left|\phi\left(\frac{\xi_1+\cdots+\xi_l}m\right)\right|&<\left|\phi(\xi_1)\right|^{\frac1m}\cdots \left|\phi(\xi_l)\right|^{\frac1m}. \label{(ii)}
  \end{align}
\end{lemma}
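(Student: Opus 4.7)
The plan is to take $\eta$ and $\varepsilon$ small enough (in terms of $m$ and $n$) and verify (i) and (ii) by direct estimates based on the explicit form $\phi(z)=(z/x_0)^{n+1}$, for which $|\phi(w)|<1$ is equivalent to $|w|<x_0$. Recall that $|\tilde\xi|<\eta$ for $\tilde\xi\in\ca$, while $|\xi|=x_0(1+\varepsilon)\sqrt{1+t^2}$ for $\xi=\zeta_j(1+it)\in\cb$, so $x_0(1+\varepsilon)\le|\xi|\le x_0(1+\varepsilon)\sqrt{1+\eta^2}$. For (i) the triangle inequality gives
\[
\left|\tilde\xi_1+\cdots+\tilde\xi_{j-d}+\frac{\xi_1+\cdots+\xi_d}{m}\right|\le (j-d)\eta+\frac{d}{m}\,x_0(1+\varepsilon)\sqrt{1+\eta^2};
\]
since $j\le m$ and $d\le m-1$, the right-hand side is at most $m\eta+\frac{m-1}{m}\,x_0(1+\varepsilon)\sqrt{1+\eta^2}$, which tends to $\frac{m-1}{m}\,x_0<x_0$ as $\eta,\varepsilon\to 0^+$; hence (i) holds once $\eta$ and $\varepsilon$ are small.

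For (ii), extracting $(n+1)$-th roots, the claim becomes $\frac{1}{m}|\xi_1+\cdots+\xi_l|<x_0^{1-l/m}(|\xi_1|\cdots|\xi_l|)^{1/m}$. I would split into two cases according to whether the $\xi_i$'s all lie on the same ray through $0$. A short angular argument shows first that, whenever $\eta<\tan(\pi/(n+1))$, two elements $\zeta_k(1+it)$ and $\zeta_{k'}(1+it')$ of $\cb$ lie on the same ray if and only if $k=k'$ and $t=t'$: the angle between the nominal directions $\zeta_k$ and $\zeta_{k'}$ is a nonzero multiple of $2\pi/(n+1)$, while each ray wobbles by at most $\arctan\eta<\pi/(n+1)$. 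In \emph{Case A} (all $\xi_i$'s on a common ray), we then have $\xi_i=\zeta_k(1+it_i)$ for a common $k$, and a direct computation gives
\[
\text{LHS}=\frac{x_0(1+\varepsilon)}{m}\sqrt{l^2+\Bigl(\textstyle\sum_i t_i\Bigr)^{\!2}},\qquad \text{RHS}=x_0(1+\varepsilon)^{l/m}\Bigl(\textstyle\prod_i(1+t_i^2)\Bigr)^{\!1/(2m)}.
\]
For $l=m$ this reduces to $\bigl(1+(\tfrac{1}{m}\sum_i t_i)^2\bigr)^m<\prod_i(1+t_i^2)$, which after taking logarithms is strict Jensen's inequality applied to $u\mapsto\log(1+u^2)$, strictly convex on $(-1,1)$; the hypothesis ``not all $\xi_i$ equal'' forces not all $t_i$ equal, and the choice $\eta<1$ places $[-\eta,\eta]$ inside the convexity region. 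For $l<m$ the cruder bounds $\text{LHS}\le\tfrac{l}{m}\,x_0(1+\varepsilon)\sqrt{1+\eta^2}$ and $\text{RHS}\ge x_0(1+\varepsilon)^{l/m}$ suffice, since $\tfrac{l}{m}(1+\varepsilon)^{1-l/m}\sqrt{1+\eta^2}\to\tfrac{l}{m}<1$ as $\eta,\varepsilon\to 0^+$.

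In \emph{Case B} (the $\xi_i$'s do not all lie on a common ray) I would run a compactness argument. At $\eta=\varepsilon=0$ the set $\cb$ degenerates to the finite set $\{x_0\,e^{2i\pi j/(n+1)}:j=1,\dots,n\}$, and for each of the finitely many configurations in which the $\xi_i$'s are not all on the same ray, the strict triangle inequality produces a strict gap in the required bound. The minimum of the gap over this finite family is some $\Delta>0$, and by uniform continuity of both sides in $(\xi_1,\dots,\xi_l)$ on $\cb^l$ this gap persists (with value at least $\Delta/2$) once $\eta$ and $\varepsilon$ are sufficiently small. Choosing $\eta$ and $\varepsilon$ small enough to meet the constraints from (i), Case A and Case B simultaneously then finishes the proof. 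The main obstacle is the boundary situation $l=m$ in Case A: the hypothesis ``not all equal'' is indispensable there, and the only source of strictness is strict Jensen for the strictly convex function $\log(1+u^2)$ on $(-1,1)$ rather than the triangle inequality.
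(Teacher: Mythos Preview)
Your approach is essentially the paper's: the bound for (\ref{(i)}) and the crude estimate for $l<m$ in (\ref{(ii)}) are identical, and your Case~A (strict Jensen for $u\mapsto\log(1+u^2)$ on $(-1,1)$) and Case~B (a compactness/continuity argument exploiting the strict triangle inequality among the $(n+1)$-th roots of unity) correspond exactly to the paper's two subcases for $l=m$.

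There is, however, a labeling slip that creates a formal gap. You yourself prove that two points of $\cb$ lie on the same \emph{ray} if and only if they are \emph{equal}; taken literally, ``all $\xi_i$ on a common ray'' (your Case~A) is then the excluded configuration, and your Jensen step is vacuous. Conversely, tuples with a common index $k$ but distinct parameters $t_i$ land in your Case~B, yet as $\eta\to 0$ such tuples collapse to the diagonal where the gap in the inequality vanishes, so the compactness argument does not cover them. The repair is simply to split according to whether the \emph{indices} $k_i$ all coincide (Case~A) or not (Case~B); this is exactly what the paper does, and it is also what your Case~A computation already assumes when you write $\xi_i=\zeta_k(1+it_i)$ for a common $k$. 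Your angular observation then serves only to guarantee that the index $k$ of a point $\xi\in\cb$ is well-defined (the branches are disjoint), not to define the dichotomy.
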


\begin{proof} We first prove (\ref{(ii)}). We consider first the case where $l<m$: in this case we have
$$\left|\frac{\xi_1+\cdots +\xi_l}m\right|
\leq \frac{m-1}{m}x_0 (1+\varepsilon)(1+\eta)<x_0$$ as soon as $\varepsilon$ and $\eta$ are sufficiently small. Thus $$\left|\phi\left(\frac{\xi_1+\cdots+\xi_l}m\right)\right|<\phi(x_0)=1.$$ On the other hand, for every $i=1,\ldots, l$, 
$\left|\phi(\xi_i)\right|=(1+\varepsilon)^{n+1}>1$ for every $\varepsilon>0$. Thus  $$\left|\phi\left(\frac{\xi_1+\cdots+\xi_l}m\right)\right|<\left|\phi(\xi_1)\right|^{\frac1m}\cdots \left|\phi(\xi_l)\right|^{\frac1m}.$$
\par\smallskip
In the case where $l=m$, we denote by $i_j$ the unique integer with $0\le i_j \le n$ such that $\xi_j=\zeta_{i_j}(1+it_j)=
x_0(1+\varepsilon)e^{\frac{2i\pi}{n+1}i_j}(1+it_j)$ for some $t_j\in[-\eta,\eta]$. 

We first assume that the sequence $(i_j)_{1\le j\le m}$ is not constant. Since the unit disc is strictly convex, we have
$$\alpha\coloneqq\sup\left|\frac1m \sum_{p=1}^me^{\frac{2i\pi}{n+1} j_p}\right|<1$$
where the upper bound is taken over all the $m$-tuples of integers $(j_1,\ldots, j_m)$ lying between $0$ and $n$ which are not all equal. Hence if $\varepsilon$ is sufficiently small 
$$\left|\frac{\zeta_{i_1}+\cdots +\zeta_{i_m}}m\right|
\leq \alpha x_0(1+\varepsilon)<x_0.$$ By continuity, if $\eta$ is sufficiently small (depending on $\varepsilon$ but not on the $\xi_j$'s),
we have $$\left|\frac{\xi_1+\cdots +\xi_m}m\right|
<x_0,$$ and as before this yields that
$$\left|\phi\left(\frac{\xi_1+\cdots+\xi_m}m\right)\right|<1\leq \left|\phi(\xi_1)\right|^{\frac1m}\cdots \left|\phi(\xi_m)\right|^{\frac1m}.$$
\par\smallskip
{Next, we assume that} the sequence $(i_j)_{1\le j\le m}$ is constant, i.e.\ $i_j=i_1$ for every $1\le j\le m$, and we consider the function $\psi$ defined on $\R$ by $\psi(t)= \ln\left| \phi\left(\zeta_{i_1}(1+it)\right)\right|=\frac{n+1}{2}\ln (1+t^2)+(n+1)\ln(1+\varepsilon)$. We have $\psi''(t)=(n+1) \frac{1-t^2}{(1+t^2)^2}$ and if $0<\eta<1$, the function $\psi$ is strictly convex on $[-\eta,\eta]$. It follows that
\begin{align*}
 \left|\phi\left(\frac{\xi_1+\cdots+\xi_m}m\right)\right| &< \left|\phi(\xi_1)\right|^{\frac1m}\cdots \left|\phi(\xi_m)\right|^{\frac1m},
\end{align*}
and so (\ref{(ii)}) is shown in this case as well.
\par\smallskip
We now establish (\ref{(i)}). Provided $\varepsilon$ and $\eta$ are sufficiently small, we have
\begin{align*}
 \left|\tilde \xi_1+\cdots+\tilde \xi_{j-d}+ \frac{\xi_1+\cdots+\xi_d}m  \right|
 &\leq (j-d)\eta+\frac{dx_0(1+\varepsilon)}m\\
 &\leq m\eta+\frac{m-1}mx_0(1+\varepsilon) <x_0
\end{align*}
and (\ref{(i)}) readily follows from these inequalities.
\end{proof}

Next, we finally prove that the operator $A$ satisfies the hypothesis of Lemma \ref{lemme1.6}.

\begin{lemma}\label{mainlemma}
Let $B$ be a continuous linear operator on $\oc$ such that there exists $n\ge 0$ which satisfies $B=T_n B T_n$.
We suppose that the sequence
$(B^m z^j)_{m}$ converges to $0$ in $\oc$ as $m$ tends to infinity for every $j\ge 0$. Let $\delta\in (0,1)$ and we set $A=B+\delta S_{n+1}$. 
Then $A$ fulfills the assumptions of Lemma \ref{lemme1.6}.
\end{lemma}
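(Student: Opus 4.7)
The plan is to verify the Birkhoff-type criterion of Lemma \ref{lemme1.6} by combining the multiplicative identity $e_\alpha \cdot e_\beta = e_{\alpha+\beta}$ (which makes powers of exponential combinations easy to expand) with the asymptotic
$$A^N e_\beta \sim \phi(\beta)^N\,\varepsilon_\beta \qquad (N\to\infty),$$
which follows from Fact \ref{faitIV-1}: since $B = T_n B T_n$, one has $B e_\beta \in \cc_n[z]$, and the hypothesis $B^N z^j \to 0$ for every $j\ge 0$ forces $B^{N+1} e_\beta \to 0$ in $\oc$, so that the polynomial error term $(\phi(\beta)I - B)^{-1} B^{N+1} e_\beta$ vanishes in the limit.

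Fix $\cu,\cv \subset \oc$ nonempty and open, $\cw$ a neighborhood of $0$, and $m \ge 1$. First I would invoke Lemma \ref{lemmephi} to get $\eta,\varepsilon > 0$ and the sets $\ca,\cb$, enlarging $\cb$ if needed by the rotation group generated by $e^{2\pi i/(n+1)}$ so that Lemma \ref{lemme3} applies, and excising $\phi^{-1}(\sigma(B))$ (which is finite). Using Lemma \ref{lemme3}, I would pick a target $v = \sum_{q=1}^{k}\gamma_q\,\varepsilon_{\beta_q} \in \cv$ with pairwise distinct $\beta_q \in \cb$. Next, since the linear span of $\{e_{\tilde\beta} : \tilde\beta \in \ca\}$ is dense in $\oc$ (any continuous functional vanishing on it is entire in $\tilde\beta$ and zero on a disk, hence identically zero), I would choose $u_0 = \sum_p \tilde\gamma_p\, e_{\tilde\beta_p} \in \cu$ with $\tilde\beta_p \in \ca$. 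Finally, I would set
$$u \coloneqq u_0 + \tilde u, \qquad \tilde u \coloneqq \sum_{q=1}^{k} c_q\, e_{\beta_q/m},$$
where $c_q$ is selected so that $c_q^m\,\phi(\beta_q)^N = \gamma_q$. Because $|\phi(\beta_q)| = (1+\varepsilon)^{n+1} > 1$, the numbers $c_q$ tend to $0$ geometrically in $N$, so for all $N$ large enough $\tilde u$ is negligible and $u$ remains in $\cu$.

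Expanding $u^j = \sum_{d=0}^{j}\binom{j}{d} u_0^{j-d}\tilde u^d$ by the multinomial theorem, each resulting monomial is a scalar multiple of $e_\omega$ with $\omega = \tilde\xi_1 + \cdots + \tilde\xi_{j-d} + (\xi_1 + \cdots + \xi_d)/m$ for some $\tilde\xi_i \in \ca$ and $\xi_i \in \cb$. Applying $A^N$ and the asymptotic above yields
$$A^N u^j = \sum\,(\text{scalar})\cdot \phi(\omega)^N\,\varepsilon_\omega + \rho_N,$$
with $\rho_N \to 0$ in $\oc$ (the scalars stay bounded while the Fact \ref{faitIV-1} error vanishes). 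For $j < m$, every index $d$ satisfies $d < m$, and Lemma \ref{lemmephi}(i) gives $|\phi(\omega)| < 1$ uniformly over the finite index set, so $A^N u^j \to 0$ and lies in $\cw$ for $N$ large. For $j = m$, the contributions with $d < m$ still vanish; in the $d = m$ contribution, the diagonal terms (all $\xi_i$ equal to a single $\beta_q$) sum to $\sum_q c_q^m\,\phi(\beta_q)^N\,\varepsilon_{\beta_q} = v$.

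The hard part will be showing that the off-diagonal terms in the $d = m$ contribution vanish in the limit. Each such summand has modulus bounded by $\prod_i |c_{q_i}| \cdot |\phi((\beta_{q_1}+\cdots+\beta_{q_m})/m)|^N$ times the norm of $\varepsilon_\omega$; substituting $|c_{q_i}| = |\gamma_{q_i}|^{1/m}|\phi(\beta_{q_i})|^{-N/m}$ collapses the $N$-dependence into
$$\Bigl(\,|\phi((\beta_{q_1}+\cdots+\beta_{q_m})/m)|\;\big/\;{\textstyle\prod_i}|\phi(\beta_{q_i})|^{1/m}\Bigr)^{\!N},$$
and Lemma \ref{lemmephi}(ii) supplies a uniform bound $\rho < 1$ on the base, over the finite collection of non-diagonal multi-indices. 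Thus the off-diagonal part decays exponentially in $N$, so $A^N u^m \to v$ in $\oc$ and $A^N u^m \in \cv$ for $N$ large, verifying the criterion of Lemma \ref{lemme1.6}.
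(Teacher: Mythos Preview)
Your proposal follows essentially the same route as the paper's proof: approximate the target in $\cv$ by a combination $\sum_q \gamma_q\,\varepsilon_{\beta_q}$ with $\beta_q\in\cb$ (via Lemma \ref{lemme3}), approximate an element of $\cu$ by exponentials $e_\alpha$ with $\alpha\in\ca$, add the small correction $\sum_q c_q\,e_{\beta_q/m}$ with $c_q^m\phi(\beta_q)^N=\gamma_q$, then use Fact \ref{faitIV-1} together with the two estimates of Lemma \ref{lemmephi} to control $A^N u^j$ term by term. The case splitting (the $d<m$ terms killed by \eqref{(i)}, the off-diagonal $d=m$ terms killed by \eqref{(ii)}, the diagonal $d=m$ terms reconstituting $v$) is exactly the paper's.

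There is one technical point you have glossed over. Excising $\phi^{-1}(\sigma(B))$ from $\cb$ is not enough: Fact \ref{faitIV-1} has to be applied not to the original $\beta_q$'s but to \emph{every} argument $\omega=\tilde\xi_1+\cdots+\tilde\xi_{j-d}+(\xi_1+\cdots+\xi_d)/m$ that appears in the multinomial expansion, and nothing prevents such an $\omega$ from landing in the finite set $\phi^{-1}(\sigma(B))$. The paper handles this after the $\alpha_i,\beta_j$ have been chosen: for fixed $p,q,m$ the set of tuples $((\alpha_i),(\beta_j))\in\ca^p\times\cb^q$ for which some such $\omega$ falls into $\phi^{-1}(\sigma(B))$ is a finite union of closed nowhere-dense sets, so one may perturb the $\alpha_i,\beta_j$ slightly---using the continuity of $\alpha\mapsto e_\alpha$ and $\beta\mapsto\varepsilon_\beta$ away from $\phi^{-1}(\sigma(B))$---so as to avoid them while keeping $u_0\in\cu$ and $v\in\cv$. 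With that adjustment your argument is complete.
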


\begin{proof} Let $\cu$ and $\cv$ be two open sets in $\oc$, let $\cw$ be a neighborhood of $0$ in $\oc$ and let $m$ be a positive integer. Let $\ca$ and $\cb$ be the subsets of $\C$ given by Lemma \ref{lemmephi}. Since $\ca$ has accumulation points in $\C$, the linear vector space $\vspan{e_\alpha\ ; \ \alpha\in\ca}$ is dense in $\oc$. Thus there exist $a_1,\ldots, a_p\in\cc$ and $\alpha_1,\ldots, \alpha_p\in\ca$ such that the function $\sum_{j=1}^ pa_je_{\alpha_j}$ belongs to $\cu$.
\par\smallskip
Since $\cb$ is invariant by the rotation of angle $\frac{2\pi}{n+1}$, and since $\cb$ has accumulation points, Lemma \ref{lemme3} asserts that $\vspan{\varepsilon_\beta\ ; \ \beta\in\cb}$ is dense in $\oc$. Thus there exist $b_1,\ldots, b_q\in\cc$ and $\beta_1,\ldots, \beta_q\in\cb$ such that the function $\sum_{j=1}^ qb_j\varepsilon_{\beta_j}$ belongs to $\cv$.
\par\smallskip
During the forthcoming computations, quantities of the form $$\phi\left(\sum_{j=0}^p r_j\alpha_j+\frac1m \sum_{j=0}^q s_j\beta_j\right)$$ will appear, where the  $r_j$'s and $s_j$'s are positive integers such that $\sum_{j=1}^pr_j+\sum_{j=1}^q s_j\leq m$. We will need to be able to ensure that the sums $\sum_{j=0}^p r_j\alpha_j+\frac1m \sum_{j=0}^q s_j\beta_j$  do not belong to the spectrum of $B$. In order to ensure this condition, we notice that for all tuples $r=(r_1,\ldots,r_p)\in\nn^p$ and $s=(s_1,\ldots, s_q)\in\nn^q$ such that $\sum_{j=1}^p r_j+\sum_{j=1}^q s_j \leq m$, the set
$$E_{r,s}=\left\{
\left((\alpha_1,\ldots, \alpha_p),(\beta_1,\ldots,\beta_q)\right)\in\ca^p\times\cb^q\ ; \  
\phi\left(\sum_{j=0}^p r_j\alpha_j+\frac1m \sum_{j=0}^q s_j\beta_j\right) \in\spectre{B}
\right\}$$
is a closed subset of $\ca^p\times\cb^q$ with empty interior. Since  $\sum_{j=1}^p r_j+\sum_{j=1}^q s_j \leq m$, there are only finitely many such sets. Therefore, the set 

\begin{equation} \label{ensemble}
\ca^p\times\cb^q\setminus \left(\bigcup_{\genfrac{}{}{0pt}{}{r\in\nn^p,s\in\nn^q}{\sum_{j=1}^pr_j+\sum_{j=1}^qs_j\leq m}} E_{r,s}\right)
\end{equation}
 is dense in $\ca^p\times\cb^q$.
 \par\medskip
 
We now observe that $e_{\alpha'}$ tends to $e_\alpha$ in $\oc$ as $\alpha'$ tends to $\alpha$, and that $\varepsilon_{\beta'}$ tends to $\varepsilon_\beta$ in $\oc$ as $\beta'$ tends to $\beta$, with $\beta,\beta'\in\cc\setminus \phi^{-1}(\spectre{B})$. The first assertion is obvious. In order to prove the second assertion, let $r$ be a positive integer.
We equip $\cc_n[z]$ with the norm $N_r$ and denote by $\cn_r$ the norm on the space $\cb(\cc_n[z])$ of linear continuous operators  on  $\cc_n[z]$  induced by $N_r$.
We notice that for every $w\in\cc\setminus \spectre{B}$, for every polynomial $P$, $(wI-B)^{-1}P$ is still a polynomial, and that
the map $\Theta : w\mapsto (wI-B)^{-1}B$ from $\cc\setminus\spectre{B}$ into $ \cb(\cc_n[z])$ is well defined and continuous. 

Now for all $\beta,\beta'\in\cc\setminus \phi^{-1}(\spectre{B})$, we have
\begin{align*}
    N_r\left(\varepsilon_{\beta'}-\varepsilon_{\beta}\right)
    =& N_r\left(e_{\beta'}-e_\beta + \Theta(\phi(\beta'))T_n e_{\beta'} -  \Theta(\phi(\beta))T_n e_{\beta} \right)\\
    \leq& N_r\left(e_{\beta'}-e_\beta\right)
    +\cn_r\left(\Theta(\phi(\beta'))-\Theta(\phi(\beta))\right) N_r\left(e_{\beta'}\right)\\
    &+\cn_r\left(\Theta(\phi(\beta)\right) N_r\left(e_{\beta'}-e_\beta\right).
\end{align*}
Therefore,  the continuity of $\Theta$ implies that 
 $N_r\left(\varepsilon_{\beta'}-\varepsilon_{\beta}\right)$ tends to zero as  $\beta'\in\cc\setminus \phi^{-1}(\spectre{B})$ tends to $\beta\in\cc\setminus \phi^{-1}(\spectre{B})$.
 Hence, $\varepsilon_{\beta'}$ tends to $\varepsilon_\beta$ in $\oc$ when $\beta'\in\cc\setminus \phi^{-1}(\spectre{B})$ tends to $\beta\in\cc\setminus \phi^{-1}(\spectre{B})$.
\par\medskip
Therefore, by  choosing $\left((\alpha'_1,\ldots, \alpha'_p),(\beta'_1,\ldots,\beta'_q)\right)$ in the set defined in (\ref{ensemble}) close enough to 
$\left((\alpha_1,\ldots, \alpha_p),(\beta_1,\ldots,\beta_q)\right)$, we can assume that for every $r\in\nn^p$ and every $s\in\nn^q$ such that ${\sum_{j=1}^pr_j+\sum_{j=1}^qs_j\leq m}$, the complex number 
$\phi\left(\sum_{j=0}^p r_j\alpha_j+\frac1m \sum_{j=0}^q s_j\beta_j\right)$ does not belong to $\spectre{B}$.

\par\smallskip
For $N\ge 0$ and $j=1,\ldots, q$, let $c_j(N)$ be a complex number such that $$c_j(N)^m=\frac{b_j}{\phi(\beta_j)^N}\cdot$$ We set $$u_N=\sum_{l=1}^p a_le_{\alpha_l} + \sum_{j=1}^q c_j(N) e_{\frac{\beta_j}m}.$$
\par\smallskip
For every $\beta\in\cb$, we have $|\beta|>x_0$, thus $|\phi(\beta)|>1$ and the sequence $$\left( \sum_{j=1}^q c_j(N) e_{\frac{\beta_j}m}\right)_N$$ converges to $0$ in $\oc$ as $N$ goes to infinity. Consequently, $u_N$ belongs to $\cu$ if $N$ is large enough.
\par\smallskip
We now compute the quantities $u_N^k$ and $A^Nu_N^k$ for  every $k\in\{1,\ldots, m\}$. For every $d$-tuple $L=(l_1,\ldots, l_d)\in \{1,\ldots, p\}^d$, we put $a_L=a_{l_1}\cdots a_{l_d}$ when $d\geq 1$ and $a_\emptyset =1$. For every $J=(j_1,\ldots, j_{k-d})\in \{1,\ldots, q\}^{k-d},$ we put $c_J(N)= c_{j_1}(N)\cdots c_{j_{k-q}}(N) $ when $k-d\geq 1$, and $c_\emptyset(N)=1$. We have
\begin{align*}
 u_N^k&=\sum_{d=0}^k \sum_{\genfrac{}{}{0pt}{}{L\in\{1,\ldots,p\}^{k-d}}{J\in\{1,\ldots,q\}^{d}}} h(L,J) a_Lc_J(N) e_{\alpha_{l_1}+\cdots+\alpha_{l_{k-d}}+\frac1m(\beta_{j_{1}}+\cdots+\beta_{j_{d}})}
\end{align*}
where the $h(L,J)$'s are  constants depending only on $L$ and $J$ but not on $N$ (cf.\ the proof of \cite{Bay1}*{Theorem 2.1}).
\par\smallskip
Setting $\lambda={\alpha_{l_1}+\cdots+\alpha_{l_{k-d}}+\frac1m(\beta_{j_{1}}+\cdots+\beta_{j_{d}}})$, we get from Fact \ref{faitIV-1} that
$$c_J(N) A^Ne_\lambda= c_J(N) \left(\phi(\lambda)^N \varepsilon_\lambda-\left(\phi(\lambda)I-B\right)^{-1}B^{N+1}e_\lambda\right).$$

\begin{enumerate}[(a), itemsep=1ex]
 \item If $k<m$, for every $d\in\{0,\ldots, k\}$ it follows from (\ref{(i)}) that $|\phi(\lambda)|<1$. Since the sequence $(c_J(N))_N$ is bounded, it follows that $\left(c_J(N)\phi(\lambda)^N\varepsilon_\lambda\right)_N$ converges to $0$ in $\oc$. Since $\left(B^{N+1}e_\lambda\right)_N$ also converges to $0$ in $\oc$, $\left( c_J(N) A^Ne_\lambda\right)_N$ converges to $0$, and so there exists $N$ sufficiently large such that for every $k\in\{1,\ldots, m-1\}$, the function $A^Nu_N^k$ belongs to $\cw$.
 
 \item \label{(b)} If $k=m$ and $d<m$, again from (\ref{(i)}) in Lemma \ref{lemmephi} we have $|\phi(\lambda)|<1$, and since $(c_J(N))_N$ is a bounded sequence, $\left( c_J(N) A^Ne_\lambda\right)_N$ converges to $0$ in $\oc$.
 
 \item \label{(c)} If $k=d=m$ and if the $\beta_j$'s are not all equal, then
 \begin{align*}
  \left|c_J(N) \phi(\lambda)^N\right| &= \left|b_{j_1}\cdots b_{j_m}\right|^{\frac1m} 
  \left|
  \frac{\phi\left(\frac{\beta_{j_1}+\cdots+\beta_{j_m}}{m}\right)}
  {\phi\left(\beta_{j_1}\right)^{\frac1m}\cdots \phi\left(\beta_{j_m}\right)^{\frac1m}}
  \right|^{N}.
 \end{align*}
 Assertion (\ref{(ii)}) in Lemma \ref{lemmephi} yields that
 $$\left|
  \frac{\phi\left(\frac{\beta_{j_1}+\cdots+\beta_{j_m}}{m}\right)}
  {\phi\left(\beta_{j_1}\right)^{\frac1m}\cdots \phi\left(\beta_{j_m}\right)^{\frac1m}}
  \right|<1,$$ 
  so that $(c_J(N) \phi(\lambda)^N \varepsilon_\lambda)_N$ converges to $0$ in $\oc$.
  Since $\left(B^{n+1}e_\lambda\right)_N$ converges to $0$ in $\oc$, it follows again that $\left( c_J(N) A^Ne_\lambda\right)_N$ converges to $0$ in $\oc$.
  
  \item \label{(d)} Finally, if $k=m=d$ and if all the $\beta_j$'s are equal, then $\lambda=\beta_{j_1}$, $J=(j_1,\ldots, j_1)$ and 
  \begin{align*}
   c_J(N)A^Ne_\lambda
   &=c_{j_1}(N)^m \phi\left(\beta_{j_1}\right)^N \varepsilon_{\beta_{j_1}} - \left(\phi(\beta_{j_1})I-B\right)^{-1} B^{N+1} e_{\beta_{j_1}}\\
   &=b_{j_1}\varepsilon_{j_1}-\left(\phi\left(\beta_{j_1}\right)I-B\right)^{-1} B^{N+1} e_{\beta_{j_1}}.
  \end{align*}
Since $(B^{N+1} e_{\beta_{j_1}})_N$ converges to $0$ in $\oc$, it follows that $\left( c_J(N)A^Ne_\lambda\right)_N$ converges to $b_{j_1} \varepsilon_{j_1}$.
\end{enumerate}
\par\smallskip
From assertions \ref{(b)}-\ref{(d)} above, we finally deduce that the sequence $\left(A^Nu_N^m)\right)_N$ converges to $\sum_{j=1}^q b_j \varepsilon_j$, and so there exists $N$ large enough such that $A^Nu_N^m$
belongs to $\cv$. This ends the proof of Lemma \ref{mainlemma}.
\end{proof}
\par\medskip
\begin{proof}[Proof of Theorem \ref{maintheorem}:]
  Let $(\cu_p)_{p\geq 1}$ be a basis of open sets of $\oc$, and let $(\cw_r)_{r\geq 1}$ be a basis of neighborhoods of $0$ in $\oc$. Now let
\begin{align*}
\cg\coloneqq& \left\{\vphantom{T^N(u^n) }
A\in \clmk\ ; \ \forall p,q,r\geq 1, \forall m\geq 1, \exists u\in\cu_p,\exists N\geq 1,\right.\\ &\hskip 20pt \left.\text{ such that } \forall 0\leq n<m, T^N(u^n) \in \cw_r \text{ and }T^N(u^m)\in\cu_q
\right\}.
\end{align*}

Therefore, in order to prove Theorem \ref{maintheorem}, it suffices to prove that $\cg$ is a dense  $G_\delta$ set.
\par\smallskip
We can rewrite $\cg$ as 
$$\cg= \bigcap_{p,q,r,m\geq 1}\bigcup_{\genfrac{}{}{0pt}{}{u\in\cu_p}{N\geq 1}}\cg_{p,q,r,m,u,N}
 ,$$
 where
$$\cg_{p,q,r,m,u,N}\coloneqq \{T\in\clmk\ ; \ \forall 0\leq n<m, \; T^N u^m\in\cw_r\text{ and } T^Nu^m\in\cu_q\}.$$ It suffices to show that $\cg_{p,q,r,m,u,N}$ is an open set in $\clmk$.
Let $T_0$ belong to $\cg_{p,q,r,m,u,N}$. If $T$ is sufficiently \sot-close to $T_0$, then Claim \ref{claim} implies that for every $0\leq n<m$, $T^N u^n$ belongs to $\cw_r$ and $T^N u^m$ belongs to $\cu_q$. Thus $\cg_{p,q,r,m,u,N}$ is indeed \sot-open and $\cg$ is a $G_\delta$ set. 
\par\smallskip

We now check that $\cg$ is dense in $\clmk$. Let $A$ belong to $\clmk$ and let $\cv$ be a \sot-neighborhood of $A$  in $\clmk.$
It follows from Corollary \ref{corollaire} that there exist $B\in\clmk$, $\delta\in(0,1)$ and $n\geq 1$ such that $B+\delta S_{n+1}$ belongs to $\cv$, $B=T_nBT_n$ and $B^{n+1}=0$. By Lemma \ref{mainlemma}, the operator $B+\delta S_{n+1}$  satisfies the hypothesis of Lemma \ref{lemme1.6} and thus belongs to $\cg$. Hence $\cg$ is dense in $\clmk$.
This concludes the proof of Theorem \ref{maintheorem}.

\end{proof}

\section{Proof of Theorem \ref{Theorem 2} and related results}\label{section5}
 In this section, the setting is the following: $X$ is one of the (complex) Banach spaces $\ell_{p}(\N)$, $1\le p<+\infty$, or $c_{0}(\N)$. We endow $X$ with the coordinatewise product, so that it 
becomes a Banach algebra. For every $M>1$, a typical operator $T\in(\bmx,\sot)$ is hypercyclic, see the proof of \cite{GMM1}*{Proposition 2.3} which can be adapted in a straightforward way from the Hilbertian setting to the case where $X=\ell_{p}(\N)$, $1\le p<+\infty$, or $X=c_{0}(\N)$. Theorem \ref{Theorem 2} augments this statement by showing that, in fact, a typical $T$ admits a hypercyclic algebra.
\begin{proof}[Proof of Theorem \ref{Theorem 2}]
Let $(U_{q})_{q\ge 1}$ be a basis of the topology of $X$ consisting of open balls. We write $U_{q}=B(x_{q},\rho_{q})$ for each $q\ge 1$, where $x_{q}$ is the center of the ball and $\rho_{q}$ its radius. 
For each $s\ge 1$, we set $W_{s}=B(0,2^{-s})$, so that $(W_{s})_{s\ge 1}$ is a basis of neighborhoods of $0$ in $X$. {Set also $\mathcal{U}_{{q}}=B(x_{q},\frac{1}{2}\rho_{q})$ and $\mathcal{W}_{{s}}=B(0,2^{-(s+1)})$, so that $\overline{\mathcal{U}}_{q}\subseteq {U}_{q}$ and $\overline{\mathcal{W}}_{s}\subset {W}_{s}$.} Let 
$
\bigl(U_{q_{l}},U_{r_{l}},W_{s_{l}},m_{_{0,l}},m_{_{1,l}}\bigr)_{l\ge 1}
$
be an enumeration of all the tuples $(U,V,W,m_{_{0}},m_{_{1}})$, where $U$ and $V$ belong to the set $\{U_{q}\;;\;q\ge 1\}$, $W$ belongs to the set 
$\{W_{s}\;;\;s\ge 1\}$, and $(m_{_{0}},m_{_{1}})$ is a pair of positive integers with $m_{_{0}}<m_{_{1}}$.
\par\medskip
Proceeding as in the proof of Theorem \ref{maintheorem}, we consider the set 
\begin{align*}
 G_{M}(X)=\bigl\{
 T\in \bmx\;&;\forall l\ge 1,\;\exists u_{l}\in U_{q_{l}},\; \exists k_{l}\ge 1\;\textrm{such that }
 T^{k_{l}}\bigl(u_{l}^{m_{_{0,l}}}\bigr)\in U_{r_{l}}\\
 &\textrm{and } T^{k_{l}}\bigl(u_{l}^{n}\bigr)\in W_{s_{l}}
  \;\textrm{for every}\ n\ \textrm{with}\ m_{_{0,l}}<n\le m_{_{1,l}}
  \bigr\}\cdot
\end{align*}
Using the fact that all the maps $T\longmapsto T^{k}$, $k\ge 1$, from $(\bmx,\sot)$ into $\bx$ are continuous \cite{GMM1}*{Lemma 2.1}, it is easy to show that $G_{M}(X)$ is a
$G_{\delta }$ set in $(\bmx,\sot)$. By \cite{BCP}*{Corollary 2.4} (or by Lemma \ref{lemme1.6} in Section \ref{section4} above), every operator belonging to
$G_{M}(X)$ admits a hypercyclic algebra. In order to show that the property of having a hypercyclic algebra is typical in $(\bmx,\sot)$, it thus suffices to show that $G_{M}(X)$ is $\sot$-dense in 
$\bmx$. 
\par\smallskip
Let $d$ be a distance on $\bmx$ which induces the \sot\ and turns $\bmx$ into a complete separable metric space. The main step in order to show the density of $G_{M}(X)$ in $(\bmx,\sot)$ is the following lemma.

\begin{lemma}\label{Lemme 4 refait}
 Let $S\in\bmx$ with $\norm{S}<M$, and let $\varepsilon >0$. Let also  $\mathcal{U}$ and $\mathcal{V}$ be two nonempty open subsets of $X$, let $\mathcal{W}$ be an open neighborhood of $0$, and let $m_{_{0}}<m_{_{1}}$ be two positive integers.
 Then there exists an operator $T\in\bmx$ with $\norm{T}<M$, an integer $k\ge 1$ and a vector $b\in X$ such that:
\begin{enumerate}[\normalfont (a)]
 \item  $d(T,S) <\varepsilon$; 
 \item  $b\in \mathcal{U}$;
 \item  $\smash[t]{T^{k}\bigl(b^{m_{_{0}}}\bigr)\in \mathcal{V}}$ and
 $\;\forall\,m_{_{0}}<n\le m_{_{1}}$, $\smash[t]{T^{k}\bigl(b^{n}\bigr)\in {\mathcal{W}}}$.
 \end{enumerate}
\end{lemma}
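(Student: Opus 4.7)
The plan is to construct $T$ as an SOT-small perturbation of $S$ that attaches a family of weighted shift chains on pairwise disjoint strings of very high-index basis vectors, and to take $b$ as the sum of a finite-support approximation of $x_q$ plus small seeds for these chains. The endpoints of the chains will be tuned to make the iterates $T^k b^n$ behave exactly as required.

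First I would approximate the centers $x_q$ of $\mathcal U$ and $x_r$ of $\mathcal V$ by their truncations $\tilde x_q:=P_N x_q$ and $\tilde x_r:=P_N x_r$, for $N$ large enough that these truncations lie well inside $\mathcal U$ and $\mathcal V$ respectively, and then choose a threshold $I_1\ge N$ so large that any $T\in\bmx$ agreeing with $S$ on $\mathrm{span}(e_1,\ldots,e_{I_1})$ satisfies $d(T,S)<\varepsilon$; this uses that the tail of the SOT metric on the dense test sequence is geometrically summable together with the uniform bound $\|T-S\|\le 2M$. Setting $q:=m_1-m_0+1$, I would fix distinct small scalars $\eta_{m_0},\ldots,\eta_{m_1}\in(0,1)$ and, for each $s\in\{m_0,\ldots,m_1\}$, a chain $j_0^{(s)}<j_1^{(s)}<\cdots<j_{k-1}^{(s)}$ of $k$ indices greater than $I_1$, the chains being pairwise disjoint. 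Then take
\[
b := \tilde x_q + \sum_{s=m_0}^{m_1}\eta_s\,e_{j_0^{(s)}}.
\]
Because of the disjoint supports we have $b^n=\tilde x_q^n+\sum_s\eta_s^n e_{j_0^{(s)}}$ for every $n\ge 1$, and with the $\eta_s$'s small enough, $b\in\mathcal U$.

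Next, define $T$ by $Te_i=Se_i$ outside the union of the chains, $Te_{j_l^{(s)}}=\alpha e_{j_{l+1}^{(s)}}$ for $0\le l<k-1$, and $Te_{j_{k-1}^{(s)}}=y^\ast_s$ for vectors $y^\ast_s$ supported in $\{1,\ldots,N\}$ to be determined. Choose $\alpha\in(\|S\|,M)$ close to $M$; since the chains have pairwise disjoint ranges and land either in $\{e_i:i>I_1\}$ or in $\{e_i:i\le N\}$, a disjoint-range norm computation (splitting the action of $T$ into components with disjoint coordinate supports) will give $\|T\|<M$ provided the $\|y^\ast_s\|$ are suitably controlled. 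By pushing the chain indices sufficiently far, the vectors $S^l\tilde x_q^n$ (for $0\le l\le k$, $m_0\le n\le m_1$) have negligible mass on the chain coordinates, so that $T^l\tilde x_q^n$ agrees with $S^l\tilde x_q^n$ up to an arbitrarily small error; hence $T^k\tilde x_q^n$ is essentially determined by $S$ alone. Letting $A_s:=T^k e_{j_0^{(s)}}=\alpha^{k-1}y^\ast_s$, the requirements $T^k b^{m_0}=x_r$ and $T^k b^n=0$ for $m_0<n\le m_1$ become the linear system
\[
\sum_{s=m_0}^{m_1}\eta_s^n\,A_s = \delta_{n,m_0}\,x_r-T^k\tilde x_q^n,\qquad n=m_0,\ldots,m_1,
\]
whose coefficient matrix $(\eta_s^n)$ is of Vandermonde type and hence invertible when the $\eta_s$ are distinct. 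Solving this system gives explicit $A_s$ and therefore $y^\ast_s$.

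The main obstacle is closing the norm estimates consistently. Keeping $b\in\mathcal U$ and ensuring that the residual contributions to $T^k b^n$ fit inside $\mathcal W$ force the $\eta_s$'s to be small, while inverting the Vandermonde matrix produces a factor of size roughly $\eta^{-m_0}$ (with $\eta=\min_s\eta_s$) and the right-hand side contains the term $T^k\tilde x_q^n$ of size at most $M^k\|\tilde x_q\|^n$; the constraint $\|y^\ast_s\|\le M$, equivalent to $\|A_s\|\lesssim M^k$, thus becomes $\eta^{m_0}M^k\gtrsim\|x_r\|+M^k\|\tilde x_q^{m_0}\|$. Because $M>1$, the exponential factor $M^k$ can be made as large as we wish, leaving room to choose the $\eta_s$'s small enough for $\mathcal U$-membership yet large enough for the Vandermonde endpoint norms to fit under $M$. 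The delicate point is the order of parameter choices — $N$, then $I_1$, then $\{\eta_s\}$, then $k$, and finally the chain indices — which must be arranged so that every estimate closes simultaneously and the strict inequality $\|T\|<M$ is preserved. Once this bookkeeping is carried out, the resulting $T\in\bmx$ together with $b$ and $k$ satisfy (a)-(c).
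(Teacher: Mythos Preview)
Your approach differs substantially from the paper's, and the main gap is the norm bound $\|T\|<M$. You set $Te_i=Se_i$ for every non-chain index $i$; since $S$ is an arbitrary operator with $\|S\|<M$, the vectors $Se_i$ may have nonzero components on the chain coordinates, so the range of this part of $T$ overlaps the range of the shift part of the chains, and the claimed ``disjoint-range norm computation'' does not apply. Concretely, in $c_0$ the operator norm is the supremum of the $\ell_1$-norms of the rows; the row indexed by a chain-interior coordinate $j_{l+1}^{(s)}$ receives $\alpha$ from the shift column $j_l^{(s)}$ \emph{plus} the entire $j_{l+1}^{(s)}$-th row of $S$ from the non-chain columns, and that row of $S$ need not be small (take $S$ close to the identity). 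Hence the row sum can be as large as $\alpha+\|S\|$, which may exceed $M$. The same overlap obstructs the bound for $\ell_p$ with $p>1$; only in $\ell_1$, where $\|T\|=\sup_i\|Te_i\|$, does column-by-column reasoning suffice. A secondary issue: the right-hand side of your Vandermonde system involves $T^k\tilde x_q^{\,n}$, which depends on the unknowns $y_s^\ast$; and even after replacing $T^k$ by $S^k$ there, the solution $A_s$ will in general not be supported in $\{1,\dots,N\}$ (since $S^k\tilde x_q^{\,n}$ is not), so the hypothesis ``$y_s^\ast$ supported in $\{1,\dots,N\}$'' cannot be imposed as stated.

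The paper sidesteps all of this by not extending $S$: it replaces $S$ by $P_NSP_N$ on $E_N$ and attaches a \emph{single} weighted backward shift on the complementary coordinates, with a small weight $\delta$ on the first block and weight $M'<M$ thereafter; the small transition weight absorbs the overlap with $E_N$ and one gets $\|T\|=M'$ cleanly. Crucially, the paper then uses the eigenvector structure of this $T$ --- the span of $\ker(T-\lambda)$ for $|\lambda|<1$ is dense --- to produce $u$ with $T^ku\to 0$ \emph{without any cancellation}, so no Vandermonde system is needed, and a final conjugation by a near-identity operator $L$ turns the auxiliary ``fake powers'' into genuine coordinatewise powers. Your scheme could likely be repaired by first truncating $S$ to $P_{I_1}SP_{I_1}$ (so that its range is confined and a block-diagonal norm estimate becomes available) and then carefully decoupling the Vandermonde system from $T$, but as written the norm control is the missing idea.
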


\begin{proof}
Denote by $(e_{n})_{n\ge 0}$ the canonical basis of $X$, and  let $E_{N}\coloneqq[e_{0},e_{1},\dots,e_{N}]$ be the linear span of the first $N+1$ basis vectors in $X$, $N\ge 0$. Let also $P_N$ denote the canonical projection of $X$ onto $E_N$, and set $S_N=P_NSP_N$. We have $\norm{S_N}\le\norm{S}<M$.
Let $\delta >0$ and $\gamma >0$ be such that $\max(1,\norm{S}+\delta )<M-\gamma <M$. 

Setting $M' \coloneqq M-\gamma $, we consider the operator $T$ acting on $X$ defined by
\[
Te_{n}=
\begin{cases}
 S_N e_{n}, &\textrm{for }\ 0\le n\le N\\
 \delta e_{n-(N+1)}, &\textrm{for }\ N+1\le n\le 2N+1\\
 M'e_{n-(N+1)}, &\textrm{for }\ n>2N+1.
\end{cases}
\]
Then $T$ is a bounded operator on $X$ with $\norm{T} =M'<M$. Moreover, $T$ can be made as close to $S$ as we wish with respect to the \sot, provided that $N$ is chosen sufficiently large. We fix $N$ so large that $d(T,S) <\dfrac{\varepsilon}{2}\cdot$
\par\smallskip
Operators of this form were introduced in \cite{GMM1}*{Proposition 2.10}, where it is
shown that for every complex number $\lambda $ with
$|\lambda |<M'$, the eigenspace $\ker(T-\lambda I)$ is $(N+1)$-dimensional. Whenever $\Lambda $ is a subset of the open disk $D(0,M')\subset \C$, which has an accumulation point  in 
$D(0,M')$, the eigenspaces $\ker(T-\lambda I)$, $\lambda \in\Lambda $, span a dense linear subspace of $X$. In particular, the vector space
\[
H_{-}(T) \coloneqq \textrm{span}\,[\,\ker(T-\lambda I)\;;\;|\lambda |<1]
\]
is dense in $X$. For every $k\ge 1$ and every $n\ge 0$, we have:
\begin{equation}\label{Equation 1}
 T^{k}e_{k(N+1)+n}=
\begin{cases}
 M'{}^{k-1}\delta e_{n}, &\textrm{if}\ 0\le n\le N\\
 M'{}^{k}e_{n}, &\textrm{if}\ n>N.
\end{cases}
\end{equation}

We choose a  vector $z\in \mathcal{U}$, with finite support, which we write as 
$z=\sum_{j=0}^{J}\gamma _{j}e_{j}$. Since 
$H_{-}(T)$ is dense in $X$, there exists for each $0\le j\le J$ a vector $f_{j}\in H_{-}(T)$ with $\norm{f_{j}-e_{j}} <\frac{\delta _{1}}{J+1}$, where 
$0<\delta _{1}<\varepsilon$ is sufficiently small (how small will be specified in the sequel). 
Letting $u=\sum_{j=0}^{J}\gamma _{j}f_{j}$, for $\delta _{1}$ sufficiently small, 
$u$ belongs to $\mathcal{U}$. We now set for every $n\ge 1$
\begin{equation}\label{Equation 2}
 \ti{u}^{*n}\coloneqq\sum_{j-0}^{J}\gamma _{j}^{n}f_{j}.
\end{equation}
Observe that $\ti{u}^{*n}$ belongs to $H_{-}(T)$ for every $n\ge 1$, so that 
\begin{equation*} 
 \norm{T^{k}\ti{u}^{*n}} \longrightarrow 0\quad\textrm{as}\quad k\longrightarrow +\infty.
\end{equation*}
We have $\ti{u}^{*1}=u$.
\par\medskip 
Let now $v$ be a vector belonging to $\mathcal{V}$, of the form $v=\sum_{i=0}^{d}\alpha _{i}e_{i}$, with $d>N$. For every $k\ge 1$, we set
\begin{equation} \label{Equation 4}
v^{(k)}\coloneqq\sum_{i=0}^{N}\Bigl(\dfrac{\alpha _{i}}{M'{}^{k-1}\delta}\Bigr)^{1/m_{_{0}}}e_{k(N+1)+i} +\sum_{i=N+1}^{d}\Bigl(\dfrac{\alpha _{i}}{M'{}^{k}}\Bigr)^{1/m_{_{0}}}e_{k(N+1)+i}.
\end{equation}
Since $M'>1$, we have
\begin{equation*} 
 \norm{v^{(k)}} \longrightarrow 0\quad\textrm{as}\quad k\longrightarrow+\infty.
\end{equation*}
For  $n = m_{_{0}}, m_{_{0}} +1, \dots, m_{_{1}}$ we have
\begin{equation*} 
 \bigl[v^{(k)}\bigr]^{n}=\sum_{i=0}^{N}\Bigl(\dfrac{\alpha _{i}}{M'{}^{k-1}\delta }\Bigr)^{n/m_{_{0}}}e_{k(N+1)+i}+
 \sum_{i=N+1}^{d}\Bigl(\dfrac{\alpha _{i}}{M'{}^{k}}\Bigr)^{n/m_{_{0}}}e_{k(N+1)+i}
\end{equation*}
(recall that the product considered on $X$ is the coordinatewise product with respect to the canonical basis). We thus have $T^{k}\bigl[v^{(k)}\bigr]^{m_{_{0}}}=v$ by (\ref{Equation 1}). Since 
$M'>1$, we also have 
\begin{equation*}  
 \norm{\bigl[v^{(k)}\bigr]^{m_{_{0}}}} \longrightarrow 0\quad\textrm{as}\quad k\longrightarrow+\infty.
\end{equation*}
On the other hand, if $m_{_{0}}<n\le m_{_{1}}$, we have
\[
T^{k}\bigl[v^{(k)}\bigr]^{n}=\sum_{i=0}^{N}\Bigl(\dfrac{\alpha _{i}}{M'{}^{k-1}\delta }\Bigr)^{n/m_{_{0}}}M'{}^{k-1}\delta \,e_{i}+\sum_{i=k+1}^{d}
\Bigl(\dfrac{\alpha _{i}}{M'{}^{k}}\Bigr)^{n/m_{_{0}}}M'{}^{k}\,e_{i}
\]
by (\ref{Equation 1}). So
\[
T^{k}\bigl[v^{(k)}\bigr]^{n}=\sum_{i=0}^{N}\alpha _{i}^{n/m_{_{0}}}\cdot\dfrac{1}{\bigl(M'{}^{k-1}\delta \bigr)^{n/m_{_{0}}-1}}\cdot e_{i}+\sum_{i=N+1}^{d}\alpha 
_{i}^{n/m_{_{0}}}\cdot\dfrac{1}{\bigl(M'{}^{k}\bigr)^{n/m_{_{0}}-1}}\cdot e_{i}
\]
and since $\frac{n}{m_{_{0}}}-1>0$, we obtain that
\begin{equation*} 
 \norm{T^{k}\bigl[v^{(k)}\bigr]^{n}} \longrightarrow 0\quad\textrm{as}\quad k\longrightarrow+\infty\quad\textrm{for every}\quad m_{_{0}}<n\le m_{_{1}}.
\end{equation*}
Summarising, we have shown that 
\begin{align}
 {u}+v^{(k)} &\longrightarrow u &&\textrm{as }\, k\longrightarrow+\infty\label{Equation 8}\\
 T^{k}\bigl(\ti{u}^{*m_{_{0}}}+\bigl[v^{(k)}\bigr]^{m_{_{0}}}\bigr) &\longrightarrow v&&\textrm{as }\, k\longrightarrow+\infty\label{Equation 9}\\
 T^{k}\bigl(\ti{u}^{*n}+\bigl[v^{(k)}\bigr]^{n}\bigr) &\longrightarrow 0&&\textrm{as }\, k\longrightarrow+\infty\quad \textrm{for every }\, m_{_{0}}<n\le m_{_{1}}.\label{Equation 10}
\end{align}
\par\medskip
We now define a bounded  operator $L$ on $X$ by setting
\[
Le_{j}=
\begin{cases}
 f_{j}, &\textrm{if}\ 0\le j\le J\\
 e_{j}, &\textrm{if}\ j>J.
\end{cases}
\]
Then we have 
\[
\norm{Lx-x} = \norm{\sum_{j=0}^{J}x_{j}(f_{j}-e_{j})} \le\max_{0\le j\le J}|x_{j}|
\sum_{j=0}^{J} \norm{f_{j}-e_{j}} \le \delta _{1} \cdot \norm{x} \quad \textrm{for every}\ x\in 
X,\] so that $\norm{L-I} \le\delta _{1}<\varepsilon$.  If $0<\delta_1 <1$, $L$ is invertible, and
\[
\norm{L^{-1}-I} \le\dfrac{\delta _{1}}{1-\delta _{1}}\le 2\delta _{1}
\]
as soon as $0<\delta _{1}<1/2$. Let now $x_{k}={u}+v^{(k)}$ and $y_{k}=L^{-1}({u}+v^{(k)})$. If $k$ is sufficiently large, the support of 
$v^{(k)}$ is disjoint from the interval $[0,J]$. We have
\begin{align*}
 y_{k}=\sum_{j=0}^{J}\gamma _{j}e_{j}&+\sum_{i=0}^{N}\Bigl(\dfrac{\alpha _{i}}{M'{}^{k-1}\delta }\Bigr)^{1/m_{_{0}}}e_{k(N+1)+i}
 +\sum_{i=N+1}^{d}\Bigl(\dfrac{\alpha _{i}}{M'{}^{k}}\Bigr)^{1/m_{_{0}}}e_{k(N+1)+i}
\end{align*}
so that for every $n\ge 1$, 
\begin{align*}
 y_{k}^{n}=\sum_{j=0}^{J}\gamma _{j}^{n}e_{j}&+\sum_{i=0}^{N}\Bigl(\dfrac{\alpha _{i}}{M'{}^{k-1}\delta }\Bigr)^{n/m_{_{0}}}e_{k(N+1)+i}
 +\sum_{i=N+1}^{d}\Bigl(\dfrac{\alpha _{i}}{M'{}^{k}}\Bigr)^{n/m_{_{0}}}e_{k(N+1)+i}.
\end{align*}
It follows that
\begin{equation}\label{Equation 10 prime}
 L(y_{k}^{n})=\ti{u}^{*n}+\bigl[v^{(k)}\bigr]^{n}\quad\textrm{for all}\ k\ \textrm{sufficiently large.}
\end{equation}
By  (\ref{Equation 8}), (\ref{Equation 9}), and (\ref{Equation 10}), combined with (\ref{Equation 10 prime}), we have, as $k$ goes to infinity,
\begin{align}
 x_{k}\longrightarrow u \quad &\textrm{and}\quad y_{k}\longrightarrow L^{-1}u\label{Equation 11}\\
 T^{k}L(y_{k}^{m_{_{0}}}) &\longrightarrow v \nonumber \\   
 T^{k}L(y_{k}^{n}) &\longrightarrow 0\quad\textrm{for every }\, m_{_{0}}<n\le m_{_{1}}. \nonumber 
\end{align}
Set now $T_{1}\coloneqq L^{-1}TL$. Then
{\begin{eqnarray*}
 \norm{T_{1}-T}&=& \norm{L^{-1}TL-T}\le \norm{L^{-1}T(L-I)}+ \norm{(L^{-1}-I)T}\\
&\le&  \norm{T}\,\delta _{1} (1+2\,\delta _{1})+\norm{T}\,.\, 2\,\delta _{1} <4\norm{T}\,\delta _{1},
\end{eqnarray*}
so that $\norm{T_{1}}<M$ and $d(T_{1},T)<\dfrac{\varepsilon}{2}$, provided that $\delta_1$ is sufficiently small.
We also deduce from the properties above that, as $k$ goes to infinity,
\begin{align}
 T_{1}^{k}(y_{k}^{m_{_{0,1}}})&\longrightarrow L^{-1}v\label{Equation 14}\\
 T_{1}^{k}(y_{k}^{n})&\longrightarrow 0\quad\textrm{for every }\, m_{_{0}}<n\le m_{_{1}}.\label{Equation 15}
\end{align}
Now, $L^{-1}u=z$ belongs to $\mathcal{U}$ and if $\delta _{1}$ is sufficiently small, $L^{-1}v$ belongs to $\mathcal{V}$. If we choose $k$ sufficiently large, and then set
$b=y_{k}$, we eventually obtain that 
$b\in \mathcal{U}$ (by (\ref{Equation 11})),
$T_{1}^{k}\bigl(b^{m_{_{0}}}\bigr)\in \mathcal{V}$ (by (\ref{Equation 14})}), and
$T_{1}^{k}\bigl(b^{n}\bigr)\in \mathcal{W}$ for every $m_{_{0}}<n\le m_{_{1}}$ (by (\ref{Equation 15}))
which is exactly what is required by Lemma \ref{Lemme 4 refait}.
\end{proof}

Returning to the proof of Theorem \ref{Theorem 2}, we now consider an operator $S\in\bmx$ with $\norm{S}<M$, and $\varepsilon>0$. Our aim is to construct an operator $T\in G_{M}(X)$ with $d(T,S)<\varepsilon$.
Let $(\varepsilon _{l})_{l\ge 1}$ be a sequence of positive numbers such that $\sum_{l\ge 1}\varepsilon _{l}<\varepsilon$.
Using Lemma \ref{Lemme 4 refait}, we can construct by induction on $l\ge 1$
 \begin{enumerate}
  \item [$\bullet$] a sequence  $(T_{l})_{l\ge 0}$ of operators on $X$, with $T_0=S$,
  \item [$\bullet$] a sequence $(b_{l})_{l\ge 1}$ of vectors of $X$,
  \item [$\bullet$] a sequence $(k_{l})_{l\ge 1}$ of positive integers
 \end{enumerate}
such that for every $l\ge 1$,
\begin{enumerate}[(a)]
 \item  $d(T_{l},T_{l-1})<\varepsilon_l$;
 \item $b_{l}\in {\mathcal{U}_{q_{l}}}\subseteq {U}_{q_{l}}$;
 \item  $\forall\,1\le l'\le l$,\quad $\smash[t]{T_{l}^{k_{l'}}\bigl(b_{l'}^{m_{_{0,l'}}}\bigr)\in {\mathcal{U}_{r_{l'}}}}$ and
 $\forall\,m_{_{0,l'}}<n\le m_{_{1,l'}}$, $\smash[t]{T_{l}^{k_{l'}}\bigl(b_{l'}^{n}\bigr)\in {\mathcal{W}_{s_{l'}}}}$.
\end{enumerate}
\par\smallskip
The fact that we can ensure, at each step $l\ge 1$, that condition (c) holds for every $1\le l'\le l$, and not only for $l$, relies on the observation that the maps $T\longmapsto T^k$, $k\ge 1$, are \sot-continuous from $\bmx$ into $\bx$. Thus if $T_l$ is constructed sufficiently close to $T_{l-1}$, i.e.\ if $d(T_{l},T_{l-1})$ is sufficiently small, our induction assumption implies that (c) is true for every $1\le l'< l$.
\par\smallskip
Since $(\bmx,d)$ is a complete metric space, the sequence of operators $(T_l)_{l\ge 1}$ converges in $(\bmx,d)$ to an operator $T\in\bmx$ such that $d(T,S)<\varepsilon$. Moreover, letting $l$ tend to infinity in (c) above, and using again the \sot-continuity of the maps $T\longmapsto T^k$, we obtain that for all $l\ge 1$,
 $\smash[t]{T^{k_{l}}\bigl(b_{l}^{m_{_{0,l}}}\bigr)\in \overline{{\mathcal{U}}}_{r_{l}}}\subseteq {{U}_{r_{l}}}$, and
 $\smash[t]{T^{k_{l}}\bigl(b_{l}^{n}\bigr)\in \overline{\mathcal{W}}_{s_{l'}}}\subseteq{W_{s_{l}}}$ for all $m_{_{0,l}}<n\le m_{_{1,l}}$.
Hence $T$ belongs to $G_M(X)$.
\par\smallskip
We have thus proved that $G_{M}(X)$ is dense in $(\bmx,\sot)$. This finishes the proof of Theorem \ref{Theorem 2}.
\end{proof}

There are other interesting topologies which can turn the operator balls $\bmx$ into Polish spaces. One of the most relevant is the so-called \emph{Strong$^{*}$ Operator Topology} (\sote): 
if 
$(T_{\alpha })$ is a net of operators in $\bx$, $T\in\bx$ and $T^* \in \mathcal{B}(X^*)$ is the adjoint of $T$, then we say that  $$\xymatrix{T_{\alpha }\ar[r]^{\sote}&T} \textrm{ if and only if }\xymatrix{T_{\alpha }\ar[r]^{\sot}&T}\textrm{ in }X \textrm{ and }\xymatrix{T_{\alpha 
}^{*}\ar[r]^{\sot}&T^{*}} \textrm{ in } X^{*}.$$

When $X$ is a Banach space with separable dual, the balls $\bmx$, $M>0$, are Polish spaces when endowed with the
\sote\ topology. See the works \cite{EisMat}, \cite{GMM1}, \cite{GMM2}, \cite{GMM3}, and \cite{GriMat} for a study of typical properties of operators 
for the \sote, as well as explanations on the relevance of this topology. 
\par\medskip
Theorem \ref{Theorem 2} admits the following analogue for the \sote\ topology.

\begin{theorem}\label{Theorem 2 bis}
 Let $X=\ell_{p}(\N)$, $1<p<+\infty$, or $X=c_{0}(\N)$, endowed with the coordinatewise product. Let $M>1$. A typical operator $T\in(\bmx,\emph{\sote})$ admits a hypercyclic 
 algebra.
\end{theorem}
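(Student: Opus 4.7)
The plan is to adapt the proof of Theorem \ref{Theorem 2} to the $\sote$ topology throughout. First I would define the set $G_M(X)\subseteq \bmx$ exactly as in the proof of Theorem \ref{Theorem 2}, so that every operator in $G_M(X)$ supports a hypercyclic algebra by \cite{BCP}*{Corollary 2.4}. It then suffices to show that $G_M(X)$ is a dense $G_\delta$ subset of $(\bmx,\sote)$. The $G_\delta$ property is immediate: $G_M(X)$ was already shown to be $G_\delta$ in the coarser topology $\sot$, so it is $G_\delta$ in $\sote$ as well.

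The real task is to establish $\sote$-density of $G_M(X)$. Following the scheme of the proof of Theorem \ref{Theorem 2}, this reduces to proving an $\sote$-version of Lemma \ref{Lemme 4 refait}, and then running the inductive construction of a sequence $(T_l)$ converging in a complete metric for $\sote$. The inductive step goes through verbatim once we have the $\sote$-analog of Lemma \ref{Lemme 4 refait}: the $\sote$-continuity of $T\mapsto T^k$ (inherited from its $\sot$-continuity, since $T^k$ tested on a vector in $X$ or a functional in $X^*$ is a $\sot$-continuous operation on $\bmx$ for both $X$ and $X^*$) ensures that the closed conditions on the limit operator are preserved.

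The principal obstacle is that the operator $T$ built in the proof of Lemma \ref{Lemme 4 refait} is generally not $\sote$-close to $S$: a direct computation of its adjoint gives, for each dual basis vector $e_k^*$ with $k\le N$,
\[
 T^*e_k^*=S_N^*e_k^*+\delta\,e_{k+N+1}^*,
\]
and the perturbation $\delta\,e_{k+N+1}^*$ has norm $\delta$, which does not decay with $N$ since $\delta>0$ is fixed. My plan to overcome this is to allow $\delta$ to depend on $N$: choose $\delta_N>0$ with $\delta_N\to 0$ (while keeping $\|S\|+\delta_N<M'$ for some fixed $M'<M$). Then, for any $x^*\in X^*$ with finite support contained in $[0,K]$ and $N\ge K$,
\[
 T^*x^* = P_N^*S^*x^* + \delta_N\sum_{k=0}^{K} x_k^*\,e_{k+N+1}^*.
\]
The first term converges in norm to $S^*x^*$ since $P_N^*\to I$ strongly on $X^*$ (this uses that $X^*$ is separable, i.e.\ that $1<p<+\infty$ or $X=c_0$), and the second term has norm at most $\delta_N\|x^*\|\to 0$. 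Combined with the unchanged $\sot$-convergence $T\to S$, this yields $T\to S$ in $\sote$.

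The next step is to verify that the rest of the argument of Lemma \ref{Lemme 4 refait} is unaffected by making $\delta$ depend on $N$. For each fixed $\delta_N>0$, the eigenstructure guaranteed by \cite{GMM1}*{Proposition 2.10} still provides $(N+1)$-dimensional eigenspaces for $|\lambda|<M'$; the identities (\ref{Equation 1}) remain valid with $\delta_N$ in place of $\delta$; and the definitions of $\ti u^{*n}$, $v^{(k)}$, and the operator $L$ go through unchanged. Since $N$ (hence $\delta_N$) is fixed before $k$ is chosen, the factor $\delta_N^{-1/m_0}$ appearing in $v^{(k)}$ is a constant, and $\|v^{(k)}\|\to 0$ as $k\to\infty$ is still driven by the factor $M'^{-k/m_0}$. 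Finally, the conjugation $T_1=L^{-1}TL$ satisfies $\|T_1-T\|\le C\delta_1$, so that $T_1$ is norm-close -- and hence $\sote$-close -- to $T$, and therefore to $S$. The hard part is thus the coordination between the two competing demands on $\delta_N$: it must be small enough for $T^*\to S^*$ strongly, yet compatible with the eigenvector construction, whose quantitative estimates involve $\delta_N^{-1}$. The key realization is that these estimates are needed only for a fixed $N$ at a time, so $\delta_N$ can be treated as a constant in the inner argument, and $\delta_N^{-1/m_0}$ is absorbed by taking $k$ sufficiently large. Once this separation of scales is exploited, the remainder of the proof of Theorem \ref{Theorem 2} transfers directly to the $\sote$ setting.
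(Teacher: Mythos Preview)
Your proposal is correct and follows the same route as the paper's (very brief) proof, which merely observes that $G_M(X)$ is $\sot$-$G_\delta$ hence $\sote$-$G_\delta$, and asserts that the construction in the proof of Theorem~\ref{Theorem 2} already yields $\sote$-density. The one point you make explicit and the paper leaves implicit is that the free parameter $\delta$ in Lemma~\ref{Lemme 4 refait} must be taken small to control the adjoint side; since $\delta$ is subject only to $\|S\|+\delta<M'$, this can be arranged directly without introducing a sequence $\delta_N\to 0$, though your version is equally valid.
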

\begin{proof}
 The proof is similar to that of Theorem \ref{Theorem 2}. Since the set $G_{M}(X)$ is \sot-$G_{\delta }$, it is also \sote-$G_{\delta }$. 
 And a look at the proof of Theorem \ref{Theorem 2} shows that $G_{M}(X)$ is in fact \sote-dense in $\bmx$.
\end{proof}

We say that  $T \in \bx$ is  \emph{dual hypercyclic} if both $T$ and its adjoint $T^*$ are hypercyclic. The first examples of such operators were obtained by Salas \cites{Salas,Sal07} and  Petersson~\cite{Pet06}, and it is an immediate consequence of the remark after \cite{GMM1}*{Proposition 2.3} (see also \cite{GriMat}*{Fact 2.1}) that when $X=\ell_{p}(\N)$, $1<p<+\infty$, or $X=c_{0}(\N)$, an $\sotstar$-typical  $T \in \Bmx$ is dual hypercyclic.
Using the same argument, we can deduce from Theorem \ref{Theorem 2 bis} the following result.

\begin{proposition}\label{dualhc}
If $X=\ell_{p}(\N)$, $1<p<+\infty$, or $X=c_{0}(\N)$, and $M>1$, a typical operator $T\in(\bmx,\emph{\sote})$ is such that both $T$ and $T^{*}$ admit a 
hypercyclic algebra (with respect to the coordinatewise product).
\end{proposition}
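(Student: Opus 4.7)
The plan is to combine Theorem \ref{Theorem 2 bis} with an adjoint symmetry argument, exactly as in the paragraph just preceding the statement where the dual hypercyclicity version is deduced. First, by Theorem \ref{Theorem 2 bis} applied to $X$, the set
\[
G_1 \coloneqq \{T\in\bmx \;;\; T\text{ admits a hypercyclic algebra}\}
\]
is comeager in $(\bmx,\sote)$, which takes care of the ``$T$'' half of the conclusion.

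Second, I would show that the set
\[
G_2 \coloneqq \{T\in\bmx \;;\; T^{*}\text{ admits a hypercyclic algebra on }X^{*}\}
\]
is likewise comeager in $(\bmx,\sote)$. The key tool is the adjoint homeomorphism: the map $\Phi \colon T\mapsto T^{*}$ from $(\bmx,\sote)$ onto the corresponding \sote-space of operators on $X^{*}$ is a homeomorphism (this is precisely the remark after \cite{GMM1}*{Proposition~2.3} used earlier to deduce dual hypercyclicity of a typical $T$). For $X=\ell_{p}(\N)$ with $1<p<+\infty$ we have $X^{*}=\ell_{q}(\N)$ with $1<q<+\infty$, which lies in the scope of Theorem \ref{Theorem 2 bis}; pulling back the comeager set given by that theorem via $\Phi^{-1}$ yields that $G_2$ is \sote-comeager.

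Finally, since $(\bmx,\sote)$ is Polish, hence Baire, the intersection $G_1\cap G_2$ is comeager, and this is exactly the typicality statement asserted by the proposition.

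\textbf{The main obstacle} will be the case $X=c_{0}(\N)$. Here $X^{*}=\ell_{1}(\N)$ does \emph{not} fall directly under Theorem \ref{Theorem 2 bis}, since $\ell_{1}^{*}=\ell_{\infty}$ is not separable and so the usual \sote\ topology on $\mathcal{B}_M(\ell_1)$ is not Polish. The remedy is to interpret \sote\ on $\mathcal{B}_M(\ell_{1})$ via the predual $c_{0}\subset\ell_\infty$ (this is exactly the topology that $\Phi$ transports from $(\mathcal{B}_M(c_0),\sote)$), and to check that the proof of Theorem \ref{Theorem 2 bis} goes through in this ``predual \sote'' setting. Equivalently, and perhaps more transparently, one can revisit the construction in Lemma \ref{Lemme 4 refait} and arrange, by a symmetric adjustment exploiting the coordinatewise product and the canonical basis, that the approximant $T$ satisfies the Birkhoff-type criterion of Lemma \ref{lemme1.6} simultaneously for $T$ and for $T^{*}$; once this is done, the same density/$G_{\delta}$ scheme used for Theorem \ref{Theorem 2 bis} produces directly the dense $G_{\delta}$ set $G_1\cap G_2$.
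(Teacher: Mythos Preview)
Your approach is correct and matches the paper's own proof, which consists of the single sentence preceding the statement. The subtlety you flag for $X=c_{0}$ is real and the paper does not spell it out; your first remedy is the natural one. Concretely, $\Phi\colon T\mapsto T^{*}$ is continuous from $(\mathcal{B}_{M}(c_{0}),\sote)$ to $(\mathcal{B}_{M}(\ell_{1}),\sot)$, so $\Phi^{-1}(G_{M}(\ell_{1}))$ is $G_{\delta}$; for density, one reruns the inductive construction of Theorem \ref{Theorem 2} for $\ell_{1}$ starting from $S^{*}$, checking that every operator output by Lemma \ref{Lemme 4 refait} is $w^{*}$-continuous (it is a finite-rank perturbation of a weighted shift, conjugated by a finite-rank perturbation of the identity) and that each approximation step is in fact \sote-close on the $c_{0}$ side---the $L$-conjugation is norm-small, and the initial replacement by the shift-like operator is \sote-close by the same inspection invoked in the proof of Theorem \ref{Theorem 2 bis}.
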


The existence of $T\in\bx$ such that both $T$ and $T^{*}$ admit a hypercyclic algebra can also be 
deduced directly from \cite{BCP}*{Corollary 4.11} and \cite{Salas}.

\section{Typicality of Admitting a Hypercyclic Subspace}\label{section6}

Let $X$ be a complex separable infinite-dimensional Banach space. Our aim in this final section is to study whether, given $M>1$, a typical operator $T\in(\bmx,\sot)$ admits a closed infinite-dimensional hypercyclic subspace. When it makes sense, we also consider this question for $T\in(\bmx,\sotstar)$.
\par\smallskip
For a hypercyclic operator $T \in \bx$, a \emph{hypercyclic subspace} is defined as a closed infinite-dimensional subspace $Z \subseteq X$ such that every nonzero vector in $Z$ is hypercyclic for $T$.
Since the early work of Bernal Gonz\'{a}lez and Montes-Rodr\'{\i}guez \cite{BM95}, the investigation of hypercyclic subspaces has amassed a vast literature, as documented in \cite{BayMat}*{Chapter 8} and \cite{GroPer}*{Chapter 10}.
In contrast to the generic property that the set $\hc{T} \cup \{0\}$ always contains a dense linear manifold of hypercyclic vectors, it turns out that there exist hypercyclic operators that do not admit a hypercyclic subspace. 
Examples of operators that support hypercyclic subspaces in the Fr\'echet space setting include the differentiation and translation operators acting on the space $\entireFns$ of entire functions (cf.\ \cite{GroPer}*{Examples 10.12 and 10.13}).  For the Banach spaces $X=\ell^p(\N)$, $1 \leq p < \infty$, or $X=c_0(\N)$, a weighted backward shift $B_w \in \bx$ admits a hypercyclic subspace if 
\begin{equation*}
\sup_{n\geq 1} \, \prod_{j=1}^n \abs{w_{j}} =+ \infty\;\textrm{ and }\;
\sup_{n\geq 1} \, \limsup_{k \to \infty} \prod_{j=1}^n \abs{w_{j+k}} < +\infty,
\end{equation*}
cf.\ \cite{GroPer}*{Example  10.10}.
However, it is well known that scalar multiples $cB$ of the backward shift for $\abs{c}>1$ do not possess hypercyclic subspaces (cf.\ \cite{GroPer}*{Example 10.26}).  
\par\smallskip
The following characterization of operators that satisfy the Hypercyclicity Criterion and admit a hypercyclic subspace was identified by Le{\'o}n-Saavedra and Montes-Rodr{\'{\i}}guez~\cite{LM01} in the Hilbert space setting, and by Gonz{\'a}lez et al.\ in \cite{GLM00} for separable complex Banach spaces.

\begin{theorem}[\cite{GLM00}, \cite{LM01}] \label{theorem:HcSubspChar}
	Let $X$ be a separable complex Banach space. Suppose that $T \in \bx$  satisfies the Hypercyclicity Criterion.  The following assertions are equivalent.
	\begin{enumerate}[\normalfont (i),  itemsep=.5ex]
		\item $T$ possesses a hypercyclic subspace.
		
		\item There exists some closed infinite-dimensional subspace $Z_0 \subseteq X$ and an increasing sequence of integers $(n_k)$ such that $T^{n_k}x \to 0$ for all $x \in Z_0$.
		
		\item There exists some closed infinite-dimensional subspace $Z_0 \subseteq X$ and an increasing sequence of integers $(n_k)$ such that $\sup_k \lVert T^{n_k}_{\vert Z_0} \rVert < \infty$.
		
		\item The essential spectrum $\essSpec{T}$ of $T$ intersects the closed unit disk.
	\end{enumerate}
\end{theorem}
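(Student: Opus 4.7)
The plan is to establish the four-way equivalence by proving the chain (ii)$\Leftrightarrow$(iii), (iii)$\Leftrightarrow$(iv), (iii)+(Hypercyclicity Criterion)$\Rightarrow$(i), and (i)$\Rightarrow$(ii), so as to bridge the dynamical condition in (i) with the operator-theoretic conditions (ii)/(iii) and the spectral condition (iv).

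The implication (ii)$\Rightarrow$(iii) is immediate from Banach-Steinhaus applied to the restricted maps $T^{n_k}\vert_{Z_0}$. For the converse (iii)$\Rightarrow$(ii), I would use the fact that the Hypercyclicity Criterion provides a dense set of vectors $x$ for which some subsequence of $(T^n x)$ converges to $0$; combining this density with the uniform bound on $Z_0$ and a diagonal/perturbation argument, one extracts an infinite-dimensional closed subspace on which the iterates vanish uniformly along $(n_k)$.

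For (iii)$\Leftrightarrow$(iv), I would bring in Fredholm theory. If $\lambda\in\essSpec{T}\cap\overline{\D}$, then $T-\lambda I$ fails to be Fredholm, so by a Weyl-type construction there exists an infinite-dimensional closed subspace $Z_0$ on which $T-\lambda I$ is as small as we like (even compact); since $|\lambda|\le 1$, this produces a sequence $(n_k)$ along which $\sup_k \lVert T^{n_k}_{\vert Z_0}\rVert<\infty$. Conversely, if $\essSpec{T}$ avoids $\overline{\D}$, then for every $|\lambda|\le 1$ the operator $T-\lambda I$ is Fredholm of index zero, and a compactness/index argument rules out uniform boundedness of any subsequence of iterates on an infinite-dimensional closed subspace.

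The core and most demanding step is (iii)+(Hypercyclicity Criterion)$\Rightarrow$(i). Here I would build, by induction, a normalized basic sequence $(f_n)$ living inside (a small perturbation of) $Z_0$ whose closed linear span $Z$ consists entirely of hypercyclic vectors together with $0$. At each stage, using the Hypercyclicity Criterion, I would perturb a Schauder-basic sequence of $Z_0$ by vectors drawn from dense $T$-orbits, arranging that (a) the perturbed family remains basic, (b) every nonzero finite linear combination of the $f_n$'s has dense orbit, and (c) the uniform bound from (iii) controls the tail errors so that (a)-(b) extend from finite combinations to the whole closed span. Finally, (i)$\Rightarrow$(ii) is obtained by choosing any hypercyclic $x$ in a hypercyclic subspace $Z$, picking $(n_k)$ with $T^{n_k}x\to 0$, and applying the same density/perturbation machinery to upgrade this single-vector convergence to an infinite-dimensional subspace. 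The principal obstacle throughout is the construction step (iii)$\Rightarrow$(i), where basicness and hypercyclicity of \emph{every} nonzero linear combination must be maintained simultaneously as the errors from successive perturbations accumulate.
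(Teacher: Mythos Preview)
The paper does not prove this theorem: it is quoted as a known characterization due to Gonz\'alez, Le\'on-Saavedra and Montes-Rodr\'iguez \cite{GLM00} (and \cite{LM01} in the Hilbert space case), and is used only as a black box in the proofs of Theorems \ref{theorem:HcSubspTypicalSOT} and \ref{theorem:HcSubspTypicalStar}. There is therefore nothing in the paper to compare your proposal against.

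That said, your sketch is broadly aligned with the original arguments in \cite{GLM00}: the construction (iii)$\Rightarrow$(i) via a perturbed basic sequence inside $Z_0$ is indeed the heart of the matter, and the Fredholm/index considerations you outline for (iii)$\Leftrightarrow$(iv) are the right ingredients. One point that deserves more care is your argument for (i)$\Rightarrow$(ii). Picking a single hypercyclic vector $x\in Z$ and a sequence $(n_k)$ with $T^{n_k}x\to 0$ does not by itself yield an infinite-dimensional closed subspace on which $T^{n_k}\to 0$; the ``density/perturbation machinery'' you invoke would at best produce a different subspace and possibly a different subsequence, and it is not clear what plays the role of the uniform bound that was essential in the forward direction. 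In the original treatment this implication is obtained instead by the contrapositive of (iv)$\Rightarrow$(i): if $\essSpec{T}\cap\overline{\D}=\emptyset$, one shows directly (via Fredholm theory and the stability of the index) that no hypercyclic subspace can exist, which closes the cycle through (iv) rather than through (ii).
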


We recall that an operator $T \in \bx$ is said to be \emph{Fredholm} if both the dimension of its kernel $\dim \ker(T)$ and the codimension of its range $\codim \ran(T)$ are finite. Equivalently, $T$ is Fredholm if and only if it has closed range and $\dim \ker(T) < \infty$ and $\dim \ker(T^*)< \infty$.
The \emph{essential spectrum}  $\essSpec{T}$ of the operator $T \in \bx$ is defined to be
\begin{equation*}
\essSpec{T} \coloneqq \{ \lambda \in \C : T - \lambda \textrm{ is not Fredholm} \}.
\end{equation*}

\par\smallskip

The following theorem is a straightforward consequence of  results from \cite{GMM1} and \cite{GMM2}.

\begin{theorem} \label{theorem:HcSubspTypicalSOT}
	Let $X = \ell_p(\N)$, $1\le p<+\infty$, or $X = c_0(\N)$. For every $M>1$, a typical $T \in (\Bmx, \emph{\sot})$ admits a hypercyclic subspace.
\end{theorem}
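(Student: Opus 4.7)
The plan is to invoke the characterization given in Theorem \ref{theorem:HcSubspChar} and verify its two input hypotheses typically in $(\Bmx,\sot)$: (A) a typical $T\in(\Bmx,\sot)$ satisfies the Hypercyclicity Criterion, and (B) a typical $T\in(\Bmx,\sot)$ has essential spectrum meeting $\overline{D(0,1)}$. Both being comeager, their intersection is comeager by Baire's theorem, and the implication (iv)$\Rightarrow$(i) of Theorem \ref{theorem:HcSubspChar} then yields a hypercyclic subspace for a typical $T$.

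For (A), I would cite the typicality results of \cite{GMM1}, according to which a typical $T\in(\Bmx,\sot)$ is topologically mixing, and thus in particular satisfies HC. Satisfying HC is a $G_\delta$ condition in $(\Bmx,\sot)$, obtained as a countable intersection of orbit-hitting open sets; density is established in \cite{GMM1} by perturbing any $S\in\Bmx$ with $\|S\|<M$ into an operator that acts as a weighted backward shift of gain $M'<M$ on a suitable tail of the canonical basis, in the same spirit as Lemma \ref{Lemme 4 refait} above.

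For (B), I would appeal to the spectral typicality theorems of \cite{GMM2}, which show that a typical $T\in(\Bmx,\sot)$ has large essential spectrum; in particular $\essSpec{T}$ contains (or at least meets) the closed unit disk $\overline{D(0,1)}$. The $G_\delta$ character of the condition ``$T-\lambda$ is not Fredholm for some $\lambda\in\overline{D(0,1)}$'' follows from encoding non-Fredholmness by countably many SOT-upper-semicontinuous approximate-eigenvector conditions over a dense countable subset of $\overline{D(0,1)}$. The main technical step is the density of such operators in $\Bmx$: this is where the hypothesis $M>1$ enters crucially, since the slack in norm allows one to SOT-approximate any $S\in\Bmx$ with $\|S\|<M$ by an operator possessing an infinite-dimensional kernel (or a pervasive approximate point spectrum throughout $\overline{D(0,1)}$) while remaining inside $\Bmx$.

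Once both (A) and (B) are in hand, the conclusion is essentially automatic: a typical $T$ satisfies both HC and condition (iv) of Theorem \ref{theorem:HcSubspChar}, hence admits a hypercyclic subspace. The only delicate point is the density in (B), but this is precisely one of the main contributions of \cite{GMM2}; thus the result is, as advertised, a straightforward consequence of the combined machinery of \cite{GMM1} and \cite{GMM2}.
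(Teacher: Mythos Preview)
Your overall strategy---invoke Theorem~\ref{theorem:HcSubspChar}, verify (A) that a typical $T$ satisfies the Hypercyclicity Criterion via \cite{GMM1}, and (B) that a typical $T$ has essential spectrum meeting the closed unit disk via \cite{GMM2}---is exactly the paper's approach. Part~(A) is handled as you describe.

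However, your treatment of~(B) contains a genuine gap. You claim that ``$T-\lambda$ is not Fredholm for some $\lambda\in\overline{D(0,1)}$'' is $G_\delta$ in \sot\ because non-Fredholmness can be encoded by approximate-eigenvector conditions. This conflates the approximate point spectrum with the essential spectrum: an operator $T-\lambda$ can have $\lambda$ in its approximate point spectrum while remaining Fredholm (finite-dimensional kernel, closed range), and conversely $T-\lambda$ can fail to be Fredholm for reasons unrelated to approximate eigenvectors of $T$ (e.g.\ infinite-codimensional range). There is no obvious \sot-$G_\delta$ encoding of the set $\{T:\essSpec{T}\cap\overline{D(0,1)}\neq\emptyset\}$, and the paper does \emph{not} proceed this way.

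Instead, the paper shows the stronger fact that a typical $T$ has $\essSpec{T}=\overline{D}(0,M)$, and does so by a dichotomy argument. One first uses \cite{EisMat} (via \cite{GMM2}) to get that a typical $T$ is not Fredholm, i.e.\ $0\in\essSpec{T}$. Then one applies the topological $0$--$1$ law to the conjugation-invariant set $\mathcal{A}=\{T:\forall\lambda\in D(0,M),\ T-\lambda\text{ is surjective}\}$: it is either meager or comeager. If $\mathcal{A}$ is comeager, combining with ``$T$ not Fredholm'' and the continuity of the Fredholm index along $D(0,M)$ forces $T-\lambda$ to be non-Fredholm for all $\lambda\in D(0,M)$. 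If $\mathcal{A}$ is meager, then since a typical $T$ has $T-\lambda$ with dense range for all $\lambda\in D(0,M)$ (\cite{GMM2}), a typical $T$ has $T-\lambda$ with non-closed range for all such $\lambda$, hence again non-Fredholm. Either way, $\essSpec{T}=\overline{D}(0,M)$. You should replace your $G_\delta$ sketch with this argument.
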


\begin{proof}	
An elementary adaptation of the proofs of \cite{GMM1}*{Proposition 2.3} and \cite{GMM1}*{Proposition 2.16}  shows that a typical operator  $T \in (\Bmx, {\sot})$ satisfies the Hypercyclicity Criterion. It thus suffices to show that
 the essential spectrum of a typical  $T \in (\Bmx, {\sot})$ is equal to the closed disk $\overline{D}(0,M)$.
As observed in \cite{GMM2}*{Remark 4.5}, it follows from \cite{EisMat}*{Lemma 5.13} that a typical  $T \in (\Bmx, {\sot})$ is not Fredholm. 
Consider now the set
$$\mathcal{A}\coloneqq\{T\in \Bmx\,;\, \forall\lambda\in D(0,M),\, T-\lambda\textrm{ is surjective}\}.$$
By the topological $0-1$ law \cite{GMM2}*{Proposition 3.2}, $\mathcal{A}$ is either meager or comeager in $(\Bmx, {\sot})$.
\par\smallskip
-- Suppose first that $\mathcal{A}$ is \sot-comeager in $(\Bmx, {\sot})$ (which is known to happen when $X=\ell_1(\N)$ or when $X=\ell_2(\N)$). 
Since a typical  $T \in (\Bmx, {\sot})$ is not Fredholm, the continuity of the Fredholm index on $D(0,M)$ implies that
 for all $\lambda\in D(0,M)$, $T - \lambda$ is not Fredholm (see the argument just before Remark 4.5 in \cite{GMM2}). Hence $\essSpec{T} = \overline{D}(0,M)$.
\par\smallskip
-- Suppose now that $\mathcal{A}$ is \sot-meager in $(\Bmx, {\sot})$ (which is known to happen when $X=\ell_p(\N)$ for $p>2$). As the set of operators $T\in (\Bmx, {\sot})$ such that $T-\lambda$ has dense range for every $\lambda\in D(0,M)$ is comeager in $(\Bmx, {\sot})$ (cf.\ \cite{GMM2}*{Proposition 3.9}), a typical $T \in (\Bmx, {\sot})$ is such that for every $\lambda\in D(0,M)$, $T-\lambda$ does not have closed range. Hence $T - \lambda$ is not Fredholm, and $\essSpec{T} = \overline{D}(0,M)$.
\end{proof}

The following shows that we have a similar statement for $\sotstar$-typical operators.

\begin{theorem} \label{theorem:HcSubspTypicalStar}
Let $X = \ell_p(\N)$, $1< p<+\infty$. For every $M>1$, a typical $T \in (\Bmx, \sotstar)$ admits a hypercyclic subspace.
\end{theorem}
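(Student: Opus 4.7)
The plan is to mirror the proof of Theorem \ref{theorem:HcSubspTypicalSOT}, working now in the Polish space $(\Bmx,\sotstar)$. By Theorem \ref{theorem:HcSubspChar}, it suffices to establish two facts for a typical $T\in(\Bmx,\sotstar)$: (i) $T$ satisfies the Hypercyclicity Criterion, and (ii) $\essSpec{T}$ meets $\overline{D}(0,1)$ -- in fact we will show $\essSpec{T}=\overline{D}(0,M)$. Note that the hypothesis $1<p<+\infty$ is precisely what ensures $X^{*}$ is separable, so that $(\Bmx,\sotstar)$ is indeed a Polish space.

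For (i), I would invoke the $\sotstar$-analogues of \cite{GMM1}*{Proposition 2.3} and \cite{GMM1}*{Proposition 2.16}, which are known in the $\ell_{p}$-setting from \cite{GMM2} and \cite{GMM3}; these give that a typical $T\in(\Bmx,\sotstar)$ is hypercyclic and satisfies the Hypercyclicity Criterion. For (ii), I would first use the $\sotstar$-version of \cite{EisMat}*{Lemma 5.13} (applied via \cite{GMM2}*{Remark 4.5}) to conclude that a typical $T\in(\Bmx,\sotstar)$ is not Fredholm. Then, appealing to the topological $0$-$1$ law \cite{GMM2}*{Proposition 3.2}, whose hypotheses (isometric conjugation invariance) are met by the property under consideration, the set
\[
\mathcal{A}\coloneqq\{T\in\Bmx\,;\,\forall\lambda\in D(0,M),\ T-\lambda\text{ is surjective}\}
\]
is either $\sotstar$-meager or $\sotstar$-comeager.

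If $\mathcal{A}$ is $\sotstar$-comeager, then for a typical $T$ and every $\lambda\in D(0,M)$, $T-\lambda$ is surjective; combining this with the fact that $T$ itself is not Fredholm, the continuity of the Fredholm index on the connected open set $D(0,M)$ forces $T-\lambda$ to be non-Fredholm throughout $D(0,M)$, so $\overline{D}(0,M)\subseteq\essSpec{T}$ and by the inclusion $\essSpec{T}\subseteq\overline{D}(0,M)$ (coming from $\norm{T}\le M$) we get equality. If on the contrary $\mathcal{A}$ is $\sotstar$-meager, I would use the $\sotstar$-analogue of \cite{GMM2}*{Proposition 3.9} (density of range for every $\lambda$ in $D(0,M)$ is still $\sotstar$-comeager, the proof being based on pointwise approximation which adapts to $\sotstar$) together with the failure of surjectivity to conclude that for a typical $T$ and every $\lambda\in D(0,M)$, the range of $T-\lambda$ is dense but proper -- in particular not closed, hence $T-\lambda$ is not Fredholm and again $\essSpec{T}=\overline{D}(0,M)$.

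The main obstacle is verifying that the auxiliary $\sotstar$-typicality results cited above -- Hypercyclicity Criterion, non-Fredholmness via \cite{EisMat}*{Lemma 5.13}, dense range via \cite{GMM2}*{Proposition 3.9}, and the $0$-$1$ law -- have indeed been established in the $\ell_{p}$, $1<p<+\infty$, $\sotstar$ setting. Each of these was originally proved for $\sot$ on $\ell_{p}$ (or $\sotstar$ on Hilbert space), but the arguments extend in a direct fashion once one verifies the separability of $X^{*}$ (giving a Polish structure) and the $\sotstar$-continuity of the relevant maps $T\mapsto T^{k}$; this is done in \cite{GMM2} and \cite{GMM3}. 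Once this bookkeeping is in place, the proof closes exactly as in Theorem \ref{theorem:HcSubspTypicalSOT}.
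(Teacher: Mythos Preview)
Your proposal follows the right overall structure---reduce to Theorem \ref{theorem:HcSubspChar} by checking the Hypercyclicity Criterion and controlling the essential spectrum---but it is considerably more elaborate than the paper's own argument. The paper does not adapt the \sot\ dichotomy argument at all: for part (ii) it simply invokes \cite{GriMat}*{Theorem 3.1}, which already states that an \sotstar-typical $T\in\Bmx$ satisfies $\essSpec{T}=\overline{D}(0,M)$. This single citation replaces your entire discussion of the $0$--$1$ law, the set $\mathcal{A}$, and the bookkeeping about \sotstar-analogues of \cite{EisMat}*{Lemma 5.13} and \cite{GMM2}*{Proposition 3.9}.

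What the paper's route buys is economy and certainty: you yourself flag as the ``main obstacle'' that the auxiliary \sotstar-typicality results you need have to be checked in the $\ell_p$ setting, and you leave this as a claim that ``the arguments extend in a direct fashion''. That is precisely the work that \cite{GriMat}*{Theorem 3.1} has already packaged. Your approach is not incorrect in spirit, but it re-proves a theorem that is available off the shelf, and in doing so it leaves a genuine verification gap.
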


\begin{proof}
This is a direct consequence of the fact that a typical operator  $T \in (\Bmx, {\sotstar})$ satisfies the Hypercyclicity Criterion, and of \cite{GriMat}*{Theorem 3.1}, which states that a typical $T \in (\Bmx, {\sotstar})$ is such that $\essSpec{T} = \overline{D}(0,M)$.	
\end{proof}

We can also deduce from Theorem \ref{theorem:HcSubspTypicalStar} the following result.

\begin{proposition}
	Let $X = \ell_p(\N)$, $1< p<+\infty$. For every $M>1$, a typical operator $T \in (\Bmx, \sotstar)$ is such that $T$ and its adjoint $T^*$ both admit hypercyclic subspaces.
\end{proposition}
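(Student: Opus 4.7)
The plan is to exploit the built-in symmetry of the \sotstar\ topology between an operator and its adjoint, together with the reflexivity of $X$. Since $X = \ell_p(\N)$ is reflexive for $1 < p < +\infty$, its dual $X^*$ can be canonically identified with $\ell_q(\N)$, where $1/p + 1/q = 1$, and $X^{**}$ with $X$. Under these identifications, the transposition map $\Phi \colon T \mapsto T^*$ sends $\Bmx$ bijectively onto $\mathcal{B}_M(X^*)$ (recall that $\norm{T^*} = \norm{T}$), with inverse $S \mapsto S^*$.

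The first step would be to verify that $\Phi$ is a homeomorphism from $(\Bmx, \sotstar)$ onto $(\mathcal{B}_M(X^*), \sotstar)$. By definition, $T_\alpha \to T$ in $(\Bmx, \sotstar)$ means $T_\alpha \to T$ in SOT on $X$ and $T_\alpha^* \to T^*$ in SOT on $X^*$. Under the reflexive identification of $X^{**}$ with $X$, these are exactly the two conditions defining \sotstar-convergence of $\Phi(T_\alpha) = T_\alpha^*$ to $\Phi(T) = T^*$ in $\mathcal{B}_M(X^*)$. By symmetry, $\Phi^{-1}$ is continuous as well.

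Next I would invoke Theorem \ref{theorem:HcSubspTypicalStar} twice. Applied to $X$ itself, it gives that the set $\mathcal{G} = \{ T \in \Bmx \ ; \ T \textrm{ admits a hypercyclic subspace}\}$ is comeager in $(\Bmx, \sotstar)$. Applied to $X^* = \ell_q(\N)$, which is itself an $\ell_p$-space with $1 < q < +\infty$, it gives that $\mathcal{H} = \{ S \in \mathcal{B}_M(X^*) \ ; \ S \textrm{ admits a hypercyclic subspace}\}$ is comeager in $(\mathcal{B}_M(X^*), \sotstar)$. Since $\Phi$ is a homeomorphism, the preimage $\Phi^{-1}(\mathcal{H}) = \{T \in \Bmx \ ; \ T^* \textrm{ admits a hypercyclic subspace}\}$ is likewise comeager in $(\Bmx, \sotstar)$, and the intersection $\mathcal{G} \cap \Phi^{-1}(\mathcal{H})$ is therefore a comeager subset of $(\Bmx, \sotstar)$ every element of which satisfies the desired property.

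No serious obstacle is expected: the argument is essentially a soft duality transfer. The only point requiring actual verification is that $\Phi$ is a homeomorphism, which rests squarely on the reflexivity of $X$ -- this is also the reason why the statement is restricted to $1 < p < +\infty$, matching exactly the hypotheses under which Theorem \ref{theorem:HcSubspTypicalStar} has been established so that it applies both to $X$ and to $X^*$.
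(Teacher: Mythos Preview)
Your proposal is correct and matches exactly the duality transfer that the paper has in mind: the paper states the proposition immediately after Theorem \ref{theorem:HcSubspTypicalStar} with only the remark that it ``can also be deduced'' from that theorem, and the intended deduction is precisely the one you spell out---apply Theorem \ref{theorem:HcSubspTypicalStar} to both $X=\ell_p(\N)$ and $X^*=\ell_q(\N)$, then pull back via the \sotstar-homeomorphism $T\mapsto T^*$ and intersect the two comeager sets. Your verification that $\Phi$ is an \sotstar-homeomorphism using reflexivity is the only nontrivial point, and it is handled correctly.
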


\bibliographystyle{siam}
\bibliography{bibliographie}
\end{document}